\documentclass[a4paper,10pt]{amsart}
\usepackage[colorlinks,linkcolor=blue,citecolor=blue]{hyperref}
\usepackage{amssymb,stmaryrd}
\usepackage{amsfonts}
\usepackage{amstext}
\usepackage{algorithmic}
\usepackage{algorithm}
\usepackage{young}
\usepackage{graphicx}
\usepackage{epstopdf}
\usepackage[all]{xy}
\usepackage{enumerate}
\usepackage{color}
\usepackage{epic}

\usepackage{tikz}
\usetikzlibrary{matrix,arrows,decorations.pathmorphing}


\parindent 0cm
\parskip 6pt plus 1pt minus1pt
\arraycolsep 1pt
\textwidth 13.7cm \textheight 22.9cm

\numberwithin{equation}{section}
\newtheorem{theorem}{Theorem}[section]
\newtheorem{lemma}[theorem]{Lemma}
\newtheorem{proposition}[theorem]{Proposition}
\newtheorem{corollary}[theorem]{Corollary}

\theoremstyle{definition}
\newtheorem{definition}[theorem]{Definition}
\newtheorem{example}[theorem]{Example}

\theoremstyle{remark}
\newtheorem{remark}[theorem]{\bf{Remark}}

\newcommand{\C}{{\mathbb{C}}}
\newcommand{\D}{{\Delta}}
\newcommand{\e}{{\epsilon}}
\newcommand{\Z}{{\mathbb{Z}}}

\newcommand{\<}{{\langle}}
\renewcommand{\>}{{\rangle}}

\newcommand{\CO}{{\mathcal{O}}}
\newcommand{\CC}{{\mathcal{C}}}

\newcommand{\isom}{{\cong}}
\newcommand{\Ad}{{\rm Ad}}
\newcommand{\End}{{\rm End}}
\newcommand{\Sh}{{\rm Sh}}

\renewcommand{\ker}{{\rm{ker}}}

\newcommand{\ver}{{\rm{ver}}}
\newcommand{\im}{{\rm{Im}}}
\newcommand{\kk}{{k}}

\newcommand{\tens}{\otimes}
\newcommand{\id}{{\rm id}}
\newcommand{\bo}{{}^{(1)}}
\newcommand{\bt}{{}^{(2)}}
\renewcommand{\o}{{}_{(1)}}
\renewcommand{\t}{{}_{(2)}}
\renewcommand{\th}{{}_{(3)}}
\newcommand{\four}{{}_{(4)}}
\newcommand{\z}{{}_{(0)}}
\newcommand{\mo}{{}_{(-1)}}
\newcommand{\mt}{{}_{(-2)}}
\newcommand{\mth}{{}_{(-3)}}
\newcommand{\extd}{{\rm d}}
\newcommand{\del}{{\partial}}
\newcommand{\eps}{\epsilon}

\newcommand{\la}{{\triangleright}}
\newcommand{\ra}{{\triangleleft}}

\newcommand{\lbiprod}{{>\!\!\!\triangleleft\kern-.33em\cdot}}
\newcommand{\rbiprod}{{\cdot\kern-.33em\triangleright\!\!\!<}}

\newcommand{\rcross}{{\triangleright\!\!\!<}}

\begin{document}

\title[Generalised noncommutative geometry on quivers]{Generalised noncommutative geometry on finite groups and Hopf quivers}
\keywords{noncommutative geometry, differential calculus, finite groups, Hopf algebra, bicovariant, quiver, Hopf quiver, duality}

\keywords{noncommutative geometry, quantum group, Hopf algebra, differential calculus, nonsurjective, bicovariant, quiver, Hopf quiver, Riemannian geometry, duality, finite group}
\subjclass[2010]{Primary 81R50, 58B32, 20D05}

\author[S. Majid]{Shahn Majid}
\address{Queen Mary University of London\\
School of Mathematical Sciences, Mile End Rd, London E1 4NS, UK}
\email{s.majid@qmul.ac.uk}
\author[W.-Q. Tao]{Wenqing Tao\textsuperscript{$\dagger$}}
\thanks{\textsuperscript{$\dagger$}The second author was supported by the China Scholarship Council and funded by 2016YXMS006 and NSFC11601167.}
\address{Huazhong University of Science and Technology\\
School of Mathematics and Statistics, Wuhan, Hubei 430074, P.R. China}
\email{wqtao@hust.edu.cn}

\date{}

\begin{abstract}
We explore the differential geometry of finite sets where the differential structure is given by a quiver
rather than as more usual by a graph. In the finite group case we show that the data for such a differential calculus is described by certain Hopf quiver data as familiar in the context of path algebras. We explore a duality between geometry on the function algebra vs geometry on the group algebra, i.e. on the dual Hopf algebra, illustrated by the noncommutative Riemannian geometry of the group algebra of $S_3$. We show how quiver geometries arise naturally in the context of quantum principal bundles. We provide a formulation 
of bimodule Riemannian geometry for quantum metrics on a quiver, with a fully worked example on 2 points; in the quiver case, metric data assigns matrices not real numbers to the edges of a graph. The paper builds on the general theory in our previous work\cite{MaTao1}. 
\end{abstract}

\maketitle


\section{Introduction}

Noncommutative differential geometry is an extension of geometry to the case where the coordinate algebra $A$ may be noncommutative or `quantum'. The starting point is a `differential structure' on $A$ defined as a pair $(\Omega^1,\extd)$ where the space of `1-forms' $\Omega^1$ is an $A$-$A$-bimodule (so one can multiply by the algebra from the left or the right) and $\extd:A\to \Omega^1$ obeys the Leibniz rule. One usually requires that the map $\phi:A\tens A\to \Omega^1$ given by $\phi(a\tens b)=a\extd b$, $a,b\in A$ is surjective. Recently in \cite{MaTao1} we began a study of differentials in noncommutative geometry where this condition is dropped. One still has a standard differential calculus $\bar\Omega^1=\phi(A\tens A)$ given by the image of this map, but there are many situations where this image is not easy to describe and where a larger $\Omega^1$ is the much more natural object. This also links in to the wider use of differential graded algebras in other contexts where such surjectivity need not be assumed. As for usual calculi, $(\Omega^1,\extd)$ is inner if the derivative $\extd$ can be written as a commutator
\[\extd a=[\theta, a],\quad\forall\, a\in A,\] 
for some $\theta\in\Omega^1$, but this is now more general as we do not require $\theta\in\bar\Omega^1$. Indeed, it is observed in~\cite{MaTao1} that any generalised differential calculus can be embedded into an inner one, thus we will be particularly interested in inner calculi.

We begin by briefly recalling some results from \cite{MaTao1} that we will be specialising to the finite case, in the Preliminaries section. The main difference is that whereas a bicovariant calculus is usually, following \cite{Wor}, thought about in terms of Ad-stable right ideals in the augmentation ideal $A^+$ (the kernel of the counit, or in classical geometry the functions that vanish at the identity), in the generalised case what we are interested in is just  all crossed-module morphisms $A^+\to \Lambda^1$ from a given fixed crossed module (or Drinfeld-Radford-Yetter module) structure on $A^+$. Another key idea is to augment the notion of a bicovariant calculus by a degree $-1$ map $i$ going the other way. This augmented notion is then self-dual in the finite-dimensional case, i.e. dualisation gives us the same type of data but on the dual Hopf algebra.

Section~3.1 specialises the theory of \cite{MaTao1} to the case of $A=k(G)$, the functions on a finite group and dually the codifferentials on a group algebra $A=kG$. The former generalises the well-known case of ordinary noncommutative bicovariant differentials given by ad-stable subsets, now given by Hopf quivers, and connects with the quiver path (co)algebra in the sense of \cite{Rosso,Hua}. By contrast, Section~3.2 covers the dual notion of bicovariant differentials on $kG$ and codifferentials on $k(G)$, which turn out  to correspond simply to group 1-cocycles $\zeta\in Z^1(G,\Lambda^1{}^*)$. Such cocycles are principally obtained from characters or matrix elements of irreducible representations of the group and, remarkably, again turn out to be characterised by certain Hopf quiver data. 

Section~4 turns to some elements of noncommutative differential geometry outlined in general terms in Section~4.1. In Section~4.2, working with ordinary differential structures, we explore our duality notions in detail on the group $S_3$ of permutations of 3 elements. Calculi on its group algebra  are given by irreducible representations while eigenvectors for an associated Laplacian are given by ad-stable subsets, the exact reverse of the situation in finite group function algebras (where calculi are given by ad-stable subsets as mentioned above while eigenfunctions of the laplacian are given by matrix elements of irreducible representations). We also find a natural quantum metric $g\in\Omega^1\tens_A\Omega^1$ for $A=\C S_3$ with its 4D calculus associated to the 2D irreducible of $S_3$ and we find its moduli of bimodule torsion free and cotorsion free or `weak quantum Levi-Civita' connections to complement what is known on $\C(S_3)$ in  \cite{Ma:rieq}. In Section~4.3 we show how quiver geometries arise  naturally in the context of quantum principal bundles as invariants $\Omega(X)^G$ of standard (graph) calculi on $X$ and give a geometric description of the quiver under certain conditions. Finally, in Section~4.4 we turn to quiver noncommutative geometry itself in the form of noncommutative Riemannian geometry with generalised differentials, including two  fully worked examples. The second of these is a quiver calculus on $\C(\Z_2)$ and  we are able to find a full 4-functional parameter moduli of quantum Levi-Civita connections for a given quiver quantum metric given by a matrix at each of the two points. 

\section{Preliminaries}

We work over a field $k$ of characterisitic not 2. Here we briefly recall and slightly improve some basic results in \cite{MaTao1} that will be needed. We also then recall some background on Hopf quivers. 

\subsection{Generalised differential structures} 

Throughout the paper, unless otherwise specified, a differential calculus will normally mean a generalised one in the sense of a pair $(\Omega^1,\extd)$ where $\Omega^1$ is a bimodule and $\extd (ab)=a\extd b+(\extd a)b$ for all $a,b\in A$. If the surjectivity of the map $\phi$ holds as discussed above, we speak of a {\em standard} calculus and in particular of the standard subcalculus $\bar\Omega^1\subseteq\Omega^1$ associated to any possibly generalised one.  Two first order differential (generalised) calculi $(\Omega^1,\extd)$ and $(\Omega'{}^1,\extd')$ on $A$ are said to be isomorphic if there exists a bimodule isomorphism $\varphi:\Omega^1\to\Omega'{}^1$ such that $\varphi(\extd a)=\extd' a$ for any $a\in A.$ A calculus is {\em connected} if $\ker\extd=k.1$. 

We recall that a Hopf algebra is an algebra $A$ equipped with a coalgebra structure $(\Delta,\eps)$ where $\Delta:A\to A\tens A$, $\eps:A\to k$  are algebra maps and where we are further provided with a `linearised inverse' or antipode $S$ obeying $(S a\o)a\t=a\o S a\t=1\eps(a)$ where we use the Sweedler notation $\Delta a=a\o\tens a\t$ (summation of such terms understood).  We recall that an $A$-crossed module is a vector space $V$ which is both a module and a comodule and these are compatible in the sense (here we work in the right-handed theory)
\[ \Delta_R(v\ra a)=v\z\ra a\t \tens (Sa\o)v\o a\th,\quad\forall a\in A,\ v\in V,\]
where $\Delta_Rv=v\z\tens v\o$ is a notation (summation understood) for the coaction. A coaction is like an action but with all arrows reversed.  We let $A^+$ denote the augmentation ideal, defined as the kernel of the counit. This forms an $A$-crossed module with
\begin{equation}\label{Aplus1} 
a\ra b=ab,\quad \Delta_R(a)=a\t\tens Sa\o a\th ,\quad\forall a\in A^+,\ b\in A
\end{equation}
i.e., the right regular action and adjoint coaction respectively. We let $\pi:A\to A^+$,  $\pi (a)=a-\eps(a)$ be the counit projection.

In the Hopf algebra case we say that a calculus $\Omega^1$ is \textit{right-covariant} if it is equipped with a right coaction such that $\extd:A\to \Omega^1$ intertwines this with the right regular coaction of $A$ on itself given by the coproduct $\Delta$ and where $\Delta_R$ is a bimodule map. $\Omega^1$ is {\em bicovariant} if this holds and if it is similarly \textit{left-covariant} under a coaction $\Delta_L$, and the two coactions commute. It is shown in \cite[Theorem 2.5]{MaTao1} that bicovariant generalised differential calculi on a Hopf algebra $A$ are isomorphic to ones of the form 
\begin{equation}\label{fdiff} \Omega^1=A\tens\Lambda^1,\quad \extd a=a\o\tens\omega\circ\pi (a\t),\quad \omega:A^+\to \Lambda^1\end{equation}
 given by any data $(\Lambda^1,\omega)$ where $\Lambda^1$ is a right $A$-crossed module and $\omega$ is a morphism in the category of right crossed modules. Left covariant ones correspond to $\Lambda^1$ merely a right $A$-module and $\omega:A^+\to \Lambda^1$  a right $A$-module map. The image $\bar\Lambda^1=\omega(A^+)$ and $\omega$ give the standard sub-calculus. Also, it is easy to see that a left covariant (generalised) calculus is inner if and only if here exists $\theta\in \Lambda^1$ such that $\omega(a)=\theta\ra\,a$ for any $a\in A^+$. This inner calculus is bicovariant if and only if we have $\Delta_R$ making $\Lambda^1$ a crossed module with 
\[ \theta\z\ra a\t\tens Sa\o \theta\o a\th-\theta\ra a\t\tens Sa\o a\th= \eps(a)(\Delta_R\theta-\theta\tens 1),\quad\forall a\in A.\]

Next we recall the notion of a {\em differential graded algebra} over an algebra $A$, meaning a graded algebra $\Omega=\oplus_{n\ge 0}\Omega^n$ with $\Omega^0=A$ equipped with  map $\extd:\Omega\to \Omega$ obeying the super-Leibniz rule $\extd(uv)=(\extd u)v+(-1)^nu\extd v$ for all $u\in\Omega^n,v\in\Omega$ and such that $\extd^2=0$. The differential graded algebra is \textit{inner} if $\extd=[\theta, \ \}$ for some $\theta\in\Omega^1$, where $[\theta,u\}=\theta u-(-1)^n u\theta$ for any $u\in\Omega^n$. Clearly the degree 1 component is a (generalised) differential calculus as above. The standard case is with $\Omega^1$ standard (the surjectivity assumption) and $\Omega$ algebraically generated by $A,\Omega^1$. It is a crucial question whether a given (inner) first order calculus extends to higher orders. When $A$ is a Hopf algebra, notions of left, right and bicovariance extend in the obvious way: we require graded coactions $\Delta_L,\Delta_R$ now on all degrees with analogous properties. We slightly extend a result in \cite{MaTao1}:

\begin{proposition}
Any left covariant (or bicovariant) differential graded algebra $(\Omega,\extd)$ on a Hopf algebra $A$ can be embedded into an inner left covariant (or bicovariant) differential graded algebra $(\widehat\Omega,\widehat\extd)$ such that $\widehat\extd|_{\Omega}=\extd.$ If $(\Omega,\extd)$ is standard, then $\Omega=\bar{\widehat\Omega},$ the standard sub-calculus of the extended calculus.
\end{proposition}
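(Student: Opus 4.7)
The plan is to adjoin a coinvariant degree-one generator $\theta$ to $\Omega$, subject only to $\theta^2=0$ and the requirement that the graded commutator $[\theta,-\}$ agree with $\extd$ on $\Omega$; then to verify that the resulting DGA is inner, covariant, and has standard sub-DGA equal to $\Omega$ when $\Omega$ is standard. Concretely, I set $\widehat\Omega^0=A$ and $\widehat\Omega^n=\Omega^n\oplus\Omega^{n-1}\theta$ for $n\ge 1$, where $\Omega^{n-1}\theta\cong\Omega^{n-1}$ as a vector space. The multiplication is forced by retaining the product on $\Omega$, declaring $\theta^2=0$, and imposing $\theta u=(-1)^n u\theta+\extd u$ for $u\in\Omega^n$; this gives the closed formula
\[(u_1+v_1\theta)(u_2+v_2\theta)=(u_1u_2+v_1\extd u_2)+\bigl(u_1v_2+(-1)^{n_2}v_1u_2+v_1\extd v_2\bigr)\theta\]
for $u_i\in\Omega^{n_i}$, $v_i\in\Omega^{n_i-1}$. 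The differential is $\widehat\extd(u+v\theta)=\extd u+(\extd v)\theta$, which by construction equals $[\theta,-\}$ and restricts to $\extd$ on $\Omega$, since for $a\in A$ we have $\widehat\extd a=\theta a-a\theta=\extd a$.

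First I would check that $\widehat\Omega$ is a DGA: associativity is a direct (if tedious) computation on triples that uses the Leibniz rule and $\extd^2=0$ in $\Omega$, while $\widehat\extd^2=0$ and the super-Leibniz rule are then formal consequences of $\widehat\extd=[\theta,-\}$ together with $\theta^2=0$. Innerness is built in. Next I would extend the coactions by declaring $\theta$ bi-coinvariant, i.e.\ $\Delta_L\theta=1\tens\theta$ and $\Delta_R\theta=\theta\tens 1$, with $\Delta_L(u+v\theta)=\Delta_L u+(\Delta_L v)(1\tens\theta)$ and the analogous rule for $\Delta_R$. Compatibility with the product reduces to the single identity $\Delta_L(\theta u)=(1\tens\theta)\Delta_L u$, which holds precisely because $\Delta_L$ already intertwines $\extd$ on $\Omega$; the check for $\Delta_R$ is identical, and the two coactions still commute since $\theta$ is bi-coinvariant. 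The purely left-covariant case simply drops $\Delta_R$ throughout.

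Finally, for the standard statement: if $\Omega$ is generated as a DGA by $A$ together with $\extd A$, then the sub-DGA of $\widehat\Omega$ generated by $A$ and $\widehat\extd A=\extd A\subseteq\Omega^1\subseteq\widehat\Omega^1$ both contains these generators and is contained in $\Omega$ (since $\Omega$ is already closed under its own product and under $\widehat\extd|_\Omega=\extd$), hence equals $\Omega$; thus $\Omega=\bar{\widehat\Omega}$. The main obstacle is associativity of the product in $\widehat\Omega$: every other verification is formal once associativity is established, and associativity itself is a sign-and-degree bookkeeping exercise combined with a single application of Leibniz for $\extd$, with no conceptual difficulty beyond that.
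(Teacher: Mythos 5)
Your construction is correct and produces the same object as the paper's, but you reach it by a genuinely different route. The paper first invokes the equivalence of a left covariant DGA with invariant data $(\Lambda,\omega,\delta)$, so that $\Omega\cong A\rcross\Lambda$, and then adjoins $\theta$ to the invariant part, setting $\widehat\Lambda=\Lambda\tens 1\oplus\Lambda\tens\theta$ with $(\xi\tens\theta)\wedge\eta=\xi\delta\eta+(-1)^{|\eta|}\xi\eta\tens\theta$ (i.e.\ the same relation $\theta\eta=(-1)^{|\eta|}\eta\theta+\delta\eta$ you impose) and an explicit right action $(\eta\tens\theta)\ra a=(\eta\ra a\o)\omega\pi(a\t)+(\eta\ra a)\tens\theta$; the module-algebra property is then checked against the structure equations for $(\Lambda,\omega,\delta)$, and covariance of $\widehat\Omega=A\rcross\widehat\Lambda$ is delivered by the general bosonisation results cited from \cite{MaTao1}. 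You instead work directly on $\Omega$, setting $\widehat\Omega^n=\Omega^n\oplus\Omega^{n-1}\theta$ with $\theta^2=0$ and $\theta u=(-1)^{|u|}u\theta+\extd u$, and verify associativity, innerness and the extension of the coactions by hand. What your route buys is generality and transparency: the algebra $\widehat\Omega$ with $\widehat\extd=[\theta,\ \}$ exists for an arbitrary DGA over an arbitrary algebra, with covariance entering only at the final step (where, as you correctly observe, multiplicativity of the extended coactions reduces to $\Delta_{L}$, $\Delta_R$ intertwining $\extd$); what the paper's route buys is that the verifications are absorbed into already-established structure theory, and the bicovariant case follows immediately from the quoted criterion $[\Delta_R\theta-\theta\tens 1,\Delta_R\}=0$ for $\theta$ right-invariant. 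Your treatment of the standard case and of $\bar{\widehat\Omega}=\Omega$ matches the paper's. The one point to make explicit in a full write-up is the associativity computation you defer, but your identification of where Leibniz and $\extd^2=0$ enter is exactly right, so there is no gap.
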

\proof
From \cite[Proposition 3.3]{MaTao1}, we know such $(\Omega,\extd)$ corresponds to $(\Lambda,\omega,\delta)$ where $\Lambda$ is a graded right $A$-module algebra with $\Lambda^0=k,$ $\omega:A^+\to\Lambda^1$ is a right $A$-module map and $\delta:\Lambda\to\Lambda$ is degree $1$ super-derivation ($\delta(\xi\eta)=(\delta\xi)\eta+(-1)^{|\xi|}\xi(\delta\eta)$ and $\delta^2=0$) such that 
\begin{gather}
(\delta\eta)\ra a-\delta(\eta\ra a)=\omega\pi(a\o)(\eta\ra a\t)-(-1)^{|\eta|}(\eta\ra a\o)\omega\pi(a\t),\label{deltaraa}\\
\delta(\omega\pi(a))+\omega\pi(a\o)\omega\pi(a\t)=0,\label{deltaomega}
\end{gather}
for all $\xi,\eta\in\Lambda,a\in A.$ Without loss of generality, we can assume $\Omega=A\rcross\Lambda$ with differential $\extd(a\tens\eta)=a\o\tens\omega\pi(a\t)\eta+a\tens\delta\eta.$

Now let $\widehat\Lambda=\Lambda\tens\Lambda(k\theta)=\Lambda\tens 1\oplus\Lambda\tens\theta$ with product $\wedge:\widehat\Lambda\tens\widehat\Lambda\to\widehat\Lambda$ given by
\begin{gather*}
(\xi\tens\theta)\wedge\eta=\xi\delta\eta+(-1)^{|\eta|}\xi\eta\tens\theta,\\
\xi\wedge(\eta\tens\theta)=\xi\eta\tens\theta,\\
(\xi\tens\theta)\wedge(\eta\tens\theta)=\xi\delta\eta\tens\theta,
\end{gather*}
for all $\xi,\eta\in\Lambda.$ Clearly, $\widehat\Lambda$ is graded with $\deg \theta=1,$ and hence $\widehat\Lambda^0=k,$ $\widehat\Lambda^1=\Lambda^1\tens 1\oplus 1\tens k\theta,$ and $\widehat\Lambda^n=\Lambda^n\tens 1\oplus\Lambda^{n-1}\tens\theta$ for $n\ge 2$. Then we define
\begin{equation*}
(\eta\tens\theta)\ra a=(\eta\ra a\o)\omega\pi(a\t)+(\eta\ra a)\tens\theta.\quad\forall\,a\in A.
\end{equation*}
We are making this construction so that $(\widehat\Lambda,\wedge)$ is a graded right $A$-module algebra. One can easily see it is true by checking $(\theta\wedge\eta)\ra a=(\theta\ra a\o)\wedge(\eta\ra a\t).$ The left-hand side is 
\begin{align*}
(\theta\wedge\eta)\ra a&=(\delta\eta)\ra a+(-1)^{|\eta|}(\eta\tens\theta)\ra a,\\
&=(\delta\eta)\ra a+(-1)^{|\eta|}(\eta\ra a\o)\omega\pi(a\t)+(-1)^{|\eta|}(\eta\ra a)\tens\theta),
\end{align*}
while the right-hand side is 
\begin{align*}
(\theta\ra a\o)\wedge(\eta\ra a\t)&=(\omega\pi(a\o)+\epsilon(a\o)\theta)\wedge(\eta\ra a\t)\\
&=\omega\pi(a\o)(\eta\ra a\t)+\theta\wedge(\eta\ra a)\\
&=\omega\pi(a\o)(\eta\ra a\t)+\delta(\eta\ra a)+(-1)^{|\eta|}(\eta\ra a)\tens\theta.
\end{align*}
Two sides agree with each other by (\ref{deltaraa}). Also $(\theta\ra a\o)\wedge(\theta\ra a\t)=0=(\theta\wedge\theta)\ra a$ follows from (\ref{deltaomega}). Obviously $\iota:\Lambda\to\widehat{\Lambda}$ that maps $\eta$ to $\eta\tens 1$ is an injective algebra map that respects grading and commutes with right $A$-action.

Consider maps $\widehat{\omega}:A^+\to\widehat{\Lambda}^1$ and $\widehat{\delta}:\widehat\Lambda\to \widehat\Lambda$ that are defined by 
\[\widehat\omega(a)=\omega(a),\quad\widehat\delta\eta=\delta\eta,\quad\widehat{\delta}(\eta\tens\theta)=\delta\eta\tens\theta,\quad\forall\,a\in A^+,\eta\in\Lambda.\]
Clearly $\widehat\omega$ is a right $A$-module map and 
$\widehat{\extd}a=a\o\tens\widehat\omega\pi(a\t)=a\o\tens\theta\ra a\t-a\theta=[\theta,a]$ while for all $\eta\in \Lambda$, 
\begin{gather*} \widehat\delta\eta=\delta\eta=\theta\wedge\eta-(-1)^{|\eta|}\eta\tens\theta=[\theta,\eta\},\\
\widehat\delta(\eta\tens\theta)=\delta\eta\tens\theta=\theta\wedge(\eta\tens\theta)+(-1)^{|\eta|}(\eta\tens\theta)\wedge\theta=[\theta,\eta\tens\theta\}.
\end{gather*}
So we know from \cite[Proposition 3.5]{MaTao1} that $(\widehat\Lambda,\widehat\omega, \widehat\delta=[\theta,\ \})$ defines an inner left covariant differential graded algebra  $(\widehat{\Omega}=A\rcross\widehat\Lambda,\widehat\extd)$ as $\theta\wedge\theta=0$ in $\widehat\Lambda.$ In particular, $\widehat{\extd}(a\tens\eta)=a\o\tens\widehat\omega\pi(a\t)\wedge\eta+a\tens\widehat\delta\eta
=a\o\tens\omega\pi(a\t)\eta+a\tens\delta\eta=\extd(a\tens\eta)$ shows that the embedding $\iota:\Omega\to\widehat{\Omega}$ (extending $\iota:\Lambda\to\widehat{\Lambda}$) commutes with
differential structures and thus becomes a morphism of left covariant differential graded algebra. In the bicovariant case, one can simply take the right $A$-action on $\theta$ to be trivial and check $\widehat\Lambda$ is also a right $A$-comodule algebra. Again, from \cite[Proposition 3.5]{MaTao1}, the corresponding $(\widehat\Omega,\widehat\extd)$ is bicovariant as the only additional condition $[\Delta_R\theta-\theta\tens 1,\Delta_R\}=0$ holds automatically.
\endproof

It is known that standard first order bicovariant differential calculi have a `minimal' extension to a bicovariant differential exterior algebra, due to Woronowicz\cite{Wor}, and which is known\cite{Brz} to be a super-Hopf algebra.  This motivates the following definition for generalised calculi:

\begin{definition}[\cite{MaTao1}]\label{sdga} We say that a differential graded algebra $(\Omega,\extd)$ over a Hopf algebra $A=\Omega^0$ is {\em strongly bicovariant} if $\Omega$ is a $\mathrm{N}_0$-graded super-Hopf algebra with odd/even part given by the parity of the grading and the super-derivation $\extd$ is also a `\textit{super-coderivation}' in the sense that
\begin{equation*}
\Delta\circ\extd(w)=(\extd\otimes\mathrm{id}+(-1)^{|\ |}\otimes\extd)\Delta(w),\quad \forall w\in\Omega,
\end{equation*} where $\Delta=(\ )\o\otimes(\ )\t$ is the graded-super coproduct of $\Omega$ and $(-1)^{|\ |}w=(-1)^{|w|}w$ according to the degree.
\end{definition}
By definition the coproduct respects the grading so that $\Delta(\Omega^n)\subseteq\oplus_{i,j=0}^{i+j=n}\Omega^i\otimes \Omega^j.$ The super-coderivation condition in Definition~\ref{sdga} is a new observation even in the standard case and is key to what follows.  Our terminology is justified by the fact that any strongly bicovariant differential exterior algebra is bicovariant~\cite{MaTao1}. Given data $(\Lambda^1,\omega)$ (or $(\Lambda^1,\theta)$) on $A$, we recall in particular three constructions of strongly bicovariant differential graded algebras on $A$ in~\cite[Props. 3.10, 3.11 and 3.13]{MaTao1}:
\begin{itemize}
\item[d1)] The `universal inner' calculus $\Omega_\theta(A)=A\rbiprod\Lambda_{\theta}(\Lambda^1)$ with $\extd=[\theta,\ \},$ where $\Lambda_\theta(\Lambda^1)=T_-\Lambda^1/\<\theta^2\ra a, [\theta^2,\eta]\ |\ a\in A^+,\ \eta\in \Lambda^1\>$ and $\Delta_R\theta=\theta\tens1.$ Any inner strongly bicovariant differential graded algebra on $A$ with $\theta$ right invariant that is generated by its degree $0,1$ is a quotient of such $\Omega_\theta(A)$;
\item[d2)] The `braided-exterior calculus' $\Omega_w(A)=A\rbiprod B_-(\Lambda^1)$, $\extd=[\theta,\ \}$ provided\begin{equation*}\label{wor-con}
\Delta_R\theta-\theta\tens 1\in \Lambda^1\square A,\quad \Psi(\eta\tens\theta)=\theta\tens\eta, \quad\{\Delta_R-\theta\tens1,\Delta_R\eta\}=0,
\end{equation*}
for all $\eta\in\Lambda^1.$ Here $\Lambda^1\square A=\{\sum\eta_i\tens a_i\in\Lambda^1\tens A\,|\,\sum\eta_i\ra b\tens a_i=\sum\eta_i\tens b\o a_i Sb\t,\forall\,b\in A\}$ and $\Psi$ is the crossed module (pre)braiding on $\Lambda^1$ defined by $\Psi(\xi\tens\eta)=\eta\z\tens\xi\ra\eta\o$ and  $B_-(\Lambda^1)$ is the braided exterior algebra of $\Lambda^1$ associated to this (with relations given by braided-antisymmetrization). This extends the Woronowicz construction to the generalised case using the braided-Hopf algebra point of view introduced for this in
\cite{Ma:dcalc};
\item[d3)] The `shuffle calculus' $\Omega_{sh}(A)=A\rbiprod \Sh_-(\Lambda^1)$, where $\Sh_-(\Lambda^1)$ is the super-braided shuffle algebra of $\Lambda^1$ in the category of right $A$-crossed modules and $\extd$ is determined recursively from degree $1$ as a super-coderivation with $\delta\eta=-\eta\z\tens\omega\pi(\eta\o)$ so that
\[\extd (a\tens\eta)=a\o\tens\omega\pi(a\t)\tens\eta-a\o\tens\eta\z\tens\omega\pi(a\t\eta\o),\quad\forall\,\eta\in\Lambda^1.\] Here  $\Omega_{sh}(A)$ is inner for some element $\theta\in \Lambda^1$ if and only if $\theta$ makes $(\Lambda^1,\omega)$ inner and $\Psi(\eta\tens\theta)=\theta\tens \eta$ for any $\eta\in \Lambda^1.$
\end{itemize}

\subsection{Codifferential calculi}

 A \textit{first order codifferential structure} on a coalgebra $H$ is \cite{MaTao1} a pair $(\Omega^1,i),$ where $\Omega^1$ is a $H$-$H$-bicomodule and $i:\Omega^1\to H$ is a linear map such that
\begin{equation}\label{coLebniz} \Delta\circ i=(i\tens\id)\Delta_R+ (\id\tens i)\Delta_L.
\end{equation}
A codifferential calculus is said to be \textit{coinner} if the coderivation $i$ is given by
\[i(\eta)=\<\vartheta,\eta_{(0)}\>\eta_{(1)}-\eta_{(-1)}\<\vartheta,\eta_{(0)}\>,\quad \forall\,\eta\in\Omega^1,\]
for some element $\vartheta\in \Omega^{1*}.$ Here $\Delta_R=(\ )\z\tens (\ )\o$ and $\Delta_L=(\ )\mo\tens (\ )\z$ denote the right and left coaction respectively, and $\<\ ,\ \>$ the evaluation pairing. 

When $H$ is a Hopf algebra, we have of course the notion of left, right and bi-covariant codifferential calculi with respect to left and right actions of $H$. Thus, a first order codifferential structure on $H$ is \textit{bicovariant} clearly means an $H$-Hopf bimodule $\Omega^1$ together with a bimodule map $i:\Omega^1\to H$ such that $\Delta\circ i=(i\tens\id)\Delta_R+ (\id\tens i)\Delta_L.$ We also note that any Hopf algebra $H$ is canonically a right $H$-crossed module in a different way from (\ref{Aplus1}), namely by the right adjoint action and right regular coaction (given by the coproduct). This projects down to a second $H$-crossed module structure on $H^+$,
\begin{equation}\label{Aplus2} g\ra h=Sh\o g h\t,\quad \Delta_R=\Delta-1\tens\id,\quad \forall g\in H^+,\,h\in H.
\end{equation}
First order bicovariant codifferential calculi over $H$ are isomorphic to ones of the form 
\begin{equation}\label{fcodiff}\Omega^1=H\tens\Lambda^1,\quad i:\Lambda^1\to H^+\end{equation}
given by data $(\Lambda^1,i)$  where $\Lambda^1$ is a right $H$-crossed module and $i$ is morphism of right $H$-crossed modules (and extended to $\Omega^1$ as a left $H$-module map). The calculus is coinner if and only if there exists $\vartheta\in \Lambda^{1*}$ such that $i(\eta)=\<\vartheta,\eta\z\>\eta\o-\<\vartheta,\eta\>1_H$ for all $\eta\in\Lambda^1.$

Similarly, a  \textit{codifferential graded coalgebra} on a coalgebra $H$ is defined in~\cite{MaTao1} to be a $\mathbb{N}_0$-graded coalgebra $\Omega=\oplus_{n\ge 0}\Omega^n$ with $\Omega^0=H$ equipped with $i:\Omega\to \Omega$ of degree $-1$ with $i^2=0$ and obeying the super-coderivation property \[ \Delta\circ i(\eta)=(i\otimes\mathrm{id}+(-1)^{|\ |}\otimes i)\circ\Delta\eta,\quad \forall \eta\in\Omega,\] where $(-1)^{|\ |}\eta=(-1)^{|\eta|}\eta.$
One necessarily has $i=0$ when restricted to $H$. We say $(\Omega, i)$ is {\em coinner} if there exists an element $\vartheta\in \Omega^{1*}$ such that
$$i(\eta)=\<\vartheta,\eta\o\>\eta\t+(-1)^{|\eta|} \eta\o\<\vartheta,\eta\t\>$$
for any $\eta\in\Omega.$ Here $\Delta=(\ )\o\tens (\ )\t$ denotes the coproduct of the underlying coalgebra of $\Omega$.
Dual to Definition~\ref{sdga}, a {\em strongly bicovariant codifferential graded algebra}~\cite{MaTao1} on a Hopf algebra $H$ is an $\mathbb{N}_0$-graded super-Hopf algebra extending $H$ in degree 0 and equipped with a degree $-1$, square zero super-derivation and super-coderivation. If $(\Lambda^1, i)$ (or $(\Lambda^1,\vartheta)$) defines a first order bicovariant codifferential calculus on a Hopf algebra $H$ then we have following  constructions of strongly bicovariant codifferential graded algebras on $H$ dual to those in the preceding subsection~\cite[Proposition 4.2, Proposition 4.3]{MaTao1}:
\begin{itemize}
\item[c1)] The `maximal' coinner one $\Omega_\vartheta(H)=H\rbiprod B_\vartheta(\Lambda^1),$ where $B_\vartheta(\Lambda^1)$ is a sub-braided-super Hopf algebra of $\Sh_-(\Lambda^1)$ and $\vartheta\in\Lambda^{1*}$ is such that $\<\vartheta,\eta\ra h\>=\epsilon_H(h)\<\vartheta,\eta\>$ for all $\eta\in\Lambda^1,h\in H.$ In fact, any coinner strongly bicovariant codifferential graded algebra on $H$ with coinner data right-invariant that is `cogenerated by its degree $0$ and $1$' can be embedded into such $\Omega_\vartheta(H)$ as a sub-object;
\item[c2)] The `Woronowicz' coinner one $\Omega_w(H)=H\rbiprod B_-(\Lambda^1)$ subject to the dual conditions of (\ref{wor-con});
\item[c3)] The tensor one $\Omega_{tens}(H)=H\rbiprod T_-\Lambda^1,$ where $T_-\Lambda^1$ is the braided-super tensor algebra of $\Lambda^1$ in the category of right $H$-crossed modules and codifferential is extended from $i$ as a super-derivation. Moreover, $\Omega_{tens}(H)$ is coinner with $\vartheta\in \Lambda^{1*}$ if and only if its first order is coinner with the same $\vartheta\in\Lambda^{1*}$ such that $\<\vartheta,\xi\>\eta=\eta\z\<\vartheta,\xi\ra \eta\o\>$ for any $\xi,\eta\in\Lambda^1.$
\end{itemize}

We will be interested in the self-dual case when we have both a differential and a codifferential structure at the same time. In the  bicovariant case this corresponds to triple $(\Lambda^1,\omega,i)$ where $\omega:A^+\to\Lambda^1$ and $i:\Lambda^1\to A^+$ are morphisms for the appropriate right $A$-crossed module structures on $A^+$ by superposing (\ref{fdiff}) and (\ref{fcodiff}) with $H=A$. We say that the calculus corresponding to $(\Lambda^1,\omega)$ is {\em augmented} by $i$. A strongly bicovariant differential graded algebra is similarly augmented if it admits a strongly bicovariant codifferential structure, etc. In general, the first order data $(\Omega^1,\extd,i)$ do not extend to higher orders automatically. We have \cite[Propositions 4.5 and 4.6]{MaTao1}, 
\begin{itemize}
\item[a1)]  The `universal' inner construction d1) is augmented by $i$ if and only if $\theta\ra i(\theta)$ is graded central in $\Omega_{\theta}(A);$
\item[a2)] For the `Woronowicz' or braided exterior algebra construction d2), an augmentation $i:\Lambda^1\to A^+$ extends to degree two if $\eta\ra i(\zeta)=0$ for all $\sum\eta\tens\zeta\in \ker(\id-\Psi);$
\item[a3)] The `maximal' coinner  $\Omega_{\vartheta}(H)=H\rbiprod B_\vartheta(\Lambda^1)$ in c1) forms an augmented differential calculus by $\omega:H^+\to\Lambda^1$ if and only if 
\begin{align*}
\<\vartheta\tens\vartheta,\xi\z\tens &\tilde{\omega}(\xi\o)\>\xi\t=\<\vartheta\tens\vartheta,\xi\z\tens \tilde{\omega}(\xi\o)\>,\nonumber\\
\<\vartheta\tens\vartheta,\eta_1\z\tens &\tilde{\omega}(\eta_1\o)\> \eta_2\tens\cdots\tens \eta_n\\
&{}\quad=(-1)^{n-1}\eta_1\tens\cdots\tens \eta_{n-1}\<\vartheta\tens\vartheta,\eta_n\z\tens \tilde{\omega}(\eta_n\o)\>,\nonumber
\end{align*}
for all $\xi\in \Lambda^1\subseteq B_{\vartheta}(\Lambda^1)$ and $\eta_1\tens\cdots\tens \eta_n\in B_{\vartheta}(\Lambda^1).$
\end{itemize}

In the finite-dimensional case, it is clear that $(\Omega^1,\extd,i)$ is an augmented first order bicovariant differential calculus over $A$ if and only if $(\Omega^{1*},i^*,\extd^*)$ is augmented over $A^*.$ If $(\Omega,\extd,i)$ is augmented strongly bicovariant differential graded algebra on $\Omega^0=A$, then the graded dual $(\Omega^{gr},i^*,\extd^*)$ is augmented on $A^*$. it should be clear that our differential and codifferential constructions are similarly related by duality provided the relevant Hopf algebras, actions and coactions are related by duality. Details are in \cite[Lemma 4.10, Corollary~4.13]{MaTao1}.

\subsection{Hopf quivers}

A quiver is a quadruple $Q=(Q_0,Q_1,s,t),$ where $Q_0$ is the set of vertices, $Q_1$ is the set of arrows, and $s,t: Q_1 \to Q_0$ are two maps assigning respectively the source and the target for each arrow. A path of length $l \ge 1$ in the quiver $Q$ is a finitely ordered sequence of $l$ arrows $\alpha_1 \cdots \alpha_l$ such that $s(\alpha_{i+1})=t(\alpha_i)$ for $1 \le i \le l-1.$ By convention a vertex is said to be a trivial path of length $0,$ an arrow is a path of length $1,$ and an arrow $\alpha$ is called a loop if $t(\alpha)=s(\alpha).$ We call a quiver is a directed graph or digraph if it has no loops or multiple arrows. A quiver $Q$ is called finite if both $Q_0$ and $Q_1$ are finite sets. Let ${}^{x}Q_1{}^{y}$ denotes the set of all the arrows from $x$ to $y.$ 

The path coalgebra  denoted by $kQ$ is the $k$-space spanned by the paths of $Q$ with comultiplication and counit defined by $\D(x)=x \otimes x,\ \e(x)=1$ for each $x \in Q_0,$ and for each non-trivial path $p=\alpha_1 \cdots \alpha_n,$
\begin{equation*}
\D(p)=s(\alpha_1) \tens p + \sum_{i=1}^{n-1}\alpha_1 \cdots \alpha_{i} \tens
\alpha_{i+1} \cdots \alpha_n + p \tens t(\alpha_n),\quad\e(p)=0.
\end{equation*}
The length of paths gives a natural gradation to the path coalgebra.  Let $Q_n$ denote the set of paths of length $n$ in $Q,$ then $k Q=\oplus_{n \ge 0} k Q_n$ and $\D(k Q_n) \subseteq \oplus_{n=i+j}k Q_i \otimes k Q_j.$ Let ${}^{x}kQ_1{}^{y}=k{}^xQ_1{}^y$ denotes the subspace of  $kQ_1$ spanned by all the arrows from $x$ to $y.$

The path algebra also denoted by $kQ$ when the context is clear has the same underlying vector space as the path coalgebra and the multiplication is defined by concatenation of paths, i.e.
\[pq=\begin{cases}
                         \alpha_1\cdots\alpha_l\beta_1\cdots\beta_m,   &  \text{if } t(\alpha_l)=s(\beta_1), \\
                          0,   &  \text{otherwise,}           
          \end{cases}\]
for paths $p=\alpha_1\cdots\alpha_l$ and $q=\beta_1\cdots\beta_m.$ Hence $k Q$ as an algebra is length-graded associated algebra, as $kQ_i\cdot k Q_j\subseteq kQ_{i+j}$ for any $i,j\ge 0.$ In fact, if $Q$ is a finite quiver, then the graded dual of $kQ$ as the path coalgebra is exactly $k Q$ as the path algebra $kQ$. This justifies our notations. Clearly, $k{}^xQ_1{}^y$ is also identified as its own dual space. For brevity, in view of this, we still use arrows to denote the basis of $k{}^xQ_1{}^y$ when considered in the path algebra.

Finally, a quiver $Q$ is said to be a {\em Hopf quiver} if the corresponding path coalgebra $kQ$ admits a length-graded Hopf algebra structure~\cite{cr2}. Hopf quivers can be determined by ramification datum of groups. 
Let $G$ be a group, $\mathfrak{C}$ the set of
conjugacy classes. A ramification datum $R$ of the group $G$ is a
formal sum $\sum_{C \in \mathfrak{C}}R_CC$ of conjugacy classes with
coefficients in $\mathbb{N}_0=\{0,1,2,\cdots\}.$ The corresponding
Hopf quiver $Q=Q(G,R)$ is defined as follows: the set of vertices
$Q_0$ is $G,$ and for each $x \in G$ and $c \in C$ and each $C\in\mathfrak{C}$ there are $R_C$
arrows $x\longrightarrow xc.$  We will say that a Hopf quiver is {\em coloured} if these arrows have been enumerated $1,\cdots,R_C$ for every $c$ in every $C$. 

For a given Hopf quiver $Q(G, R),$ the isoclasses of graded Hopf structures on $kQ$ is in one-to-one correspondence with the isoclasses of $kG$-Hopf bimodule structures on $kQ_1.$ The graded Hopf structures are obtained from Hopf bimodules via the quantum shuffle product of Rosso \cite{cr2}. Equivalently, a finite quiver $Q$ is a Hopf quiver if and only if the path algebra $kQ$ admits a graded Hopf structure 
and the isoclasses of graded Hopf structures on $kQ$ are in one-to-one correspondence with the isoclasses of $k(G)$-Hopf bimodule structures on $k Q_1.$

We will also need super versions. Thus, a super-quiver is a quiver with $\Z_2=\{\bar{0},\bar{1}\}$-grading on $Q_1$. Naturally, the path coalgebra $kQ$ has a $\Z_2$-grading by assigning $|p|=\sum_{i=1}^n|\alpha_i|$ to any path $p=\alpha_1\alpha_2\cdots\alpha_n$ and hence becomes a super-coalgebra, so we call $kQ$ path super-coalgebra. Then a Hopf super-quiver is a super-quiver whose path super-coalgebra admits a length-graded Hopf super-algebra. It can be given by super ramification datum of groups, which is a pair of formal sums $(R_{\bar{0}}=\sum_{C\in\mathfrak{C}}R_{C,\bar{0}}C,\,R_{\bar{1}}=\sum_{C\in \mathfrak{C}}R_{C,\bar{1}}C)$ of conjugacy classes with non-negative integer coefficients.
The associated Hopf super-quiver $Q(G,R_{\bar{0}},R_{\bar{1}})$ is defined as follows: the set of vertices is $G,$ for each $x\in G$ and $c\in C$, there are $R_{C,\bar{0}}$ even arrows and $R_{C,\bar{1}}$ odd arrows from $x$ to $xc.$ In fact, the isoclasses of graded Hopf super algebra structures on $kQ(G,R_{\bar{0}},R_{\bar{1}})$ are in one-to-one correspondence with the isoclasses of pairs of $k(Q_0)$-Hopf bimodules on $kQ_{1,\bar{0}}$ and $kQ_{1,\bar{1}}.$ For our purpose, we only consider Hopf super-quiver $Q(G, R_{\bar 0}, R_{\bar 1})$ with $R_{\bar 0}=0,$ i.e. all arrows are odd. In this case, the isoclasses of the graded Hopf super-algebra structures on $kQ$ is in one-to-one correspondence with the isoclasses of $k(Q_0)=kG$-Hopf bimodule structures on $kQ_1.$
Equivalently, on a finite group, we consider graded Hopf super-algebra structures on the path algebra $kQ$ with all arrows are odd,  whose isoclasses are equivalent to the isoclasses of $k(Q_0)=k(G)$-Hopf bimodule structures on $kQ_1.$

\section{(Co)differentials on group algebras and group function algebras}

 It is shown in \cite{MaTao1} that given a pair $(\bar Q,Q)$ of a digraph $\bar Q$ contained in a quiver $Q$, with vertex set $X$, there is an inner first order calculus \cite[Example 3.2]{MaTao1}  $\Omega^1(\bar Q, Q)$ on $A=k(X)$ given by 
\begin{equation}\label{quivcalc}\Omega^1=k Q_1=\bigoplus_{x,y\in X}k {}^xQ_1{}^y,\quad \extd=[\theta,\ ]\end{equation}
where $\theta$ is the sum of all arrows in $\bar{Q}$. Thus the space of 1-forms has a basis given by all arrows of the quiver, with left and right module structures given by the source and target maps $s,t$ (so $a.\beta=a(s(\beta))\beta$ and $\beta.a=\beta a(t(\beta))$ for every $a\in k(X)$ and every arrow $\beta$). Moreover, every first order generalised differential calculus on $k(X)$ is isomorphic to such a  `quiver differential calculus' canonical form  (in this direction \cite[Example 2.2]{MaTao1}, any calculus must have $\Omega^1 $ an $X$-bigraded space and be inner; we then define $\bar Q$ as having an arrow $x\to y$ whenever the component $\theta_{x,y}\ne 0$). One can think of the the data $\bar Q$ here as the choice of a distinguished arrow (we will later label it by `1') in $Q$ between any two distinct vertices for which arrows exist. Different choices clearly give isomorphic calculi, so isomorphism classes of first order calculi are given by data $(\bar Q,R)$ where the digraph $\bar Q$ is supplemented  by a non-negative integer  $R_{x,y}=\dim k{}^xQ^y$ with $R_{x,y}\ge 1$ whenever $x\to y$ in $\bar Q$ and zero otherwise. It is also shown \cite[Corollary 2.3]{MaTao1} that this first order calculus extends to an inner differential graded algebra $\Omega_\theta(X)=kQ/J_\theta$ on $k(X),$ where $kQ$ is the path algebra and $J_\theta$ is the graded ideal generated by $\theta^2a-a\theta^2, \theta^2\omega-\omega\theta^2$ for all $a\in A,\omega\in kQ_1$.  Every inner differential graded algebra on $k(X)$ that is generated algebraically by its degree $0,1$ components  is isomorphic to a quotient of this $\Omega_\theta(X)$. 

The dual version of these results apply to a coalgebra $C=kX$ where $\Delta x=x\tens x$, $\epsilon (x)=1$ for all $x\in X$ and $X$ is not assumed to be finite (but is assumed to be nonempty). Then dual to \cite[Example~2.2]{MaTao1},  first order codifferential calculi on $C$ are in one-to-one correspondence with pairs $(\Omega^1,\vartheta),$ where
\begin{itemize}
    \item[1)] $\Omega^1=\oplus_{x,y\in X}{}^x\Omega^1{}^y$ is a $X$-$X$-bigraded vector space, and
    \item[2)] $\vartheta=\sum_{x,y\in X}\vartheta_{x,y}$ is a formal sum of linear functions in the graded dual $\oplus_{x,y\in X} ({}^x \Omega^1 {}^y)^*$ of $\Omega^1$ with components $\vartheta_{x,y}\in ({}^x\Omega^1{}^y)^*$ and $\vartheta_{x,x}=0$ for any $x,y\in X.$
\end{itemize}
and any codifferential calculus is necessarily coinner. Next, to avoid unnecessarily difficulty, we assume that each homogeneous space ${}^x\Omega^1{}^y$ is finite dimensional. Then a linear function $\vartheta_{x,y}$ can be taken as $\theta_{x,y}^*$ for some element $\theta_{x,y}\in {}^x\Omega^1{}^y$ for all $x,y$ in $X$. As result we can classify the first order codifferential calculi on $kX$ by the same data pairs $(\bar Q,R)$ as above except that $\bar Q_0=X$ is not assumed to be finite. Moreover, we can realise such data as we did before as a digraph-quiver pair $\bar Q\subseteq Q$; associated to such a pair  is a first order codifferential calculus on $kX$ given by 
\begin{equation}\label{codiffX}\Omega^1=k Q_1=\oplus_{x,y} {}^x kQ_1 {}^y,\quad i(x\rightarrow y)=\<\vartheta, x\rightarrow y\>y-x \<\vartheta, x\rightarrow y\>\end{equation} with $\vartheta(\alpha)=1$ if $\alpha\in \bar Q_1$ and zero otherwise, extended linearly.  
Every first order codifferential calculus on $kX$ is isomorphic to such a `quiver codifferential calculus' canonical form.

For higher orders, dual to $\Omega_\theta(X)$ above, we have from the same data $\bar Q\subseteq Q$ a coinner codifferential graded coalgebra $(\Omega_\vartheta(kX), i)$, where 
\begin{align*}
\Omega_\vartheta(kX)=\big\{w\in kQ\,|\,\<\vartheta,w\o\>\<\vartheta,w\t\>w\th\tens w\four&=w\o\<\vartheta,w\t\>\<\vartheta,w\th\>\tens w\four\\
&=w\o\tens w\t \<\vartheta,w\th\>\<\vartheta,w\four\>\big\}
\end{align*}
is a subcoalgebra of path coalgebra $kQ$ and  codifferential 
\begin{equation}\label{kX-subpathcoalgebra-i}
i(p)=\<\vartheta,\alpha_1\>\alpha_2\cdots \alpha_n+(-1)^{n}\alpha_1\cdots \alpha_{n-1}\<\vartheta,\alpha_n\>
\end{equation}
for all path $p=\alpha_1 \alpha_2\cdots \alpha_n$ of $Q$ with $\vartheta=\sum_{\alpha\in \bar Q_1}\delta_{\alpha}.$  Every coinner codifferential graded coalgebra on $kX$ that is `cogenerated by its degree $0,1$ components' (i.e. there is a coalgebra embeding from $\Omega^1$ to the cotensor coalgebra $\mathrm{CoT}_{kX}\Omega^1$) is isomorphic to a sub-object of such $(\Omega_\vartheta(kX),i)$ for some digraph-quiver pair $\bar Q\subseteq Q.$ 

\subsection{Differentials on $k(G)$ and codifferentials on $kG$} We now specialise to the case $X=G$ a group, which we take finite in the case of $k(G)$. Our goal is to say more in view of the Hopf algebra theory of Section~2. Again  we denote by $\mathfrak{C}$ the set of all the conjugacy classes of $G.$ If $V$ is a left $G$-module we denote by ${}_GV$ the space of invariant elements. Similarly $V_G$ in the right module case. We let $Z_c\subseteq G$ be the centraliser of any $c\in G$. 

\begin{lemma}\label{kofG-dif-lem}
Let $G$ be a finite group and $A=k(G)$. The data $(\Lambda^1,\omega)$ in (\ref{fdiff}) for a bicovariant calculus is equivalent to:
\begin{itemize}
\item[(a)] $\Lambda^1$ a $G$-graded left $G$-module $\Lambda^1=\oplus_{g\in G}\Lambda^1_g $ s.t. $h\la\Lambda^1_g=\Lambda^1_{hgh^{-1}}$ for any $g,h\in G,$ and 
\item[(b)] a set of pairs $\{(c, \omega_c)\ |\ c\in C,\  \omega_c\in{}_{Z_c}\!\Lambda^1_c\,\}_{C\in\mathfrak{C},C\neq\{e\}}.$
\end{itemize}
Moreover, the calculus here is always inner with  $\theta=\theta_e+\sum_{g\in G\setminus\{e\}}\omega_g\in\Lambda^1$ where $\omega_g=h\la\omega_c$ if $g=hch^{-1}$ for some $h\in G, c\in C$ and any $\theta_e\in\Lambda^1_e$. 
\end{lemma}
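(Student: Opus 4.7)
The plan is to translate the general data $(\Lambda^1,\omega)$ of (\ref{fdiff}) into the concrete setting $A=k(G)$ and then exhibit an inner element. First I would identify right $k(G)$-crossed modules: a right $k(G)$-module structure on $\Lambda^1$ is the same as a $G$-grading $\Lambda^1=\oplus_{g\in G}\Lambda^1_g$ via the primitive idempotents $\delta_g$, and using the duality pairing $\<g,\delta_h\>=\delta_{g,h}$ of $kG$ with $k(G)$, a right $k(G)$-comodule is the same as a left $kG$-module by $h\la v=v\z\<h,v\o\>$. Substituting homogeneous $v\in\Lambda^1_g$ and $a=\delta_k$ into the crossed module axiom and expanding $\Delta^2\delta_k=\sum_{xyz=k}\delta_x\tens\delta_y\tens\delta_z$ with $S\delta_x=\delta_{x^{-1}}$ reduces the axiom to $h\la\Lambda^1_g\subseteq\Lambda^1_{hgh^{-1}}$, with equality on replacing $h$ by $h^{-1}$. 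This is (a).

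Next I would apply this dictionary to $A^+$ with its crossed module structure (\ref{Aplus1}). The right regular action gives $(A^+)_g=k\delta_g$ for $g\ne e$, and a direct computation of the right adjoint coaction yields $\Delta_R\delta_g=\sum_h\delta_{hgh^{-1}}\tens\delta_h$, corresponding under the above dictionary to the left conjugation action $h\la\delta_g=\delta_{hgh^{-1}}$. A crossed module morphism $\omega:A^+\to\Lambda^1$ must therefore preserve the grading, so $\omega(\delta_g)\in\Lambda^1_g$, and be $G$-equivariant, so $\omega(\delta_{hgh^{-1}})=h\la\omega(\delta_g)$. Equivariance forces $\omega$ to be determined by the single value $\omega_c:=\omega(\delta_c)$ on one representative $c$ of each nontrivial conjugacy class $C$; specialising equivariance to $h\in Z_c$ gives $h\la\omega_c=\omega_c$, i.e.\ $\omega_c\in{}_{Z_c}\!\Lambda^1_c$. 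Conversely any such data extends uniquely by equivariance to a crossed module morphism, so the two constructions are mutually inverse, establishing the bijection with (b); the formula $\omega_g=h\la\omega_c$ when $g=hch^{-1}$ in the lemma statement is just a restatement of this equivariance.

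Finally, for innerness, put $\omega_g:=\omega(\delta_g)\in\Lambda^1_g$ for $g\ne e$, choose any $\theta_e\in\Lambda^1_e$, and set $\theta=\theta_e+\sum_{g\ne e}\omega_g\in\Lambda^1$. Since the right $A$-action on $\Lambda^1$ is simply projection onto graded components, $\theta\ra\delta_h=\omega_h=\omega(\delta_h)$ for every $h\ne e$, and hence $\theta\ra a=\omega(a)$ for all $a\in A^+$ by linearity. By the inner criterion recalled after (\ref{fdiff}) the calculus is inner with this $\theta$; the ambiguity in $\theta_e$ is harmless because $\delta_e\notin A^+$ and so does not enter the action. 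The only step that is not routine bookkeeping is the identification of the right adjoint coaction on $A^+$ with the conjugation action of $G$; once that is in hand, (a), (b) and innerness all drop out from straightforward manipulations with the idempotents $\delta_g$.
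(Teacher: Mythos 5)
Your proposal is correct and follows essentially the same route as the paper: translating right $k(G)$-crossed modules into $G$-graded left $G$-modules, reading off $\omega$ from its values $\omega_g=\omega(\delta_g)\in\Lambda^1_g$ with conjugation-equivariance reducing the data to one centraliser-invariant element per nontrivial conjugacy class, and exhibiting $\theta=\theta_e+\sum_{g\ne e}\omega_g$ so that $\theta\ra a=\omega(a)$ on $A^+=\mathrm{span}\{\delta_g\}_{g\ne e}$. The only detail you leave implicit is the well-definedness of the converse assignment $\omega_g=h\la\omega_c$ when $g=hch^{-1}$ for two different $h$, which is exactly where the $Z_c$-invariance of $\omega_c$ is used, but this is routine and your argument clearly contains the needed ingredient.
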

\proof It is well known that a vector space is a right $k(G)$-crossed module if and only if it is a left $kG$-crossed module. So the right $k(G)$-crossed module $\Lambda^1$ is equivalent to the data a). Note here $\Lambda^1_g:=\Lambda^1\ra\,\delta_g$ and $\Delta_R(v)=\sum_{h\in G}h\la v\tens \delta_h.$ Also, the right $k(G)$-module map $\omega:k(G)^+\to \Lambda^1$ is uniquely defined by $\{\omega_g=\omega(\delta_g)\in\Lambda^1_g|\,g\in G\setminus\{e\}\}$ since the right-module structure corresponds to the grading. We also need $h\la \omega_g=\omega_{hgh^{-1}}$ for all $h\in G$ for $\omega$ to be a right comodule map where $k(G)^+$ has the right adjoint coaction. This is equivalent to the data stated in b). Indeed, given the collection $\{\omega_g\in \Lambda^1_g\}_{g\ne e}$ such that $h\la \omega_g=\omega_{hgh^{-1}}$ for all $h\in G$, clearly $\omega_g\in {}_{Z_g}\Lambda^1_g$. We can choose an element $c\in C$ and its associated $\omega_c$ for each nontrivial $C\in \mathfrak{C}$. Conversely, suppose we are given the data b) then for any $g\in G\setminus\{e\}$ write $g=hch^{-1}$ for some  $C$ and its chosen $c\in C$ and some $h\in G.$ One can set $\omega_g=h\la\omega_c\in \Lambda^1_g$. This is well-defined because  if also $g=h'ch'^{-1},$ then $h'=hu$ for some $u\in Z_c,$ so $h'\la \omega_c=h\la(u\la \omega_c)=h\la \omega_c.$ 
For $\theta$, because $A=k(G)$ is commutative, we have $\Lambda^1\square A=\Lambda^1_A\tens A$. The elements of $\Lambda^1_A$ are $\eta=\sum_h\eta_h$ such that $\eta_h\ra\delta_g=\epsilon(\delta_g)\eta_h=\delta_{g,e}\eta_h$ for all $h\in G,$ which implies $\Lambda^1_A=\Lambda^1_e$.  So we need $\Delta_R\theta-\theta\tens 1\in \Lambda^1_e\tens k(G)$ as the condition in (\ref{fdiff}) for an inner bicovariant calculus. This means  $\theta=\sum_{g\in G}\theta_g\in\Lambda^1$, where $\theta_g=\theta\ra \delta_g\in\Lambda^1_g$ such that $h\la\theta_g=\theta_{hgh^{-1}}$ for any $h\in G$ and $g\in G\setminus\{e\}.$ Such $\{\theta_g\}_{g\in G\atop g\neq e}$ are the same data as $\omega$, and we also have a free choice of  $\theta_e$.  Note that the data in (b) are equivalent if they define the same map $\omega$, meaning that for  each $C$ we have $(c,\omega_c)\sim (c',\omega'_{c'})$ in the sense that $c'=kck^{-1}$, $\omega'_{c'}=k\la\omega_c$ for some $k\in G$. \endproof

Recall that  a digraph is a {\em Cayley digraph} if it is of  the form $\bar Q=Q(G, \bar C)$ where $Q_0=G$ is a group, $\bar C\subseteq G\setminus\{e\}$ is an ad-stable subset  and the digraph has an arrow $x\to y$ iff $x^{-1}y\in\bar C$.  The set of arrows of a Cayley digraph has canonical and mutually commuting left and right action $h\ast(x\to y)=(xh^{-1}\to y h^{-1})$ and  $(x\to y)\ast h=h^{-1}x\to h^{-1}y$ for all $h\in G$. If $Q(G,R)$ is a Hopf quiver we let $\bar C$ be the sum of conjugacy classes where $R_C\ne 0$ and clearly obtain a Cayley digraph. If $Q(G,R)$ is a coloured Hopf quiver (so the arrows are enumerated  $x\xrightarrow[]{(i)}xc$ where $i=1,\cdots,R_C$ for $c\in C$) then there is a canonical inclusion $\bar Q\subseteq Q$ as the arrows $x\xrightarrow[]{(1)}y$ and moreover $\ast$ from the right extends canonically  $(x \xrightarrow[]{(i)}y)\ast h=h^{-1}x\xrightarrow[]{(i)} h^{-1}y$. We denote these various canonical actions by $\ast$ and we define  a {\em Hopf digraph-quiver triple} $(\bar Q\subseteq Q,\ast)$ to be  coloured Hopf quiver with a left action $\ast$ of $G$ on $kQ_1$ such that
\begin{enumerate}
\item $h\ast k {}^xQ_1{}^y=k {}^{xh^{-1}}Q_1{}^{yh^{-1}}$ for all $h,x,y\in G$.
\item $\ast$ restricts on $\bar Q_1$ to the canonical left action.
\item $\ast$ commutes with the canonical right action on $k Q_1$.
\end{enumerate}

\begin{proposition}\label{kofG-dif-can} Let $(\bar{Q}\subseteq Q,\ast)$ be a Hopf digraph-quiver triple on a finite group $G.$ The associated `quiver calculus' $\Omega^1(\bar Q,Q)$ is bicovariant and inner with
\begin{gather*}
\Omega^1=k Q_1,\quad x\xrightarrow[]{(i)} y.f=x\xrightarrow[]{(i)} y f(y),\quad f.x\xrightarrow[]{(i)} y=f(x)x\xrightarrow[]{(i)} y,\quad \theta=\sum_{x^{-1}y\in \bar{C}}x\xrightarrow[]{(1)} y,\\
\Delta_L(x\xrightarrow[]{(i)} y)=\sum_{h\in G}\delta_h\tens (h^{-1}x\xrightarrow[]{(i)} h^{-1}y),\quad
\Delta_R(x\xrightarrow[]{(i)} y)=\sum_{h\in G}h\ast (x\xrightarrow[]{(i)} y) \tens \delta_h,
\end{gather*}
where $i=1,2,\dots, R_C$ for $x,y$ such that $x^{-1}y\in C.$
\end{proposition}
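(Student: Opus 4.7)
The plan is to apply Lemma~\ref{kofG-dif-lem} by producing crossed-module data $(\Lambda^1,\omega)$ from the Hopf digraph-quiver triple, and then identifying the resulting abstract bicovariant calculus~(\ref{fdiff}) with $kQ_1$ equipped with the formulas in the statement.

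Concretely, I would take $\Lambda^1$ to be the span of all arrows with source $e$, graded by target so that $\Lambda^1_c=k({}^eQ_1{}^c)$. For the left $G$-action required by item~(a) of Lemma~\ref{kofG-dif-lem}, I combine the given $\ast$ with the canonical right action and set
\[h\la \alpha:=(h\ast \alpha)\ast h^{-1},\quad \alpha\in\Lambda^1.\]
Condition~(1) ensures $h\ast$ sends ${}^eQ_1{}^c$ into ${}^{h^{-1}}Q_1{}^{ch^{-1}}$, whereupon the canonical right action by $h^{-1}$ lands in ${}^eQ_1{}^{hch^{-1}}$; this yields the grading shift $h\la\Lambda^1_c\subseteq\Lambda^1_{hch^{-1}}$, and condition~(3) (commutativity of $\ast$ with the canonical right action) makes $\la$ a genuine left action. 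For item~(b), for each nontrivial conjugacy class $C$ with $R_C>0$ I choose a representative $c\in C\subseteq\bar C$ and set $\omega_c:=(e\xrightarrow[]{(1)}c)\in\Lambda^1_c$. Invoking condition~(2), which identifies $\ast$ with the canonical left action on $\bar Q_1$, one computes $h\la\omega_c=(e\xrightarrow[]{(1)}hch^{-1})$, so every $h\in Z_c$ fixes $\omega_c$ and $\omega_c\in{}_{Z_c}\Lambda^1_c$.

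Having fed this data into Lemma~\ref{kofG-dif-lem}, I identify the abstract $k(G)\tens\Lambda^1$ with $kQ_1$ via $\delta_x\tens(e\xrightarrow[]{(i)}c)\mapsto(x\xrightarrow[]{(i)}xc)$, which is a bijection of bases. Under this identification the crossed-product bimodule structure reproduces the stated source/target formulas, and the left coaction $\Delta_L(\delta_x\tens v)=\sum_h\delta_h\tens(\delta_{h^{-1}x}\tens v)$ unfolds to the claimed $\sum_h\delta_h\tens(h^{-1}x\xrightarrow[]{(i)}h^{-1}y)$. I expect the main obstacle to be the right coaction: the Hopf-bimodule formula
\[\Delta_R(\delta_x\tens v)=\sum_g\delta_g\tens((g^{-1}x)\la v)\tens\delta_{g^{-1}x}\]
must be shown to equal $\sum_h h\ast(x\xrightarrow[]{(i)}y)\tens\delta_h$. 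The key move is to write an arbitrary arrow as $x\xrightarrow[]{(i)}xc=(e\xrightarrow[]{(i)}c)\ast x^{-1}$ via the canonical right action; condition~(3) then lets $h\ast$ slide past $\ast x^{-1}$, and the definition of $\la$ matches the two expressions after the substitution $h=g^{-1}x$. Finally, with the free choice $\theta_e=0$ in Lemma~\ref{kofG-dif-lem}, the inner element $\sum_{g\ne e}\omega_g\in\Lambda^1$ embeds into $\Omega^1$ as $1\tens\sum_{g\ne e}\omega_g=\sum_{x\in G,\,g\in\bar C}(x\xrightarrow[]{(1)}xg)=\sum_{x^{-1}y\in\bar C}(x\xrightarrow[]{(1)}y)$, the stated $\theta$.
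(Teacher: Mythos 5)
Your proposal is correct, but it proceeds by a genuinely different route from the paper's proof. The paper verifies the axioms directly on $kQ_1$: condition (1) of the triple is precisely what makes $\Delta_R$ (and $\Delta_L$) a $k(G)$-bimodule map, while condition (2) together with ad-stability of $\bar C$ is what makes $\extd=[\theta,\ ]$ a bicomodule map, via the change of variables $h^{-1}ah\mapsto a$ when comparing $\Delta_R(\extd\delta_x)$ with $(\extd\tens\id)\Delta\delta_x$. You instead manufacture the crossed-module data $(\Lambda^1,\omega)$ of Lemma~\ref{kofG-dif-lem} from the triple --- taking $\Lambda^1=\oplus_{c}k({}^eQ_1{}^c)$ with $h\la\alpha=(h\ast\alpha)\ast h^{-1}$ and $\omega_c=e\xrightarrow[]{(1)}c$ --- and transport the canonical calculus $k(G)\tens\Lambda^1$ back to $kQ_1$; this is essentially the inverse of the construction used later in Theorem~\ref{kofG-dif-thm}. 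Your key verifications all check out: condition (3) makes $\la$ a genuine action, condition (1) gives the grading shift $h\la\Lambda^1_c=\Lambda^1_{hch^{-1}}$, condition (2) gives $\omega_c\in{}_{Z_c}\Lambda^1_c$, and the identity $\sum_h\delta_{xh^{-1}}\tens h\la v\tens\delta_h=\sum_h h\ast(x\xrightarrow[]{(i)}y)\tens\delta_h$ follows by writing $x\xrightarrow[]{(i)}xc=(e\xrightarrow[]{(i)}c)\ast x^{-1}$ and sliding $h\ast$ past the canonical right action. What your approach buys is that bicovariance and innerness come for free from Lemma~\ref{kofG-dif-lem} rather than being re-verified; what the paper's direct check buys is self-containedness and a transparent view of which hypothesis is responsible for which axiom. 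One small imprecision: you assume $C\subseteq\bar C$ for every nontrivial class with $R_C>0$, whereas the triple in practice allows $\bar C$ to omit some such classes (compare the proof of Theorem~\ref{kofG-dif-thm}, where $\bar C$ is only the support of $\theta$); for those classes you should simply take $\omega_c=0$, which changes nothing else in your argument and still yields the stated $\theta$.
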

\proof
To show $(k Q_1,\extd=[\theta,\ ])$ as recalled the start of the section is bicovariant, it suffices to show that $\Delta_L$ and $\Delta_R$ are $k(G)$-bimodule map and $\extd=[\theta,\ ]$ is $k(G)$-bicomodule map. Consider $\Delta_R(\delta_k.(x\xrightarrow[]{(i)} xg))=\delta_{k,x}\Delta_R(x\xrightarrow[]{(i)} xg)=\delta_{k,x}\sum_{h\in G}h\ast(x\xrightarrow[]{(i)} xg)\otimes\delta_h$, then $\Delta_R$ is left module map iff the last expression equals to $\sum_{h\in G}\delta_{kh^{-1}}.h\ast(x\xrightarrow[]{(i)} xg)\otimes\delta_h,$ i.e. $h\ast(x\xrightarrow[]{(i)} xg)$ is a linear combination of arrows in $Q$ starting from $xh^{-1}.$ Similarly, $\Delta_R$ is right module map iff $h\ast (x\xrightarrow[]{(i)} xg)$ is a linear combination of arrows in $Q$ ending with $xgh^{-1}.$ Therefore, $\Delta_R$ is bimodule map iff $h\ast k {}^xQ_1{}^{y}\subseteq k{}^{xh^{-1}}Q_1{}^{yh^{-1}}$, which is the case under our assumptions. Similarly $\Delta_L$ is a bimodule map. Both $\Delta_{L,R}$ are coactions as they correspond to actions of $G$.  Next, $\Delta_R(\extd\delta_x)=\Delta_R(\theta\delta_x-\delta_x\theta)=\Delta_R\sum_{a\in\bar C}\big((xa^{-1}\xrightarrow[]{(1)}x)-(x\xrightarrow[]{(1)}xa)\big)=\sum_{a,h}\big((xa^{-1}h^{-1}\xrightarrow[]{(1)}xh^{-1})-(xh^{-1}\xrightarrow[]{(1)}xah^{-1})\big)\tens\delta_h$. On the other hand, $(\extd\tens\id)\Delta\delta_x=\sum_{h}(\theta\delta_{xh^{-1}}-\delta_{xh^{-1}}\theta)\tens\delta_h=\sum_{a,h}\big((xh^{-1}a^{-1}\xrightarrow[]{(1)}xh^{-1})-(xh^{-1}\xrightarrow[]{(1)}xh^{-1}a)\big)\tens\delta_h$. The two expressions agree after a change of variables $h^{-1}ah\mapsto a$, hence $\extd$ is a right comodule map.  Similarly for $\Delta_L$. 
\endproof

We say two Hopf digraph-quiver triples $(\bar{Q}\subseteq Q,\ast)$ and $(\bar{Q'}\subseteq Q',\ast')$ are isomorphic if the data $R_C$ and $\bar C$ are the same in the two cases and there exists a linear isomorphism $\varphi:k Q_1\to k Q'_1$ that $\varphi(k{}^xQ_1{}^y)=k{}^x{Q'}_1{}^y$ and $\varphi$ intertwines $\ast,\ast'$ and such that $\varphi(x\xrightarrow[]{(1)}xa)=x\xrightarrow[]{(1)'}xa$ for any $h,x,y\in G,\,a\in\bar{C}.$

\begin{theorem}\label{kofG-dif-thm}
Let $G$ be a finite group. Every bicovariant differential calculus on $k(G)$ is isomorphic to a Hopf digraph-quiver calculus of the form in Proposition~\ref{kofG-dif-can}.  
There is a bijection between the set of isomorphism classes of generalised bicovariant differential calculi on $k(G)$ and the set of isomorphism classes of Hopf digraph-quiver triples.
\end{theorem}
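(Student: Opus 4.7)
The strategy is to bridge Lemma~\ref{kofG-dif-lem} and Proposition~\ref{kofG-dif-can} by constructing a Hopf digraph-quiver triple directly from the data $(\Lambda^1,\omega)$ of a bicovariant calculus and conversely, and then matching the two notions of isomorphism.

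Starting from a bicovariant calculus on $k(G)$ and applying Lemma~\ref{kofG-dif-lem} yields $\Lambda^1=\bigoplus_g\Lambda^1_g$ with $h\la\Lambda^1_g=\Lambda^1_{hgh^{-1}}$ and elements $\omega_c\in{}_{Z_c}\Lambda^1_c$. Set the ramification $R_C:=\dim\Lambda^1_c$ (well-defined on $C$ because each $h\la$ is an isomorphism $\Lambda^1_c\to\Lambda^1_{hch^{-1}}$) and the ad-stable set $\bar C:=\{g\in G\setminus\{e\}\,|\,\omega_g\ne0\}$, giving the Cayley digraph $\bar Q=Q(G,\bar C)$. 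For each $g\in G\setminus\{e\}$ choose a basis $\{v^{(i)}_g\}_{i=1}^{R_C}$ of $\Lambda^1_g$ with $v^{(1)}_g=\omega_g$ whenever $g\in\bar C$. The bijection $\delta_x\otimes v^{(i)}_{x^{-1}y}\leftrightarrow(x\xrightarrow{(i)}y)$ then simultaneously colours $Q(G,R)$ and realises the canonical inclusion $\bar Q\subseteq Q$. Finally, transport the left $kG$-action on $\Lambda^1$ as $h\ast(\delta_x\otimes v):=\delta_{xh^{-1}}\otimes h\la v$; this is precisely what the right $k(G)$-coaction $\Delta_R(\delta_x\otimes v)=\sum_h\delta_{xh^{-1}}\otimes(h\la v)\otimes\delta_h$ on $\Omega^1=k(G)\otimes\Lambda^1$ dictates. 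Conditions~(1)-(3) for a Hopf digraph-quiver triple then follow at once: (1) is $h\la\Lambda^1_c\subseteq\Lambda^1_{hch^{-1}}$; (2) is $h\la\omega_c=\omega_{hch^{-1}}$ read through the colouring; and (3) falls out of a short basis expansion together with the fact that the canonical right action $(x\xrightarrow{(i)}y)\ast k=k^{-1}x\xrightarrow{(i)}k^{-1}y$ is a pure translation preserving colour. By construction the quiver calculus attached by Proposition~\ref{kofG-dif-can} reproduces the original calculus, which settles the existence part.

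For the reverse direction, given a Hopf digraph-quiver triple, set $\Lambda^1_g:=k\,{}^eQ_1{}^g$ (arrows based at the identity, graded by target), with left $kG$-action $h\la\alpha:=(h\ast\alpha)\ast h^{-1}$ (which by~(1) lands in $k\,{}^eQ_1{}^{hgh^{-1}}$, and by~(3) together with the right action being a right action really is a left action), and $\omega_c:=(e\xrightarrow{(1)}c)$ if $c\in\bar C$ and $0$ otherwise; $Z_c$-invariance of $\omega_c$ is exactly condition~(2). The two constructions are mutually inverse up to basis choice, so it remains to descend to isomorphism classes. The stated definition of isomorphism of triples (equal $R$ and $\bar C$, a $\ast$-intertwining bigrading-preserving $\varphi$ fixing $\bar Q$-arrows) translates termwise to an isomorphism of $(\Lambda^1,\omega)$ as right $k(G)$-crossed modules sending $\omega_c\mapsto\omega'_c$, which by Lemma~\ref{kofG-dif-lem} is exactly an isomorphism of the underlying calculi. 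The step I expect to need the most care is the consistency of basis choices across a conjugacy class (equivalently, the choice of coset representatives in $G/Z_c$ when extending from a class representative to all of $C$): any two such choices differ by a relabelling of the colours away from $\bar Q_1$, which is precisely the freedom built into the isomorphism $\varphi$, so the bijection passes cleanly to isomorphism classes.
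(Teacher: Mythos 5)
Your proposal is correct and follows essentially the same route as the paper: extract $(\bar C, R)$ and a coloured basis from the crossed-module data of Lemma~\ref{kofG-dif-lem}, transport the module and comodule structures along $\delta_x\otimes v^{(i)}_g\mapsto (x\xrightarrow{(i)}xg)$, and match the two notions of isomorphism. The only cosmetic slip is that you choose bases only for $\Lambda^1_g$ with $g\ne e$, whereas the class $C=\{e\}$ may carry nonzero ramification (loops in $Q$), so a basis of $\Lambda^1_e$ must be chosen as well --- exactly as your reverse construction $\Lambda^1_g:=k\,{}^eQ_1{}^g$ already implicitly does.
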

\proof
Suppose $(\Omega^1,\extd)$ is any bicovariant differential calculus on $k(G).$ By (\ref{fdiff}) and Lemma~\ref{kofG-dif-lem}, without loss of generality, we can assume $\Omega^1=k(G)\tens \Lambda^1$ for some given data $(\Lambda^1=\oplus_{g\in G}\Lambda^1_g,\theta=\sum_{g\in G\setminus\{e\}}\theta_g)$ with $\theta_g\in\Lambda^1_g$ and $\theta_e=0$, where $\extd (\delta_x)=\sum_{g\in G\setminus\{e\}}\delta_{xg^{-1}}\tens\theta_g$ for any $x\in G.$ 

We construct a Hopf digraph-quiver triple $(\bar Q\subseteq Q,*)$ associated to the data $(\Lambda^1,\theta=\sum_{g\in G\setminus\{e\}}\theta_g)$ in Lemma~\ref{kofG-dif-lem} as follows. Firstly we take $\bar C$ to be the union of nontrivial conjugacy classes where $\theta_g\ne 0$, which defines the Cayley digraph $\bar Q=Q(G,\bar C)$.  Next, for any $C\in \mathfrak{C},$ we take $R_C=\dim(\Lambda^1_g)$ whenever some $g\in C$ and let $Q=Q(G,R=\sum_{C\in\mathfrak{C}}R_C)$. We then enumerate the arrows of $kQ_1$ by index $i=1,2,\dots, R_C$ for each $C\in\mathfrak{C}.$ After that,  we choose a basis $\{e^{(i)}_g\}_{i=1,\cdots,R_C}$ for each $\Lambda^1_g$ such that $e_g^{(1)}=\theta_g$ whenever $\theta_g\neq 0$ (or $g\in\bar C$). Finally, the $k(G)$-bimodule map $\varphi: k(G)\tens\Lambda^1\to k Q_1$ sending $\delta_x e^{(i)}_g$ to $x \xrightarrow[]{(i)} xg$ for all $i=1,\cdots,R_C$ transfers $k(G)$-bicomodule, or equivalently, $G$-bimodule structure to $kQ_1.$ More precisely, the right $G$-action $(\delta_x e_g^{(i)})\ast h=\delta_{h^{-1}x}e_g^{(i)}$ induces the canonical right $G$-action on $kQ_1,$ while the left $G$-action of $\Lambda^1$ induces the left $G$-action on $kQ_1,$ i.e., $h\ast (x\xrightarrow[]{(i)} xg)=h\ast\varphi^{-1}(\delta_x e_g^{(i)})=\varphi^{-1}(h\ast\delta_x e_g^{(i)})=\varphi^{-1}(\delta_{xh^{-1}} h\la e_g^{(i)}).$ In particular, $h\ast (x\xrightarrow[]{(1)}xa)
=xh^{-1}\xrightarrow[]{(1)}xah^{-1},$ as $h\la e_a^{(1)}=h\la \theta_a=\theta_{hah^{-1}}=e_{hah^{-1}}^{(1)}$ for any $h\in G,\,a\in\bar C.$ Therefore we have a Hopf digraph-quiver triple $(\bar Q\subseteq Q,\ast)$ associated to a given $(\Lambda^1,\theta).$ One may then verify all the requirements in detail. 
Clearly $k(G)\tens\Lambda^1$ is isomorphic to Hopf quiver calculus $k Q_1$ in Proposition~\ref{kofG-dif-can} under the $k(G)$-Hopf bimodule isomorphism $\varphi$ above.

If two bicovariant differential calculi on $k(G)$ are isomorphic, then the corresponding data in Lemma~\ref{kofG-dif-lem} are isomorphic, say $(\Lambda^1,\theta)\cong(\Lambda^1{}',\theta'),$ this means there is a $G$-graded left $G$-module isomorphism $\phi:\Lambda^1\to \Lambda^1{}'$ such that $\phi(\theta)-\theta'\in\Lambda^1_A=\Lambda^1_e,$ namely $\phi(\theta)=\theta'$ as $\theta_e=0=\theta'_e$.  In fact, the map $\phi$ is induced by the $k(G)$-Hopf bimodule isomorphism $\tilde{\phi}:k(G)\tens\Lambda^1\to k(G)\tens\Lambda^1{}'$ that sends $\delta_x e_g^{(1)}$ to $\delta_x e_g^{(1)}{}'.$ 
Let $(\bar Q\subseteq Q,\ast)$ and $(\bar Q'\subseteq Q',\ast')$ denote the associated triple to data $(\Lambda^1,\theta)$ and $(\Lambda^1{}',\theta')$ respectively. 
By construction, we know $(\bar Q\subseteq Q,\ast)$ and $(\bar Q'\subseteq Q',\ast')$ have equal data $R=\sum_{C\in \mathfrak{C}}R_C$ and $\bar C.$ 
From previous discussion, there are $k(G)$-Hopf bimodule isomorphisms $\varphi:k(G)\tens\Lambda^1\to k Q_1$ and $\varphi':k(G)\tens\Lambda^1{}'\to k Q'_1$
Then $\psi=\varphi'\circ\tilde{\phi}\circ\varphi^{-1}:k Q_1\to k Q'_1$ is a $k(G)$-Hopf bimodule isomorphism and sends inner data to inner data. 
In particular, $\psi$ is a left $G$-module isomorphism and 
$\psi(x\xrightarrow[]{(1)}xa)=\varphi'\circ\phi(\delta_{x} e_a^{(1)})
=\varphi'(\delta_{x}\phi(e_a^{(1)}))
=\varphi'(\delta_x e_a^{(1)}{}'))
=x\xrightarrow[]{(1)'} xa$ for any $x\in G,\,a\in \bar C.$
\endproof

Note that the association of a generalised differential calculus to a Hopf quiver calculus depends on the choice of the basis element of each $\Lambda^1_g.$ But different choices lead to isomorphic differential calculi. Also, if $\Omega^1=k Q_1$ is a bicovariant calculus of  the `digraph-quiver' form  the right $G$-action on arrows of $k Q_1$ may not be a canonical $\ast$. However, our theorem tells it is isomorphic to a canonical one. In particular, the vectors $(e\xrightarrow[]{(i)}x^{-1}y)\ast x^{-1}$ provide a basis of $k{}^xQ_1{}^y$ including $x\xrightarrow[]{(1)}y.$ 
There is a linear transformation that sends that basis of $k{}^x Q_1 {}^y$ to $\{x\xrightarrow[]{(i)} y\}$ our labeled basis.
These linear maps together constitute a $G$-bimodule isomorphism on $kQ_1$ that respects $\theta,$ hence provides the isomorphism to a canonical Hopf quiver calculus in Proposition~\ref{kofG-dif-can}.

The dual theory for codifferentials on $kG$ is strictly parallel except that we do not need $G$ to be finite. In view of this we omit some details as proofs as essentially the arrow-reversal of the ones already given. Recall that the dual  $V^*$ of a right $G$-module $V$ is naturally a left $G$-module via $\<h\la f,v\>=\<f,v\ra h\>$ for any $f\in V^*, v\in V, h\in G.$

\begin{lemma}\label{kG-codif-lem}
Let $G$ be a group and $kG,$ the group Hopf algebra. The first order bicovariant codifferential calculus data $(\Lambda^1,i)$ in (\ref{fcodiff}) is equivalent to 
\begin{itemize}
     \item[(a)] $\Lambda^1$ a $G$-graded right $G$-module $\Lambda^1=\oplus_{g\in G}\Lambda^1_g$ such that $\Lambda^1_g\ra h=\Lambda^1_{h^{-1}gh}$ for any $g,h\in G,$
     \item[(b)] a set of pairs $\{(c,\iota_c)|\, c\in C,\, \iota_c\in {}_{Z_c}(\Lambda^1_c)^*\}_{C\in\mathfrak{C},\,C\neq \{e\}}.$
\end{itemize}
The calculus is always coinner where one can take $\vartheta=\iota_e+\sum_{g\in G\setminus\{e\}} \iota_g\in (\Lambda^1)^*$ with $\iota_g=h\la\iota_c$ whenever $g=hch^{-1}$ for some $h\in G,\,c\in C$ and arbitrary $\iota_e\in(\Lambda^1_e)^*.$
\end{lemma}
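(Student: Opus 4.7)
The plan is to dualise the argument of Lemma~\ref{kofG-dif-lem}. A right $kG$-comodule is the same data as a $G$-grading, so a right $kG$-crossed module structure on $\Lambda^1$ amounts precisely to a $G$-grading $\Lambda^1=\oplus_{g\in G}\Lambda^1_g$ together with a right $G$-action, the crossed module compatibility becoming $\Lambda^1_g\ra h=\Lambda^1_{h^{-1}gh}$. This is condition (a).

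Next I would make the crossed module structure~(\ref{Aplus2}) on $H^+=(kG)^+$ fully explicit using the basis $\{g-1:g\ne e\}$: one computes $(g-1)\ra h=h^{-1}gh-1$ and $\Delta_R(g-1)=\Delta(g-1)-1\tens(g-1)=(g-1)\tens g$, so the induced $G$-grading is concentrated in $(kG)^+_g=k(g-1)$ for $g\ne e$, with $(kG)^+_e=0$. Consequently any crossed module morphism $i:\Lambda^1\to(kG)^+$ must vanish on $\Lambda^1_e$ and, on each $\Lambda^1_g$ with $g\ne e$, takes the form $i(\eta)=\iota_g(\eta)(g-1)$ for a uniquely determined functional $\iota_g\in(\Lambda^1_g)^*$.

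Right $G$-equivariance of $i$ then translates into $\iota_{h^{-1}gh}(\eta\ra h)=\iota_g(\eta)$ for all $\eta\in\Lambda^1_g$, i.e.\ $\iota_{hgh^{-1}}=h\la\iota_g$ under the standard left action $\<h\la f,v\>=\<f,v\ra h\>$ on duals. Fixing a representative $c$ in each nontrivial conjugacy class $C$, the family $\{\iota_g\}_{g\ne e}$ is therefore determined by the single functional $\iota_c$; well-definedness when $hch^{-1}=h'ch'^{-1}$ (so $h'=hu$ with $u\in Z_c$) requires $u\la\iota_c=\iota_c$, i.e.\ $\iota_c\in{}_{Z_c}(\Lambda^1_c)^*$. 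This yields (b), and conversely any such choice produces a consistent family $\{\iota_g\}$ and hence a well-defined morphism $i$.

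For the coinner part, for $\eta\in\Lambda^1_g$ one has $\Delta_R\eta=\eta\tens g$, so
\[\<\vartheta,\eta\z\>\eta\o-\<\vartheta,\eta\>1_H=\<\vartheta,\eta\>(g-1).\]
Hence setting $\vartheta=\iota_e+\sum_{g\ne e}\iota_g$ for any choice of $\iota_e\in(\Lambda^1_e)^*$ (which is free since $\Lambda^1_e$ is sent to $0$) recovers $i$ in coinner form. The only subtle point throughout is keeping track of left versus right action conventions under dualisation; once the grading and bases are made explicit, the verification is routine and the main obstacle --- identifying $(kG)^+$ as a crossed module in a way that makes morphisms into it correspond cleanly to functionals $\iota_g$ --- reduces to the short computation above.
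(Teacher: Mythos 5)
Your proposal is correct and follows essentially the same route as the paper's proof: both reduce condition (a) to the standard equivalence of right $kG$-crossed modules with $G$-graded right $G$-modules, both deduce that $i$ restricted to $\Lambda^1_g$ must be $\eta\mapsto\<\iota_g,\eta\>(g-e)$ (the paper by expanding the comodule-map identity, you by noting that $(kG)^+$ is graded with $(kG)^+_g=k(g-e)$ and $(kG)^+_e=0$, which is the same observation), and both convert equivariance of $i$ into $h\la\iota_g=\iota_{hgh^{-1}}$ and hence into the data (b) via class representatives and $Z_c$-invariance. The coinner verification is likewise identical.
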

\proof It is easy to see that right $kG$-crossed module $\Lambda^1$ corresponds to (a). For the right $G$-crossed module map $i:\Lambda^1\to (kG)^+,$ we write $i(\eta)=\sum_{h\neq e} l(\eta)_h (h-e).$ Suppose $\eta\in\Lambda^1_g$ then that $i$ being a right $G$-comodule map requires $i(\eta)\tens g=\sum_{h\neq e} l(\eta)_h (h-e)\tens h$ or $\sum_{h\neq e}l(\eta)_h(h-e)\tens (g-h)=0,$ this implies $i(\eta)=l(\eta)_g(g-e),$ namely
$i(\eta)=\<\iota_g,\eta\> (g-e)$
for some $\iota_g\in (\Lambda^1_g)^*.$ In particular $\iota_e\in(\Lambda^1_e)^*$ is arbitrary. Hence in general,
\[i(\eta)=\sum_g\<\iota_g,\eta\>(g-e)\]
as $\iota_g\in (\Lambda^1_g)^*$ picks out the $\Lambda^1_g$ component of $\eta.$ The module map property $i(\eta\ra h)=h^{-1} i(\eta) h$ requires $h\la \iota_g=\iota_{hgh^{-1}}$ for any $g,h\in G.$ As in the proof of Lemma~\ref{kofG-dif-lem}, the set $\{\iota_g\}_{g\in G}$ is specified by a subset of values, one for each nontrivial conjugacy class as stated in (b). Let $\vartheta=\sum_{g\in G}\iota_g\in (\Lambda^1)^*$ be the formal sum of linear functions $\iota_g,$ it is an element in the graded dual of $\Lambda^1=\oplus_{g\in G}\Lambda^1_g$. It is clear that $i(\eta)=\<\vartheta,\eta\z\>\eta\o-\<\vartheta,\eta\>e,$ so this codifferential calculus is coinner by (\ref{fdiff}).
\endproof

Here the data $(\Lambda^1,\{(c,\iota_c)\})$ on $kG$ in Lemma~\ref{kG-codif-lem} is equivalent to the data $(\Lambda^{1*},\{(c,\iota^*_c)\})$ on $k(G)$ in Lemma~\ref{kofG-dif-lem} when the group $G$ is finite.  Being dual spaces we canonically have a left-right reversal of group actions. Similarly, let $\bar Q$ be a Cayley graph in form $\bar Q=Q(G,\bar C)$ for some ad-stable subset $\bar C$ of $G$ and $(\bar Q\subseteq Q,\ast)$ a Hopf digraph-quiver triple as above. This time we consider that $G$ acts from the right by $\cdot h=h^{-1}\ast$ (which now restricts to a canonical right action on $\bar Q$) and we remind ourselves by saying that $(\bar Q\subseteq Q,\cdot)$ is right handed. %

\begin{proposition}\label{kG-codif-can}
Let $(\bar{Q}\subseteq Q,\cdot)$ be a Hopf digraph-quiver triple on group $G$ as before, but viewed acting from the right. The associated `quiver codifferential calculus' is bicovariant with
\begin{gather*}
\Omega^1=kQ_1,\quad\Delta_L(x\xrightarrow[]{(i)}y)=x\tens (x\xrightarrow[]{(i)}y),\quad\Delta_R(x\xrightarrow[]{(i)}y)=(x\xrightarrow[]{(i)}y)\tens y\\
g\cdot (x\xrightarrow[]{(i)}y)=gx\xrightarrow[]{(i)} gy,\quad (x\xrightarrow[]{(i)}y)\cdot g= \sum_j \mu_{ij}(x^{-1}y,g)(xg\xrightarrow[]{(j)}yg),
\end{gather*}
with codifferential 
\begin{displaymath} i(x\xrightarrow[]{(i)} y)= \left\{
\begin{array}{ll}
y-x, & \textrm{if $i=1$ and $x^{-1}y\in \bar C$,}\\
0,    & \textrm{otherwise.}
\end{array} \right.
\end{displaymath}
and $\mu_{ij}(c,g)$ are the coefficients of the right action $\cdot$, i.e. $(e\xrightarrow[]{(i)}c)\cdot g=\sum_j\mu_{ij}(c,g)g\xrightarrow[]{(j)}cg.$
\end{proposition}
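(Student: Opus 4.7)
The plan is to mirror the proof of Proposition~\ref{kofG-dif-can} in the right-handed setting, verifying in turn that $\Omega^1=kQ_1$ with the stated left and right $G$-actions and coactions forms a $kG$-Hopf bimodule; that the map $i$ satisfies the super-coLeibniz identity $\Delta\circ i=(i\tens\id)\Delta_R+(\id\tens i)\Delta_L$; and finally that $i$ is a $kG$-bimodule map, making the calculus bicovariant.

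First I would check the Hopf bimodule axioms. The coactions $\Delta_L,\Delta_R$ are manifestly coassociative, counital and mutually commuting: for $\eta=x\xrightarrow{(i)}y$ both $(\id\tens\Delta_R)\Delta_L\eta$ and $(\Delta_L\tens\id)\Delta_R\eta$ equal $x\tens\eta\tens y$. The left action is the canonical shift and is a group action by inspection; the right action is a group action because $\cdot$ is assumed to be one in the (right-handed) triple, which translates into the cocycle identity $\mu_{ik}(c,gh)=\sum_j\mu_{ij}(c,g)\mu_{jk}(c,h)$. The two actions commute by condition (3) of the triple, equivalently the $\mu_{ij}$ depend on $x,y$ only through $c=x^{-1}y$. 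That the coactions are bimodule maps is then direct, as left- or right-shifting endpoints is compatible with the endpoint-grading.

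The super-coLeibniz identity is a quick telescoping check. The only nontrivial case is $\eta=x\xrightarrow{(1)}y$ with $x^{-1}y\in\bar C$, where $\Delta i(\eta)=\Delta(y-x)=y\tens y-x\tens x$ while $(i\tens\id)\Delta_R\eta+(\id\tens i)\Delta_L\eta=(y-x)\tens y+x\tens(y-x)$, and the two sides agree after expansion. For all other arrows both sides vanish.

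The main obstacle is showing $i$ is a $kG$-bimodule map. The left module property $i(g\cdot\eta)=g\cdot i(\eta)$ is immediate, since $g\cdot$ preserves both the label $(1)$ and the condition $x^{-1}y\in\bar C$ (noting $(gx)^{-1}(gy)=x^{-1}y$). For the right module property $i(\eta\cdot g)=i(\eta)g$: on a $\bar Q_1$-arrow $\eta=x\xrightarrow{(1)}y$, condition (2) of the triple forces $\mu_{1j}(c,g)=\delta_{1j}$, so $\eta\cdot g=xg\xrightarrow{(1)}yg$ (in $\bar Q_1$ by ad-stability of $\bar C$) and both sides equal $yg-xg$. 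When $x^{-1}y\notin\bar C$ both sides vanish since $(xg)^{-1}(yg)=g^{-1}(x^{-1}y)g\notin\bar C$ by ad-stability. The delicate remaining case is $\eta=x\xrightarrow{(i)}y$ with $i\ne 1$ but $x^{-1}y=c\in\bar C$: one needs $\mu_{i1}(c,g)=0$ for all $g$, complementary to condition (2). I would establish this by passing through Lemma~\ref{kG-codif-lem}: under the Hopf bimodule identification $\Omega^1=kG\tens\Lambda^1$ with $\Lambda^1$ spanned by arrows from $e$, the map $i$ becomes a right crossed-module map $\iota:\Lambda^1\to(kG)^+$ with $\iota_c=(e^{(1)}_c)^*$ for $c\in\bar C$, and the $Z_c$-invariance of Lemma~\ref{kG-codif-lem}(b) propagated across each conjugacy class via $\iota_{g^{-1}cg}=g^{-1}\triangleright\iota_c$ is precisely the required vanishing.
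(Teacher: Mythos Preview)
Your approach is sound and indeed more careful than the paper's own very brief argument: the paper simply asserts that in the ``otherwise'' case one has $i\bigl((x\xrightarrow{(i)}y)\cdot g\bigr)=0$, without addressing the subcase you isolate. You are right that the crux is the right $kG$-module property of $i$ on an arrow with $i\ne 1$ but $x^{-1}y=c\in\bar C$, and that what is needed there is precisely $\mu_{i1}(c,g)=0$ for all $g$.

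However, your proposed fix via Lemma~\ref{kG-codif-lem} does not close the gap. Setting $\iota_c:=(e^{(1)}_c)^*$, the identity $\iota_{g^{-1}cg}=g^{-1}\la\iota_c$ (equivalently the $Z_c$-invariance together with propagation across the class) is \emph{equivalent} to $\mu_{i1}(c,g)=\delta_{i1}$, not a source from which it can be derived: the triple axiom~(2) only says $e^{(1)}_c\ra g=e^{(1)}_{g^{-1}cg}$, i.e.\ $\mu_{1j}(c,g)=\delta_{1j}$, and preserving a basis vector is not the same as preserving its dual functional. Concretely, on $G=\Z_2$ with $\bar C=\{g\}$ and $R_{\{g\}}=2$ (arrows $\alpha_i:e\to g$, $\beta_i:g\to e$), the right action determined by $\alpha_1\cdot g=\beta_1$, $\beta_1\cdot g=\alpha_1$, $\alpha_2\cdot g=a\beta_1-\beta_2$, $\beta_2\cdot g=a\alpha_1-\alpha_2$ satisfies all three triple axioms for any scalar $a$, yet $i(\alpha_2\cdot g)=a(e-g)\ne 0=i(\alpha_2)\cdot g$ when $a\ne 0$. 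So $\mu_{i1}(c,g)=0$ for $i\ne1$, $c\in\bar C$, is an \emph{additional} hypothesis, not a consequence of the definition of a Hopf digraph-quiver triple. It does hold automatically for the specific triples produced in the proof of Theorem~\ref{kG-codif-thm} (there one has $(e^{(1)}_{h^{-1}ch})^*=\iota_{h^{-1}ch}$ and the crossed-module map property of the given codifferential forces $h\la\iota_{h^{-1}ch}=\iota_c$), which is why the classification theorem is unaffected; but as a free-standing statement about arbitrary triples the proposition needs this extra condition, and neither your patch nor the paper's one-line ``otherwise'' clause supplies it.
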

\proof It is easy to see that under the stated $G$-actions, $kQ_1$ is $kG$-bimodule and $\Delta_L$ and $\Delta_R$ are $kG$-bimodule maps, so $kQ_1$ is $kG$-Hopf bimodule. It suffices to show that $i$ is $kG$-bimodule map. Indeed, $i(g\cdot x\xrightarrow[]{(i)}y)=i(gx\xrightarrow[]{(i)}gy)=gy-gx=g(y-x)=g i(x\xrightarrow[]{(i)}y)$ and $i(x\xrightarrow[]{(i)}y\cdot g)=i(xg\xrightarrow[]{(i)}yg)=yg-xg=(y-x)g=i(x\xrightarrow[]{(i)}y)g$ if $i=1$ and $x^{-1}y\in\bar C.$ Otherwise, $i(g\cdot x\xrightarrow[]{(i)}y)=0=g\cdot i(x\xrightarrow[]{(i)}y)$ and $i(x\xrightarrow[]{(i)}y\cdot g)=0=i(x\xrightarrow[]{(i)}y)\cdot g.$
\endproof

Then analogous to Theorem~\ref{kofG-dif-thm}, we have 

\begin{theorem}\label{kG-codif-thm}
Let $G$ be a group. 
Every first order bicovariant codifferential calculus over $kG$ is isomorphic to one of the canonical forms in Proposition~\ref{kG-codif-can} for some right-handed Hopf digraph-quiver triple. There is a bijection between the set of isomorphism classes of bicovariant codifferential calculi over $kG$ and the set of isomorphism classes of right-handed Hopf digraph-quiver triples.
\end{theorem}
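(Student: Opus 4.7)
The plan is to mirror the proof of Theorem~\ref{kofG-dif-thm}, dualising group actions and coactions throughout and using Lemma~\ref{kG-codif-lem} in place of Lemma~\ref{kofG-dif-lem}. I would first reduce any first order bicovariant codifferential calculus $(\Omega^1,i)$ on $kG$ to a canonical form via~(\ref{fcodiff}) and Lemma~\ref{kG-codif-lem}: without loss of generality $\Omega^1=kG\tens\Lambda^1$ with $\Lambda^1=\oplus_{g\in G}\Lambda^1_g$ a $G$-graded right $G$-module, $\vartheta=\sum_{g\in G\setminus\{e\}}\iota_g\in(\Lambda^1)^*$ with $\iota_g\in(\Lambda^1_g)^*$ equivariant as $h\la\iota_g=\iota_{hgh^{-1}}$, and $i(\eta)=\<\vartheta,\eta\z\>\eta\o-\<\vartheta,\eta\>e$ extended as a left $kG$-module map.

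Next I would construct the associated right-handed Hopf digraph-quiver triple $(\bar Q\subseteq Q,\cdot)$ as follows. Take $\bar C\subseteq G\setminus\{e\}$ to be the union of non-trivial conjugacy classes $C$ on which some $\iota_c\ne 0$; ad-stability follows from equivariance of $\iota$, giving the Cayley digraph $\bar Q=Q(G,\bar C)$. Set $R_C=\dim\Lambda^1_c$ (well-defined on $C$ by Lemma~\ref{kG-codif-lem}(a)) and form the Hopf quiver $Q=Q(G,R)$ with arrows enumerated $1,\ldots,R_C$. For each $g\in G$ I would pick a basis $\{e^{(i)}_g\}_{i=1}^{R_C}$ of $\Lambda^1_g$ such that whenever $\iota_g\ne 0$ the vector $e^{(1)}_g$ is dual to $\iota_g$, namely $\<\iota_g,e^{(1)}_g\>=1$ and $\<\iota_g,e^{(i)}_g\>=0$ for $i>1$. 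The map $\varphi:kG\tens\Lambda^1\to kQ_1$ defined by $\varphi(x\tens e^{(i)}_g)=x\xrightarrow[]{(i)}xg$ is then a linear isomorphism through which I would transport the $kG$-Hopf bimodule structure; this produces the left action $g\cdot(x\xrightarrow[]{(i)}y)=gx\xrightarrow[]{(i)}gy$ and a right action $\cdot$ realising the right $G$-module structure of $\Lambda^1$ in the form of Proposition~\ref{kG-codif-can}.

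The main step is then to verify that $(\bar Q\subseteq Q,\cdot)$ indeed satisfies the axioms of a right-handed Hopf digraph-quiver triple and that $i$ transported along $\varphi$ is exactly the codifferential of Proposition~\ref{kG-codif-can}. The grading and commutativity axioms are automatic from $\varphi$ being a $kG$-Hopf bimodule isomorphism; what needs care is that the restriction of $\cdot$ to $\bar Q_1\subseteq kQ_1$ matches the canonical right action. This reduces to the identity $e^{(1)}_g\ra h=e^{(1)}_{h^{-1}gh}$ for $g\in\bar C$, which I would establish by dualising the equivariance $h\la\iota_g=\iota_{hgh^{-1}}$ from Lemma~\ref{kG-codif-lem}(b) and using the normalisation $\<\iota_g,e^{(1)}_g\>=1$ to pin $e^{(1)}_g\ra h$ down uniquely in $\Lambda^1_{h^{-1}gh}$. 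The identity on the codifferential then follows: $i(e\xrightarrow[]{(1)}a)=\<\iota_a,e^{(1)}_a\>(a-e)=a-e$ for $a\in\bar C$, while every other basis arrow $e\xrightarrow[]{(i)}g$ is annihilated because either $i>1$ or $\iota_g=0$, and the formula on general arrows follows by $kG$-bilinearity.

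For the bijection on isomorphism classes, $R_C$ and $\bar C$ are invariants of the codifferential calculus, recoverable from $\dim\Lambda^1_g$ and from the conjugacy support of $\vartheta$ respectively. Given an isomorphism $\tilde\phi:kG\tens\Lambda^1\to kG\tens\Lambda^1{}'$ of codifferential calculi, it restricts to a $G$-graded $G$-module isomorphism $\phi:\Lambda^1\to\Lambda^1{}'$ with $\vartheta'\circ\phi=\vartheta$, equivalently $\phi(e^{(1)}_g)=e^{(1)}_g{}'$ for $g\in\bar C$; the composition $\psi=\varphi'\circ\tilde\phi\circ\varphi^{-1}$ then supplies the required isomorphism of right-handed triples, and conversely an isomorphism of triples transports through the inverse construction to an isomorphism of canonical calculi. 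The hard part will be the dualised book-keeping — in particular ensuring that the basis choice $e^{(1)}_g$ dual to $\iota_g$ genuinely produces the canonical right action on $\bar Q_1$ — rather than any new conceptual difficulty beyond what was already needed in the proof of Theorem~\ref{kofG-dif-thm}.
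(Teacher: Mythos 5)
Your proposal reproduces the paper's own proof essentially step for step: the reduction to $\Omega^1=kG\tens\Lambda^1$ via (\ref{fcodiff}) and Lemma~\ref{kG-codif-lem}, the extraction of $\bar C$ from the conjugacy support of $\vartheta$ and of $R_C$ from $\dim\Lambda^1_g$, the basis choice $(e^{(1)}_g)^*=\iota_g$, the transport of structure along $\varphi(x\tens e^{(i)}_g)=x\xrightarrow[]{(i)}xg$, and the treatment of isomorphism classes via $\psi=\varphi'\circ\tilde\phi\circ\varphi^{-1}$ are exactly the moves made in the paper's proof of Theorem~\ref{kG-codif-thm}.

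The one point to flag is the step you yourself single out as delicate. The normalisation $\<\iota_g,e^{(1)}_g\>=1$, $\<\iota_g,e^{(i)}_g\>=0$ for $i>1$ does \emph{not} pin $e^{(1)}_g\ra h$ down uniquely inside $\Lambda^1_{h^{-1}gh}$ when $R_C>1$: dualising the equivariance $h\la\iota_{h^{-1}gh}=\iota_g$ only yields $\<\iota_{h^{-1}gh},e^{(1)}_g\ra h\>=\<\iota_g,e^{(1)}_g\>=1$, which determines $e^{(1)}_g\ra h$ merely modulo $\ker\iota_{h^{-1}gh}$. To secure axiom (2) of the triple (that $\cdot$ restricts to the canonical right action on $\bar Q_1$) you must choose the distinguished vectors coherently along each class: fix a representative $c$ of each $C\subseteq\bar C$, pick a $Z_c$-invariant vector $e^{(1)}_c\in(\Lambda^1_c)_{Z_c}$ with $\<\iota_c,e^{(1)}_c\>=1$, and set $e^{(1)}_{h^{-1}ch}:=e^{(1)}_c\ra h$. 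The published proof makes the same assertion $e^{(1)}_a\ra h=e^{(1)}_{h^{-1}ah}$ without further comment, so you are not departing from it; but your stated justification would fail as written for $R_C>1$, and the genuine content is the existence of a $Z_c$-invariant vector pairing nontrivially with the $Z_c$-invariant functional $\iota_c$ (available, e.g., by averaging a preimage over $Z_c$ when $|Z_c|$ is finite and invertible in $k$, but not a formal consequence of Lemma~\ref{kG-codif-lem} alone).
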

\proof Suppose $(\Omega^1,i)$ is a bicovariant codifferential calculus over $kG$ which, without loss of generality, we take as $\Omega^1=kG\tens\Lambda^1$ for some data $(\Lambda^1,\vartheta=\sum_g\iota_g)$ with $\iota_e=0$ in Lemma~\ref{kG-codif-lem}, from (\ref{fdiff}). We construct a right-handed Hopf digraph-quiver triple $(\bar Q\subseteq Q,\cdot)$ associated to such data as follows. Firstly we take $\bar C$ to be the union of nontrivial conjugacy classes where $\iota_g\neq 0$, take $R_C=\dim\Lambda^1_g$ for some $g\in G$ for each conjugacy class $C$, and let $\bar Q=Q(G,\bar C),\, Q=Q(G,R).$ We colour $Q$ and embed $\bar Q\subseteq Q$ by marking all arrows in $\bar Q$ with $1$. We then choose a basis $\{e_g^{(i)}\}$ for each $\Lambda^1_g$ such that $(e_g^{(1)})^*=\iota_g$ whenever $\iota_g$ is nonzero. Similar to the proof of Theorem~\ref{kofG-dif-thm}, the $kG$-bicomodule map $\varphi:kG\tens\Lambda^1\to kQ_1$ sending $x\tens e_g^{(i)}$ to $x\xrightarrow[]{(i)} xg$ transfers $kG$-bimodule structure on $kG\tens\Lambda^1$ (or right $G$-module structure on $\Lambda^1$) to a $kG$-bimodule structure on $kQ_1.$ Precisely, one can see the left $G$-action is the canonical one and $(x\xrightarrow[]{(i)}xg) \cdot h$ is defined to be $\varphi^{-1}(xh\tens e_g^{(i)}\ra h).$ In particular, $(x\xrightarrow[]{(1)}xa) \cdot h=xh\xrightarrow[]{(1)}xah$ as $e_a^{(1)}\ra h=e_{h^{-1}ah}^{(1)}$ for any $a\in\bar C.$ One may then verify all the requirements on $(\bar Q\subseteq Q,\cdot)$ being a desired triple and see that calculus $kG\tens\Lambda^1$ is isomorphic to quiver codifferential calculus $kQ_1$ under the map $\varphi$ above.

If two bicovariant codifferential calculi on $kG$ are isomorphic, then the corresponding data in Lemma~\ref{kG-codif-lem} are isomorphic, say $(\Lambda^1,\vartheta)\cong(\Lambda^1{}',\vartheta'),$ this means there is a $G$-graded right $G$-module isomorphism $\phi:\Lambda^1\to \Lambda^1{}'$ such that $\vartheta=\vartheta'\circ \phi.$  In fact, the map $\phi$ is induced by the $kG$-Hopf bimodule isomorphism $\tilde{\phi}:kG\tens\Lambda^1\to kG\tens\Lambda^1{}'$ that sends $x\tens e_g^{(1)}$ to $x\tens e_g^{(1)}{}'.$ 
Let $(\bar Q\subseteq Q,\cdot)$ and $(\bar Q'\subseteq Q',\cdot')$ denote the associated triple to data $(\Lambda^1,\vartheta)$ and $(\Lambda^1{}',\vartheta')$ respectively. 
By construction, we know those triples have equal data $R=\sum_{C\in \mathfrak{C}}R_C$ and $\bar C.$ 
Also, the previous discussion shows that there are $kG$-Hopf bimodule isomorphisms $\varphi:kG\tens\Lambda^1\to kQ_1$ and $\varphi':kG\tens\Lambda^1{}'\to kQ{}'_1$
Then $\psi=\varphi'\circ\tilde{\phi}\circ\varphi^{-1}:kQ_1\to kQ{}'_1$ is a $kG$-Hopf bimodule isomorphism and sends coinner data to coinner data.  
In particular, $\psi$ is a left $G$-module isomorphism and 
$\psi(x\xrightarrow[]{(1)}xa)=\varphi'\circ\phi(x\tens e_a^{(1)})
=\varphi'(x\tens\phi(e_a^{(1)}))
=\varphi'(x\tens e_a^{(1')})
=x\xrightarrow[]{(1')} xa$ for any $x\in G,\,a\in \bar C.$ This shows that the associated triples are isomorphic.
\endproof

The association of a bicovariant codifferential calculus to a  Hopf digraph-quiver depends on the choice of the basis of each $\Lambda^1_g$, but different choices lead to isomorphic triples. If $kQ_1$ is a bicovariant codifferential calculus over $kG$ that is of quiver form. Though the right $G$-action on arrows of $kQ_1$ may not be the canonical one, our theorem tells that it is isomorphic to a canonical one. In particular, the vectors $x\cdot (e\xrightarrow[]{(i)} x^{-1}y)$ provide a basis of ${}^x kQ_1 {}^y$ including $x\xrightarrow[]{(1)}y,$ this means there is a linear isomorphism sends that basis of ${}^x kQ_1 {}^y$ to some $\{x\xrightarrow[]{(i)'} y\}$ basis.

\subsection{Differentials on $kG$ and codifferentials on $k(G)$}

In this section we focus on differentials on $kG$, and codifferentials on $k(G)$ when $G$ is finite. 

\begin{lemma}\label{kG-dif-lem} Let $G$ be a group and $A=kG$ the group algebra. The data $(\Lambda^1,\omega)$ in (\ref{fdiff}) for a bicovariant calculus is equivalent to 
\begin{itemize}
\item[(a)] $\Lambda^1$ is $G$-graded $\Lambda^1=\oplus_{g\in G}\Lambda^1_g$ and such that $\Lambda^1_g\ra h=\Lambda^1_{h^{-1}gh}$ for any $g,h\in G$ and
\item[(b)] $\zeta\in Z^1(G,\Lambda^1_e)$ a group cocycle in the sense $\zeta_{gh}=\zeta_g\ra h+\zeta_h$
\end{itemize}
which is inner when $\zeta$ is exact, i.e.. $\zeta_g=\theta\ra g-\theta$. Inner calculi are given by $[\theta]\in \Lambda^1_e/(\Lambda^1_e)_G.$ If the group $G$ is finite and $|G|$ is invertible in $k$, then any bicovariant calculus $\Omega^1$ on $kG$ is inner with $\theta=-|G|^{-1}\sum_{g\in G}\zeta_g$.
\end{lemma}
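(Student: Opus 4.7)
The plan is to unpack the general recipe (\ref{fdiff}) in the special case $A=kG$, using the right regular action and right adjoint coaction on $A^+$ from (\ref{Aplus1}). Part (a) is standard: a right $kG$-crossed module is the same thing as a $G$-graded right $G$-module with the compatibility $\Lambda^1_g\ra h=\Lambda^1_{h^{-1}gh}$, by Peter--Weyl-type reasoning (right $kG$-comodules are $G$-graded vector spaces, then the crossed module axiom spelled out on homogeneous elements gives the stated constraint).

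For part (b), the key observation is that the right adjoint coaction on $(kG)^+$ collapses dramatically because $kG$ is cocommutative. A direct Sweedler calculation gives, for $a=g-e$, the triple Sweedler expansion $(g,g,g)-(e,e,e)$, from which $\Delta_R(g-e)=g\tens g^{-1}g-e\tens e=(g-e)\tens e$, so $(kG)^+$ lives entirely in degree $e$ for this coaction. A crossed module morphism $\omega\colon (kG)^+\to\Lambda^1$ must therefore land in $\Lambda^1_e$, and if we set $\zeta_g:=\omega(g-e)\in\Lambda^1_e$ with $\zeta_e=0$, then the right-module property $\omega((g-e)\ra h)=\omega(g-e)\ra h$ (using $(g-e)\ra h=gh-h=(gh-e)-(h-e)$) unpacks as $\zeta_{gh}-\zeta_h=\zeta_g\ra h$, which is the group $1$-cocycle condition. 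Conversely, any such $\zeta$ extends linearly to a crossed module morphism, giving the stated equivalence.

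For innerness, suppose the calculus is inner with $\omega(a)=\theta\ra a$ for some $\theta=\sum_k\theta_k\in\Lambda^1$. Since $\omega(g-e)\in\Lambda^1_e$ and the degree-$k$ component of $\theta\ra(g-e)$ is $\theta_{gkg^{-1}}\ra g-\theta_k$, all components with $k\neq e$ must cancel independently, so we may WLOG replace $\theta$ by $\theta_e\in\Lambda^1_e$ without changing $\omega$. Then innerness becomes exactly $\zeta_g=\theta\ra g-\theta$ with $\theta\in\Lambda^1_e$, i.e. $\zeta$ is an exact $1$-cocycle; two such $\theta,\theta'$ produce the same $\zeta$ iff $\theta-\theta'$ is a right $G$-invariant in $\Lambda^1_e$, so inner calculi are parametrised by $[\theta]\in\Lambda^1_e/(\Lambda^1_e)_G$. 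For the final claim, when $|G|$ is invertible I would verify that $\theta:=-|G|^{-1}\sum_{g}\zeta_g\in\Lambda^1_e$ is a bounding cochain by the standard averaging computation: using the cocycle identity $\zeta_g\ra h=\zeta_{gh}-\zeta_h$ and reindexing the sum $g\mapsto gh$, one finds $\theta\ra h-\theta=\zeta_h$. The only mildly subtle point is the reduction $\theta\rightsquigarrow\theta_e$ in the innerness analysis; once the coaction-collapse $\Delta_R(g-e)=(g-e)\tens e$ is in hand, the rest is bookkeeping.
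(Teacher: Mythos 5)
Your proposal is correct and follows essentially the same route as the paper: identify crossed modules with $G$-graded right $G$-modules, observe that the adjoint coaction on $(kG)^+$ is trivial (so $\omega$ lands in $\Lambda^1_e$ and the module-map property becomes the cocycle condition), reduce an inner $\theta$ to its $\Lambda^1_e$ component by noting the remaining part is $G$-invariant and hence acts by zero on $A^+$, and conclude with the standard averaging argument for finite $G$. The only cosmetic difference is that the paper delegates the left-covariant part to \cite[Proposition~2.11]{MaTao1} and phrases the reduction of $\theta$ via the general inner-bicovariance condition on $\Delta_R\theta-\theta\tens 1$, whereas you derive both directly; the content is the same.
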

\proof It was already shown in \cite[Proposition~2.11]{MaTao1} that a left covariant calculus $\Omega^1(kG)$ corresponds to a right $G$-module $\Lambda^1$ and $\zeta\in Z^1(G,\Lambda^1)$ where  $\zeta_g:=\omega(g-e).$  The additional information for the bicovariant case is that $\Lambda^1$ is a crossed moduele and $\zeta$ has its image in $\Lambda^1_e$ for $\omega$ to be a $kG$-module map (because the crossed module coaction on $(kG)^+$ is trivial as the Hopf algebra is cocommutative.  From the discussion after (\ref{fdiff}), we know the data for an inner calculus is $\theta=\sum_{h\in G} \theta_h$ where $\theta_h\in \Lambda^1_h$ such that $\theta_h=\theta_{ghg^{-1}}\ra g$ for all $g\in G,\,h\in G\setminus\{e\}$ for the bicovariance condition. This means that it is given by a set of pairs $\{(c,\theta_c)|\,c\in C,\,\theta_c\in (\Lambda^1_c)_{Z_c}\}_{C\in\mathfrak{C},\,C\neq \{e\}}$ and a free value $\theta_e.$  However, any two inner data correspond to the same derivation, say  $\extd g=[\theta, g]=[\theta',g]$ for any $g\in G,$ if and only if $\theta-\theta'\in \Lambda^1_G.$ So we can take $\theta\in\Lambda^1_e$ for an inner  calculus, as $\sum_{h\ne e}\theta_h\in\Lambda^1_G$ by a change of variables in the sum so that the relevant data is $[\theta]\in \Lambda^1_e/(\Lambda^1_e)_G.$ Finally,  if the group $G$ is finite and $|G|$ is invertible in $k,$ we have $\zeta_g=\theta\ra g-\theta$ for the stated data $\theta$ due to the cocycle condition.   It is interesting to note that after a left-right reversal we have identical data $(\Lambda^1,\theta)$ for an inner bicovariant calculus on $k G$ and $k(G)$ according to the proof here and Lemma~\ref{kofG-dif-lem} except that now only the part in $\Lambda^1_e$ is significant while before only the part not in $\Lambda^1_e$ was significant. \endproof

From Lemma~\ref{kG-dif-lem}, we know that two inner generalised bicovariant differential calculi are isomorphic if and only if there is a $G$-graded right $G$-module isomorphism $\varphi:\Lambda^1\to \Lambda^1{}'$ between the corresponding data $(\Lambda^1,[\theta])$ and $(\Lambda^1{}',[\theta'])$ such that $\varphi(\theta)-\theta'\in (\Lambda^1_e)_G.$ We are now ready to interpret the inner generalised bicovariant differential calculi on $kG$ in terms of Hopf quivers. Let $Q=Q(G,R)$ be a coloured Hopf quiver. This time, we consider $kQ_1$ as having a canonical left $G$-action, namely $h\ast x\xrightarrow[]{(i)} y=hx\xrightarrow[]{(i)} hy$ for any $h\in G$ where $x\xrightarrow[]{(i)}y$ runs all arrows in of $Q_1,$
and define a right $G$-action $\ast$ on $kQ_1$ such that
\begin{enumerate}
    \item ${}^xkQ_1{}^y\ast h={}^{xh}kQ_1{}^{yh}$ for all $h,x,y\in G$;
    \item $\ast$ commutes with the canonical left $G$-action on $kQ_1$.
\end{enumerate} 
Note that the right $G$-action $\ast$ together with canonical left $G$-action on $kQ_1$ defines a right $G$-action $\ra$ on ${}^e kQ_1{}^e$ by conjugation
\[(e\xrightarrow[]{(i)} e)\,\ra h=h^{-1}\ast (e\xrightarrow[]{(i)} e)\ast h,\,i=1,\dots,R_{\{e\}},\,\forall\,h\in G.\]

\begin{proposition}\label{kG-dif-can}
Associated to data $(Q=Q(G,R),\ast)$ defined above on a group $G$ is an inner bicovariant differential calculus on $kG.$ Its structure is
\begin{gather*}
\Omega^1=kQ_1,\quad \Delta_L(x\xrightarrow[]{(i)}y)=x\tens (x\xrightarrow[]{(i)}y),\quad \Delta_R(x\xrightarrow[]{(i)}y)=(x\xrightarrow[]{(i)}y)\tens y,\\
h\ast (x\xrightarrow[]{(i)}y)= hx\xrightarrow[]{(i)} hy,\quad (x\xrightarrow[]{(i)}y)\ast h\in {}^{xh} kQ_1 {}^{yh},\\
\extd g=g\ast \left((e\xrightarrow[]{(1)} e)\ra g\right)- (g\xrightarrow[]{(1)}g),\textit{ i.e., }\extd g=[\theta,g]\textit{ with }\theta= e\xrightarrow[]{(1)}e,
\end{gather*}
for any $i=1,2,\dots,R_C$ whenever $R_C\neq 0$ and  $x^{-1}y\in C$ for some $C\in\mathfrak{C}$ and for all $g,h\in G.$
\end{proposition}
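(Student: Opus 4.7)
The plan is to apply Lemma~\ref{kG-dif-lem} by extracting the correct data $(\Lambda^1,\zeta)$ from the triple $(Q=Q(G,R),\ast)$ and then exhibiting a $kG$-Hopf bimodule isomorphism between the abstract calculus $kG\tens\Lambda^1$ and the explicit quiver model $kQ_1$. First I would take $\Lambda^1:={}^e kQ_1=\bigoplus_{g\in G}k({}^eQ_1{}^g)$, graded by $\Lambda^1_g=k({}^eQ_1{}^g)$ (so $\dim\Lambda^1_g=R_C$ when $g\in C$), and define the right $G$-action by $\eta\ra h:=h^{-1}\ast(\eta\ast h)$ for $\eta\in\Lambda^1$. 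Property (1) of $\ast$ guarantees $\Lambda^1_g\ra h\subseteq\Lambda^1_{h^{-1}gh}$, which is condition (a) of Lemma~\ref{kG-dif-lem}; property (2) (commutation of left and right $\ast$) together with the fact that $\ast$ is a right action ensures $\ra$ is a genuine right $G$-action. In particular, the element $\theta=e\xrightarrow[]{(1)}e\in\Lambda^1_e$ satisfies $\theta\ra h\in\Lambda^1_e$, so I can set $\zeta_g:=\theta\ra g-\theta$; this is a coboundary, hence tautologically a 1-cocycle $\zeta\in Z^1(G,\Lambda^1_e)$, realising condition (b).

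Next I would invoke Lemma~\ref{kG-dif-lem} to produce the abstract bicovariant calculus $(\Omega^1=kG\tens\Lambda^1,\extd)$ with $\extd g=g\tens\zeta_g$, and transfer it to $kQ_1$ via the linear map $\varphi:kG\tens\Lambda^1\to kQ_1$, $g\tens\eta\mapsto g\ast\eta$. Since $kQ_1=\bigoplus_{x\in G}{}^xkQ_1$ and each ${}^xkQ_1=x\ast\Lambda^1$, this is a bijection. Under $\varphi$ the left $kG$-module structure on $kG\tens\Lambda^1$ becomes the canonical left $\ast$ on $kQ_1$, while the right structure $(a\tens\eta)\cdot h=ah\tens\eta\ra h$ becomes $g\ast\eta\mapsto gh\ast(h^{-1}\ast(\eta\ast h))=g\ast\eta\ast h$, i.e.\ the given right $\ast$. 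The coactions $\Delta_L,\Delta_R$ inherited from $\Delta$ of $kG$ translate to the stated source/target coactions $x\xrightarrow[]{(i)}y\mapsto x\tens(x\xrightarrow[]{(i)}y)$ and $(x\xrightarrow[]{(i)}y)\tens y$, which one checks directly are $kG$-bimodule maps using property (1).

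Finally I would transport the differential: $\extd g=g\tens\zeta_g=g\tens(\theta\ra g)-g\tens\theta$ maps to $g\ast(\theta\ra g)-g\ast\theta=g\ast(\theta\ra g)-(g\xrightarrow[]{(1)}g)$, which is the displayed formula. The innerness $\extd g=[\theta,g]$ with $\theta=e\xrightarrow[]{(1)}e$ then follows because under $\varphi$ the left multiplication $g\cdot\theta$ is $g\ast\theta=g\xrightarrow[]{(1)}g$ and the right multiplication $\theta\cdot g$ is $g\ast(\theta\ra g)$, so $[\theta,g]=\theta\cdot g-g\cdot\theta$ reproduces $\extd g$ exactly.

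The routine parts are the verifications that $\Delta_{L,R}$ are bimodule maps and that $\extd$ intertwines the regular coaction of $kG$ with $\Delta_{L,R}$, both of which are direct consequences of the formulas for the coactions together with the group structure. The main subtlety, and the place where the axioms on $\ast$ are really used, is checking that the right action $\ra$ on $\Lambda^1$ is well-defined and respects the conjugation grading: this uses property (1) to keep $\eta\ra h$ inside ${}^eQ_1$ and to shift the target grade from $g$ to $h^{-1}gh$, and property (2) to make $\ra$ an action rather than a mere twisted action. Once this is in place, the remaining checks reduce to unpacking definitions.
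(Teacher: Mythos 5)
Your proof is correct, but it takes a different route from the paper's. The paper proves the proposition by direct verification on $kQ_1$: it observes that $\Delta_L,\Delta_R$ are automatically left $kG$-module maps and bicomodule structure maps, that they are right $kG$-module maps precisely because of property (1) of $\ast$ (namely ${}^xkQ_1{}^y\ast h\subseteq{}^{xh}kQ_1{}^{yh}$), and that $\extd=[\theta,\,]$ is a bicomodule map because $\extd g=g\ast\bigl((e\xrightarrow[]{(1)}e)\ra g\bigr)-(g\xrightarrow[]{(1)}g)$ lies entirely in ${}^gkQ_1{}^g$, so both coactions act on it diagonally by $g$. You instead extract the abstract data $\Lambda^1={}^ekQ_1$ with the conjugation action $\eta\ra h=h^{-1}\ast(\eta\ast h)$ and the exact cocycle $\zeta_g=\theta\ra g-\theta$, invoke Lemma~\ref{kG-dif-lem} to obtain the calculus on $kG\tens\Lambda^1$, and transport it through $\varphi(g\tens\eta)=g\ast\eta$. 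This is essentially the proof of Theorem~\ref{kG-dif-thm} run in the converse direction (your $\varphi$ is the same isomorphism $x\tens f_g^{(i)}\mapsto x\xrightarrow[]{(i)}xg$ used there), so your argument buys a head start on the classification theorem and makes the proposition a corollary of the general framework (\ref{fdiff}), at the cost of being less self-contained than the paper's short hands-on check. All the individual steps you give — that property (1) makes $\ra$ respect the conjugation grading, that commutation of the two $\ast$-actions makes $\ra$ a genuine action, and that $\varphi$ intertwines the module, comodule and differential structures — check out, with the implicit standing assumption (shared by the statement itself) that $R_{\{e\}}\ge 1$ so that the loop $e\xrightarrow[]{(1)}e$ exists.
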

\proof It is obviously that $kQ_1$ is a $kG$-bimodule,  $\Delta_L,\Delta_R$ make $kQ_1$ a $kG$-bicomodule and $\Delta_L,\Delta_R$ are left $kG$-module maps. It remains to show that $\Delta_L,\Delta_R$ are right $kG$-module maps and $\extd$ is $G$-bicomodule map. Indeed, $\Delta_L,\Delta_R$ are right $kG$-module map if and only if ${}^xkQ_1{}^y\ast h={}^{xh}kQ_1{}^{yh}$ for all $h,x,y\in G$. As $\extd g=g\ast \left((e\xrightarrow[]{(1)} e)\ra g\right)- g\xrightarrow[]{(1)}g$ and $g\ast \left((e\xrightarrow[]{(1)} e)\ra g\right) \in {}^g kQ_1 {}^g,$ we know that $\Delta_L\extd g=g\tens g\ast \left((e\xrightarrow[]{(1)} e)\ra g\right)- g\tens (g\xrightarrow[]{(1)}g)$ $=g\tens\extd g$ and $\Delta_R\extd g=g\ast \left((e\xrightarrow[]{(1)} e)\ra g\right)\tens g- (g\xrightarrow[]{(1)}g)\tens g=\extd g\tens g$ for any $g\in G.$ This means $\extd=[\theta,\ ]$ is $kG$-bicomodule map.\endproof

We say two data pairs $(Q=Q(G,R),\ast)$ and $(Q'=Q(G,R'),\ast')$ are isomorphic if $R=R'$ and there exists a linear isomorphism $\varphi:kQ_1\to kQ'_1$ that $\varphi({}^x kQ_1 {}^y)={}^x kQ'_1 {}^y$ and $\varphi$ intertwines $\ast,\ast'$ and such that
$\varphi \left((e\xrightarrow[]{(1)} e)\ra (h-e)\right)=(e\xrightarrow[]{(1)'} e)\ra'\, (h-e)$ for any $h\in G.$

\begin{theorem}\label{kG-dif-thm}
Let $G$ be a group. Any inner  bicovariant differential calculus on $kG$ is isomorphic to a canonical form in Proposition~\ref{kG-dif-can} for some data $(Q=Q(G,R),\ast)$. There is a bijection between the set of isomorphism classes inner  bicovariant differential calculi and the set of isomorphism classes of data $(Q=Q(G,R),\ast).$
\end{theorem}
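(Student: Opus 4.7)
The strategy is to parallel the proofs of Theorems~\ref{kofG-dif-thm} and \ref{kG-codif-thm}, using Lemma~\ref{kG-dif-lem} to convert an inner bicovariant generalised differential calculus on $kG$ into data $(\Lambda^1,[\theta])$ where $\Lambda^1=\oplus_{g\in G}\Lambda^1_g$ is a $G$-graded right $G$-module satisfying $\Lambda^1_g\ra h=\Lambda^1_{h^{-1}gh}$ and $[\theta]\in\Lambda^1_e/(\Lambda^1_e)_G$. Fix a representative $\theta$.

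To produce the canonical form, I would first set $R_C:=\dim\Lambda^1_g$ for any $g\in C$, which is independent of the choice of $g$ because $\ra h$ yields a linear isomorphism $\Lambda^1_g\isom\Lambda^1_{h^{-1}gh}$. Form the Hopf quiver $Q=Q(G,R)$ with ramification $R=\sum_{C\in\mathfrak{C}}R_C C$, and colour its arrows $x\xrightarrow[]{(i)}xc$ by $i=1,\dots,R_C$, $c\in C$. Next choose a basis $\{e_g^{(i)}\}$ of each $\Lambda^1_g$, arranging that $e_e^{(1)}=\theta$ when $\theta\neq 0$. Then $\varphi:kG\tens\Lambda^1\to kQ_1$, $x\tens e_g^{(i)}\mapsto x\xrightarrow[]{(i)}xg$, is a bijection compatible with the canonical $kG$-bicomodule and left $kG$-module structures on both sides, and the right $kG$-module structure on $kG\tens\Lambda^1$ transfers along $\varphi$ to a right action $\ast$ on $kQ_1$ determined by $(x\xrightarrow[]{(i)}xg)\ast h:=\varphi(xh\tens(e_g^{(i)}\ra h))$. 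A direct check shows ${}^xkQ_1{}^y\ast h={}^{xh}kQ_1{}^{yh}$ and that $\ast$ commutes with the canonical left action, so $(Q,\ast)$ is valid data and $\varphi$ is an isomorphism of $kG$-Hopf bimodules. Moreover $\varphi(\theta)=e\xrightarrow[]{(1)}e$, so the inner derivation $\extd g=[\theta,g]$ is carried to $\extd g=[e\xrightarrow[]{(1)}e,\,g]$, matching Proposition~\ref{kG-dif-can}.

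For the bijection on isomorphism classes, Lemma~\ref{kG-dif-lem} tells us that an isomorphism between two inner bicovariant calculi corresponds to a $G$-graded right $G$-module isomorphism $\phi:\Lambda^1\to\Lambda^1{}'$ with $\phi(\theta)-\theta'\in(\Lambda^1_e{}')_G$. Lifting $\phi$ to $\tilde\phi:kG\tens\Lambda^1\to kG\tens\Lambda^1{}'$ and composing with the canonical maps above gives a $kG$-Hopf bimodule isomorphism $\psi=\varphi'\circ\tilde\phi\circ\varphi^{-1}:kQ_1\to kQ'_1$ that intertwines $\ast$ with $\ast'$; conversely, any such $\psi$ restricts to an isomorphism of the underlying module data. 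The main obstacle I expect is handling the freedom in $[\theta]$: changing the representative $\theta$ by $\tau\in(\Lambda^1_e)_G$ or choosing a different basis vector as $e_e^{(1)}$ must be absorbed into a bimodule automorphism of $kQ_1$ that still satisfies the axioms for $\ast$ and maps $(e\xrightarrow[]{(1)}e)\ra(h-e)$ correctly, so that the induced equivalence on triples is exactly the one built into the definition $\varphi\bigl((e\xrightarrow[]{(1)}e)\ra(h-e)\bigr)=(e\xrightarrow[]{(1)'}e)\ra'(h-e)$. Once this bookkeeping is verified, the remainder is a routine mirror of the arguments already carried out for $k(G)$ in Theorem~\ref{kofG-dif-thm} and for codifferentials on $kG$ in Theorem~\ref{kG-codif-thm}.
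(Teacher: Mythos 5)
Your proposal follows essentially the same route as the paper: reduce via Lemma~\ref{kG-dif-lem} to data $(\Lambda^1,[\theta])$, fix a representative and a basis with $e_e^{(1)}=\theta$, transfer the structure along $\varphi:x\tens e_g^{(i)}\mapsto x\xrightarrow[]{(i)}xg$ to get $(Q(G,R),\ast)$, and observe that the notion of isomorphism of data pairs is designed (via the condition on $(e\xrightarrow[]{(1)}e)\ra(h-e)$) to absorb the freedom in the representative of $[\theta]$ and the basis choice. This matches the paper's argument, and your extra care on the bijection step is consistent with how the analogous Theorems~\ref{kofG-dif-thm} and~\ref{kG-codif-thm} are treated.
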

\proof Without loss of generality, we can assume any inner bicovariant differential calculus on $kG$ is in form $\Omega^1=kG\tens \Lambda^1$ for data $(\Lambda^1,[\theta])$ given in Lemma~\ref{kG-dif-lem}, i.e. $\Lambda^1=\oplus_{g\in G}\Lambda^1_g$ a $G$-graded right $G$-module such that $\Lambda^1_g\ra h=\Lambda^1_{h^{-1}gh}$ for any $g,h\in G$ and $[\theta]\in \Lambda^1_e/(\Lambda^1_e)_G.$ 

We now construct a data pair $(Q=Q(G,R),\ast)$ from $(\Lambda^1,[\theta])$ as follows. Firstly, we take $R=\sum_{C\in \mathfrak{C}}R_C C$ with $R_C=\dim \Lambda^1_g$ for some $g\in C,\,C\in\mathfrak{C}$ and define coloured Hopf quiver $Q=Q(G,R=\sum_{C\in \mathfrak{C}}R_CC).$ The data $R_C$ are well-defined as the dimensions of $\Lambda^1_g$'s are constant on each conjugacy class. 
Secondly, for each $g\in C,\,C\in \mathfrak{C}$ with $R_C\neq 0,$ we choose a basis $\{f_g^{(i)}\}_{i=1}^{R_C}$ for $\Lambda^1_g.$ In particular, the basis $\{f_e^{(i)}\}_{i=1}^{R_{\{e\}}}$ of $\Lambda^1_e$ is chosen in the way that $f_e^{(1)}=\theta,$ any fixed representative for $[\theta]\in \Lambda^1_e/(\Lambda^1_e)_G.$ Thirdly, noting that choice of basis elements provides a $kG$-bicomodule isomorphism $\varphi:kG\tens\Lambda^1\to kQ_1$ by sending $x\tens f_g^{(i)}$ to arrow $x\xrightarrow[]{(i)}xg,$ the map $\varphi$ then transfers the $kG$-bimodule structure on $\Omega^1=kG\tens\Lambda^1$ to $kQ_1.$ More precisely,  the left $G$-action $h.(x\tens f_g^{(i)})=hx\tens f_g^{(i)}$ determines the canonical left $G$-action on $kQ_1$, i.e., $h\ast (x\xrightarrow[]{(i)}xg)=hx\xrightarrow[]{(i)}hxg,$ while the right $G$-action on $kG\tens\Lambda^1$, or equivalently, the right $G$-action $\ra$ on $\Lambda^1$ determines the right $G$-action on $kQ_1$ by
$(x\xrightarrow[]{(i)}xg)\ast h=\varphi^{-1}(xh\tens (f_g^{(i)}\ra h)).$ Therefore one can verify that we now have a desired data pair $(Q=Q(G,R),\ast)$. 
Clearly one can check that $\Omega^1=kG\tens \Lambda^1$ is isomorphic to the inner generalised bicovariant differential calculus $kQ_1$ associated to this data $(Q=Q(G,R),\ast)$ in Proposition~\ref{kG-dif-can} under the map $\varphi.$
The bijection on isoclasses is clear as two data are defined to be isomorphic in the aim of the associated calculi are isomorphic. 
\endproof

The association of an inner differential calculus on $kG$ to a Hopf quiver calculus in Theorem~\ref{kG-dif-thm}  depends on the choice of the basis elements of each $\Lambda^1_g,$ hence it is not in a unique way. However, different choices lead to isomorphic differential calculi. And as we remarked before, the standard sub-calculus $\bar\Omega^1$ of a generalised differental calculus $\Omega^1$ in Proposition~\ref{kG-dif-can} is not always inner, hence may not be associated to a coloured Hopf quiver. The following  general construction on a finite group algebra provides a canonical example.

\begin{lemma}\label{CG} Let $G$ be any group and $V$  any representation, with structure map  $\rho: G\to \End(V)$ extended linearly to $\kk G$. Let $\Lambda^1=\End(V)$ be a right-module by right multiplication via $\rho$. Then $\zeta(x)=\rho(x)-\id$ is a cocycle and we have an inner calculus with $\theta=\id\in\Lambda^1$. 
\end{lemma}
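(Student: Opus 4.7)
The plan is to match the data $(V,\rho)$ against the characterisation of bicovariant differential calculi on $kG$ provided by Lemma~\ref{kG-dif-lem}. First I would place the entire right $G$-module $\Lambda^1 = \End(V)$ in degree $e$, so that the $G$-grading is trivial with $\Lambda^1_e = \End(V)$ and $\Lambda^1_g = 0$ for $g\ne e$. Then the crossed-module compatibility in item (a) of that lemma, $\Lambda^1_g\ra h=\Lambda^1_{h^{-1}gh}$, holds vacuously, and any right-module 1-cochain automatically has image in $\Lambda^1_e$, which is the constraint required in item (b).

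Next I would verify the cocycle identity $\zeta_{gh}=\zeta_g\ra h+\zeta_h$ by a direct computation using the right action $T\ra h = T\rho(h)$ and the multiplicativity of $\rho$:
\[
\zeta_g\ra h + \zeta_h = (\rho(g)-\id)\rho(h) + (\rho(h)-\id) = \rho(g)\rho(h)-\id = \rho(gh)-\id = \zeta_{gh}.
\]
So $\zeta \in Z^1(G,\Lambda^1_e)$ and Lemma~\ref{kG-dif-lem} yields a bicovariant calculus from this data.

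Finally, innerness with $\theta=\id$ is just the remark that $\theta \in \Lambda^1_e$ and
\[
\theta\ra g - \theta = \id\cdot\rho(g) - \id = \rho(g)-\id = \zeta_g,
\]
so $\zeta$ is the coboundary of $\id$. By the innerness criterion in Lemma~\ref{kG-dif-lem} the associated calculus is inner with inner element $\theta = \id$. There is no real obstacle: the entire content of the lemma is the observation that $\rho-\id$ is the coboundary of $\id\in\End(V)$ in $Z^1(G,\End(V))$ with respect to the post-composition right $G$-action.
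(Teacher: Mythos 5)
Your proposal is correct and follows essentially the same route as the paper: both identify the data with the characterisation in Lemma~\ref{kG-dif-lem} by placing all of $\End(V)$ in the single graded component $\Lambda^1_e$ and observing that $\zeta=\rho-\id$ is the coboundary of $\theta=\id$. The paper is slightly terser on the cocycle verification (which you spell out correctly) and instead records the explicit bimodule relations $e_x.y=y.(e_{xy}-e_y)$, but the substance is the same.
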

\proof Explicitly, $\omega.x=x\omega\rho(x)$ and $\extd x=x(\rho(x)-\id)$ for all $x\in G$ and $\omega\in \Lambda^1$. If we write $e_x=x^{-1}\extd x=\zeta(x)$ then the commutation relations among these elements are $e_x.y=y.(e_{xy}-e_y)$ for all $x,y\in G$. Note that the $\{e_x\}$ are not typically linearly independent nor do they necessarily span. Also note that $e_e=0$. There is only one graded component $\Lambda^1_e=\End(V)$ which is closed under $\ra$ given by $\omega\ra h=\omega\rho(h)$ which ensures by Lemma~\ref{kG-dif-lem} that the calculus is bicovariant.   \endproof

Here the Hopf quiver has vertices $G$ and $\dim(V)^2$-squared loops on each vertex according to a basis of $\End(V)$ and $*$ given by $\rho$. The calculus is standard whenever the action of $kG^+$ generates all arrows from $\theta=(e\xrightarrow[]{(1)}e)$.  The calculus is also covariant under conjugation by $G$ acting on either side of $\End(V)$ via $\rho$. 

We now consider codifferentials on $k(G)$ with $G$ finite. 

\begin{lemma}\label{kofG-codif-lem}
Let $G$ be a finite group and $C=k(G)$. Bicovariant codifferential calculi data $(\Lambda^1,i)$ in (\ref{fcodiff}) are equivalent to \begin{itemize}
     \item[(a)] $\Lambda^1$ a $G$-graded left $G$-module $\Lambda^1=\oplus_{g\in G}\Lambda^1_g$ such that $h\la\Lambda^1_g=\Lambda^1_{hgh^{-1}};$
     \item[(b)] a group cocycle $\zeta\in Z^1(G,(\Lambda^1_e)^*)$ 
in the sense $\zeta_{gh}=\zeta_g\ra h+\zeta_h$ for all $g,h\in G.$  
\end{itemize}
Moreover, the codifferential calculus is coinner if $\zeta$ is a coboundary, such calculi corresponding to pairs $(\Lambda^1,[\vartheta])$ where $[\vartheta]\in (\Lambda^1_e)^*/ (\Lambda^1_e)^*_G.$  If $|G|$ is invertible in $k,$  then any bicovariant codifferential calculus on $k(G)$ is coinner.
\end{lemma}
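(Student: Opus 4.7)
My plan is to treat this as the $k(G)$-mirror of Lemma~\ref{kG-dif-lem}, applying the general classification (\ref{fcodiff}) and translating the crossed module data directly. First I would apply (\ref{fcodiff}) with $H=k(G)$: bicovariant codifferential calculi correspond to pairs $(\Lambda^1,i)$ where $\Lambda^1$ is a right $k(G)$-crossed module and $i\colon\Lambda^1\to k(G)^+$ is a morphism of crossed modules, with $k(G)^+$ carrying the structure (\ref{Aplus2}). Since $k(G)$ is commutative, the right adjoint action $g\ra h=(Sh\o)gh\t$ on $k(G)^+$ collapses to the counit action $a\ra b=\eps(b)a$. The usual dictionary identifies a right $k(G)$-crossed module with a $G$-graded left $G$-module satisfying $h\la\Lambda^1_g=\Lambda^1_{hgh^{-1}}$, exactly as in Lemma~\ref{kofG-dif-lem}, giving (a).

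Next I would analyze the map $i$. The right-module-map condition $i(\eta\ra\delta_g)=\eps(\delta_g)i(\eta)$ combined with $\eta\ra\delta_g=\delta_{g,h}\eta$ for $\eta\in\Lambda^1_h$ forces $i|_{\Lambda^1_h}=0$ for all $h\ne e$. So $i$ is determined by its restriction to $\Lambda^1_e$, which I write as $i(\eta)=\sum_{g}\zeta_g(\eta)\delta_g$ with $\zeta_e=0$ and each $\zeta_g\in(\Lambda^1_e)^*$. Using the right adjoint coaction $\Delta_R\delta_x=\sum_{ab=x}\delta_a\tens\delta_b-\sum_g\delta_g\tens\delta_x$ on $k(G)^+$, the comodule-map equation $\Delta_R i(\eta)=(i\tens\id)\Delta_R\eta$ reduces, by comparing coefficients of $\delta_a\tens\delta_h$, to $\zeta_{ah}(\eta)-\zeta_h(\eta)=\zeta_a(h\la\eta)$. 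Defining the right $G$-action $\ra$ on $(\Lambda^1_e)^*$ dual to the left action by $(\phi\ra h)(\eta)=\phi(h\la\eta)$, this rearranges to the cocycle identity $\zeta_{gh}=\zeta_g\ra h+\zeta_h$, which is (b).

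For coinnerness I would evaluate the general formula $i(\eta)=\<\vartheta,\eta\z\>\eta\o-\<\vartheta,\eta\>1$ with $\vartheta\in(\Lambda^1_e)^*$ on $\eta\in\Lambda^1_e$ using $\Delta_R\eta=\sum_h(h\la\eta)\tens\delta_h$, obtaining $i(\eta)=\sum_h(\<\vartheta,h\la\eta\>-\<\vartheta,\eta\>)\delta_h$, so that $\zeta_h=\vartheta\ra h-\vartheta$ is precisely a coboundary; conversely, every exact cocycle arises this way. Since $\vartheta_0\in((\Lambda^1_e)^*)_G$ produces zero, the coinner calculi are classified by $[\vartheta]\in(\Lambda^1_e)^*/((\Lambda^1_e)^*)_G$. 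Finally, when $|G|$ is invertible, summing the cocycle identity over $g\in G$ and reindexing $gh\mapsto g'$ yields $\sum_g\zeta_g=(\sum_g\zeta_g)\ra h+|G|\zeta_h$, so $\vartheta:=-|G|^{-1}\sum_g\zeta_g$ has coboundary exactly $\zeta$, and every bicovariant codifferential calculus is coinner. The main obstacle is the bookkeeping for the non-gradation-like right adjoint coaction on $k(G)^+$ in deriving (b); beyond that, the statement can alternatively be read as the precise duality-mirror of Lemma~\ref{kG-dif-lem} under $k(G)^*=kG$, which could also be invoked to short-circuit this step.
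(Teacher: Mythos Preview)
Your proof is correct and follows essentially the same approach as the paper: identify the crossed-module data with (a), use the module-map condition to see that $i$ vanishes off $\Lambda^1_e$, derive the cocycle identity from the comodule-map condition, and finish with the standard averaging argument. The only cosmetic difference is that the paper phrases the comodule condition as $i(h\la\eta)=i(\eta)(\cdot\,h)-1\cdot i(\eta)(h)$ (via right translation on $k(G)$) rather than unpacking $\Delta_R\delta_x$ directly, but the computation and conclusion are identical.
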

\proof
1) It is clear that $\Lambda^1$ being right $k(G)$-crossed module is equivalent to $\Lambda^1$ being a left $G$-crossed module, which is the data in a). In particular, $\Lambda^1_g=\Lambda^1\ra\delta_g$ and $\Delta_R(\eta)=\sum_{h\in G}h\la\eta\tens\delta_h.$ For b), $\Lambda^{1*}$ is canonically a $G$-graded right $G$-module via $\<f\ra h,\eta\>=\<f,h\la\eta\>$ for all $h\in G,\,\eta\in\Lambda^1,\,f\in \Lambda^{1*}$ and $(\Lambda^1_e)^*$ is clearly a submodule of $\Lambda^{1*}.$ For the codifferential structure $i:\Lambda^1\to k(G)^+,$ we let
\[ i(\eta)=\sum_{g\in G}\delta_g\<\zeta_g,\eta\>,\] where $\zeta_g\in \Lambda^{1*}$ and $\zeta_e=0$. That this is a module map means $i(\eta_h)=\delta_{h,e}i(\eta)$ for all $h\in G$ as the right $k(G)$-action on $k(G)^+$ is $\delta_g\ra\delta_h=\delta_{h,e}\delta_g$ for any $g,h\in G,\,g\neq e,$ from which we deduce that $\zeta_g\in (\Lambda^1_e)^*$. That $i$ is a comodule map means $i(h\la \eta)=i(\eta)(\  h)-1i(\eta)(h)$ for all $h\in G$, which means $\sum_{g\in G}\delta_g\<\zeta_g\ra h,\eta\>=\sum_{g\in G}\delta_{gh^{-1}}\<\zeta_g,\eta\>-\<\zeta_h,\eta\>$ which after a change of variables is the condition stated in b). The cocycle condition stated entails that $\zeta_e=0$. 

2) From (\ref{fcodiff}), a codifferential $i$ being coinner means $i(\eta)=\<\vartheta,\eta\z\>\eta\o-\<\vartheta,\eta\>1_{k(G)}=\sum_{h\in G} \<\vartheta,h\la\eta\>\delta_h-\sum_{h\in G}\<\vartheta,\eta\>\delta_h=\sum_{h\in G} \<\vartheta, (h-e)\la\eta\>\delta_h$ for some $\vartheta\in \Lambda^{1*}.$ 
This shows that $\zeta_g=\vartheta\ra (g-e)$ for some $\vartheta\in \Lambda^{1*}.$  Note that $\vartheta,\vartheta'\in \Lambda^{1*}$ define the same codifferential $i$ iff $\vartheta-\vartheta'\in (\Lambda^{1*})_G$. For a bicovariant codifferential, the corresponding $\zeta_g\in (\Lambda^1_e)^*$ for any $g\in G.$ Putting these two facts together, we know only component of $\vartheta$ in $(\Lambda^1_e)^*$ matters and $[\vartheta]\in (\Lambda^1_e)^*/ (\Lambda^1_e)^*_G.$

3) Check that $i(\eta)=\<\vartheta,\eta\z\>\eta\o-\<\vartheta,\eta\>1_{k(G)}$ with $\vartheta=-|G|^{-1}\sum_{h\in G}\zeta_h\in(\Lambda^1)^*_e$ agrees with $i(\eta)=\sum_{g\in G}\delta_g \<\zeta_g,\eta\>.$ \endproof

As an example, the dual to Lemma~\ref{CG} gives us a bicovariant coinner codifferential calculus on $k(G)$ associated to every finite-dimensional representaion $\rho$. 
Moreover, similarly to the differential side of $k(G)$, we can illustrate coinner bicovariant codifferential calculi on $k(G)$ by Hopf quivers. Let $(Q=Q(G,R),\cdot)$ be a coloured Hopf quiver together with a left $G$-action $\cdot$ on $kQ_1$ such that
\begin{enumerate}
    \item  $h\cdot k({}^x Q_1 {}^y)=k({}^{xh^{-1}} Q_1 {}^{yh^{-1}})$ for all $h,x,y\in G,$
    \item  $\cdot$ commutes with the canonical right $G$-action on $kQ_1,$ where $(x\xrightarrow[]{(i)} y)\cdot g=g^{-1}x\xrightarrow[]{(i)} g^{-1}y$ for any $g\in G.$
\end{enumerate}
Clearly, these are the exactly the same data (1),(3) for a Hopf digraph-quiver triple defined in Section~3.1. For all $h\in G$ and $g$ in some conjugacy class $C$ with ramification $R_C$, denote $h\cdot (e\xrightarrow[]{(i)} g)=\sum_j\lambda_{ij}(h,g)\, (h^{-1}\xrightarrow[]{(j)} gh^{-1})$ by some coefficients $\lambda_{ij}(h,g)$ in $k$ with $i,j=1,2,\dots,R_C.$ Since left $G$-action commutes with right canonical right $G$-action, we have \[h\cdot (x\xrightarrow[]{(i)} y)=\sum_j\lambda_{ij}(h,x^{-1}y)\, (xh^{-1}\xrightarrow[]{(j)} yh^{-1}).\]

\begin{proposition}\label{kofG-codif-can}
Associated to data $(Q=Q(G,R),\cdot)$ defined above on a finite group $G$ is a coinner bicovariant codifferential calculus on $k(G).$ Its structure is 
\begin{gather*}
\Omega^1=kQ_1,\quad f. x\xrightarrow[]{(i)} y =f(x)x\xrightarrow[]{(i)} y,\quad x\xrightarrow[]{(i)} y. f= x\xrightarrow[]{(i)} y f(y),\\
\Delta_L (x\xrightarrow[]{(i)} y)=\sum_{h\in G}\delta_h\tens (h^{-1}x\xrightarrow[]{(i)} h^{-1}y),\quad \Delta_R(x\xrightarrow[]{(i)} y)=\sum_{h\in G} h\cdot (x\xrightarrow[]{(i)} y)\tens \delta_h,\\
i(x\xrightarrow[]{(i)} y)=\delta_{x,y}(\lambda_{i1}(x)-\delta_{i,1})\delta_x,\quad\textit{with }\vartheta=\sum_{x\in G} (x\xrightarrow[]{(1)} x)^\ast.
\end{gather*}
where $\lambda_{ij}(h):=\lambda_{ij}(h,e)$
\end{proposition}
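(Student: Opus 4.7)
The plan is to verify, in order: (i) that $kQ_1$ with the stated structures forms a $k(G)$-Hopf bimodule; (ii) that $i$ is a $k(G)$-bimodule map landing in $k(G)^+$; (iii) the co-Leibniz rule (\ref{coLebniz}); and (iv) coinnerness with the specified $\vartheta$.

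For (i), I will first observe that the left/right $k(G)$-module structures are just the source/target bigrading on arrows. The coactions are the standard conversions between $k(G)$-coactions and $G$-actions: $\Delta_R(\eta)=\sum_h h\cdot\eta\otimes\delta_h$ turns the given left $G$-action into a right $k(G)$-coaction, with coassociativity equivalent to $(kl)\cdot=k\cdot l\cdot$; dually $\Delta_L$ arises from the canonical right $G$-action. The Hopf bimodule compatibility then reduces to condition (1) on $(Q,\cdot)$ (source and target transforming as $x,y\mapsto xh^{-1},yh^{-1}$ under $h\cdot$) together with condition (2) (commutation of the two $G$-actions, which gives commutation of the two coactions). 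For (ii), since $i$ is supported only on loops and the bimodule actions on a loop $x\xrightarrow[]{(i)} x$ both reduce to multiplication by $f(x)$, the module-map property collapses to $\delta_g\delta_x=\delta_{g,x}\delta_x$.

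For (iii) I treat non-loops and loops separately. On a non-loop both sides vanish because $h\cdot\eta$ and $\eta\cdot h$ remain non-loops by condition (1), so $i$ gives zero on every term. For a loop $\eta=x\xrightarrow[]{(i)} x$ the left side is $(\lambda_{i1}(x)-\delta_{i,1})\sum_{pq=x}\delta_p\otimes\delta_q$. Substituting the definitions and relabelling sums (using $h\cdot(x\xrightarrow[]{(i)} x)=\sum_j\lambda_{ij}(h)(xh^{-1}\xrightarrow[]{(j)} xh^{-1})$ with $p=xh^{-1}$ on the $\Delta_R$ side, and $h\mapsto p$ on the $\Delta_L$ side) gives
\[
(i\otimes\id)\Delta_R\eta+(\id\otimes i)\Delta_L\eta=\sum_{pq=x}\Big(\sum_j\lambda_{ij}(q)\lambda_{j1}(p)-\delta_{i,1}\Big)\delta_p\otimes\delta_q,
\]
so the co-Leibniz rule reduces to the identity $\sum_j\lambda_{ij}(q)\lambda_{j1}(p)=\lambda_{i1}(pq)$, i.e.\ the matrices $\lambda(h)=(\lambda_{ij}(h))$ satisfy the anti-homomorphism law $\lambda(pq)=\lambda(q)\lambda(p)$. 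This follows from applying $(qp)\cdot=q\cdot\circ p\cdot$ to $e\xrightarrow[]{(i)} e$, after pushing the extra factor of $p$ through using that $\cdot$ commutes with the canonical right action. This anti-homomorphism identity is the heart of the computation and the main obstacle.

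Finally for (iv), on a non-loop the pairing with $\vartheta$ vanishes in both coaction expansions so both $i(\eta)$ and the coinner right-hand side are zero. On a loop $\eta=x\xrightarrow[]{(i)} x$, evaluating $\langle\vartheta,\eta_{(0)}\rangle\eta_{(1)}-\eta_{(-1)}\langle\vartheta,\eta_{(0)}\rangle$ via the coactions established in (i) picks out the $(1)$-arrows from the $\Delta_R$ expansion and the coefficient $\delta_{i,1}$ from the $\Delta_L$ expansion; a short calculation, parallel to the bookkeeping done in (iii), then reproduces $(\lambda_{i1}(x)-\delta_{i,1})\delta_x=i(\eta)$, completing the proof.
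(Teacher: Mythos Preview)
Your steps (i)--(iii) are correct and are essentially identical to the paper's argument: the Hopf bimodule structure is the same one already established for Proposition~\ref{kofG-dif-can}, the bimodule property of $i$ reduces to $\delta_x\delta_g=\delta_{x,g}\delta_x$, and the co-Leibniz rule collapses to the matrix identity $\sum_j\lambda_{ij}(q)\lambda_{j1}(p)=\lambda_{i1}(pq)$, which both you and the paper derive from the fact that $\lambda$ is the matrix representation of the left $G$-action $\cdot$ (your anti-homomorphism formulation $\lambda(pq)=\lambda(q)\lambda(p)$ is exactly the paper's ``$\lambda$ is the matrix representation'' observation made explicit).

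Step (iv), however, does not work as written. If you literally evaluate the bicomodule coinner formula $\langle\vartheta,\eta_{(0)}\rangle\eta_{(1)}-\eta_{(-1)}\langle\vartheta,\eta_{(0)}\rangle$ on a loop $\eta=x\xrightarrow[]{(i)} x$ with $\vartheta=\sum_z(z\xrightarrow[]{(1)} z)^*$, you get
\[
\sum_{h}\lambda_{i1}(h)\delta_h-\delta_{i,1}\cdot 1_{k(G)},
\]
which is \emph{not} $(\lambda_{i1}(x)-\delta_{i,1})\delta_x$ in general (try $G=\Z_2$ with $\lambda(g)$ the swap matrix: the coinner formula gives $-\delta_g$ on $e\xrightarrow[]{(1)} e$, while the stated $i$ gives $0$). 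The point is that the general-coalgebra coinner formula on $\Omega^1$ need not be a left $k(G)$-module map, whereas the stated $i$ is. The paper does not actually verify coinnerness in its proof; it simply asserts it at the end. The correct route is via Lemma~\ref{kofG-codif-lem}: on $\Lambda^1_e=\mathrm{span}\{e^{(i)}_e\}$ with $e^{(i)}_e=\sum_x(x\xrightarrow[]{(i)} x)$ one has $i(e^{(i)}_e)=\sum_x(\lambda_{i1}(x)-\delta_{i,1})\delta_x$, and choosing $\vartheta=(e^{(1)}_e)^*\in(\Lambda^1_e)^*$ gives $\zeta_g=\vartheta\ra g-\vartheta$ with $\zeta_g(e^{(i)}_e)=\lambda_{i1}(g)-\delta_{i,1}$, so the cocycle is exact. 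The $i$ on $\Omega^1$ is then this $i$ on $\Lambda^1$ extended as a left module map, which recovers the stated formula.
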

\proof As shown in the proof of Proposition~\ref{kofG-dif-can}, the bimodule $kQ_1$ together with $\Delta_L,\Delta_R$ is a $k(G)$-Hopf bimodule. It suffices to show that $i:kQ_1\to k(G)$ is a $k(G)$-bimodule map such that $\Delta\circ i=i\tens\id\circ \Delta_R+\id\tens i\circ \Delta_L.$ Since $i(x\xrightarrow[]{(i)} y. f)=i(x\xrightarrow[]{(i)} y f(y))=f(y) \delta_{x,y}(\lambda_{i1}(x)-\delta_{i,1})\delta_x=f(x)\delta_{x,y}(\lambda_{i1}(x)-\delta_{i,1})\delta_x=i(x\xrightarrow[]{(i)} y)f,$ thus $i$ is a left $k(G)$-module map. Similarly $i$ is a right $k(G)$-module map. Next $(i\tens\id\circ\Delta_R+\id\tens i\circ \Delta_L)(x\xrightarrow[]{(i)} y)=\sum_{h\in G}\delta_{x,y}\sum_j\lambda_{ij}(h)(\lambda_{j1}(xh^{-1})-\delta_{j,1})\delta_{xh^{-1}}\tens \delta_h+\sum_{h\in G}\delta_h\tens \delta_{x,y}(\lambda_{i1}(h^{-1}x)-\delta_{i,1})\delta_{h^{-1}x},$ after a change of variables, which equals to 
$\sum_{h\in G}\delta_{x,y}(\sum_j\lambda_{ij}(h)\lambda_{j1}(xh^{-1})-\delta_{i,1}) \delta_{xh^{-1}}\tens\delta_h$. This equals to $\sum_{h\in G}\delta_{x,y}(\lambda_{i1}(x)-\delta_{i,1})\delta_{xh^{-1}}\tens\delta_h=\Delta\circ i(x\xrightarrow[]{(i)} y)$ due to $\lambda_{i1}(x)=\sum_j\lambda_{ij}(h)\lambda_{j1}(xh^{-1})$ since $\lambda$ is the matrix representation of $\cdot.$  
Therefore $\Delta\circ i=i\tens \id \circ \Delta_R+\id\tens i\circ \Delta_L$ and $(kQ_1,i)$ is a coinner bicovariant codifferential.
\endproof

\begin{theorem}\label{kofG-codif-thm}
Let $G$ be a finite group. Any coinner bicovariant codifferential calculus on $k(G)$ is isomorphic to one of the canonical forms in Proposition~\ref{kofG-codif-can} for some pair $(Q(G,R),\cdot).$ There is a bijection between the set of isomorphism classes of coinner bicovariant codifferential calculi and the set of isomorphism classes of such pairs.
\end{theorem}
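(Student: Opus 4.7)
The plan is to parallel the proof of Theorem~\ref{kG-dif-thm} on the differential side, exploiting that the data in Lemma~\ref{kofG-codif-lem} for a coinner bicovariant codifferential on $k(G)$ are formally dual to the data in Lemma~\ref{kG-dif-lem} for an inner bicovariant differential on $kG$. Without loss of generality, by (\ref{fcodiff}) and Lemma~\ref{kofG-codif-lem}, a coinner bicovariant codifferential calculus on $k(G)$ takes the form $\Omega^1=k(G)\tens\Lambda^1$ for data $(\Lambda^1,[\vartheta])$, where $\Lambda^1=\oplus_{g\in G}\Lambda^1_g$ is a $G$-graded left $G$-module with $h\la\Lambda^1_g=\Lambda^1_{hgh^{-1}}$ and $[\vartheta]\in(\Lambda^1_e)^*/(\Lambda^1_e)^*_G$.

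Starting from such data, I would construct the associated pair $(Q=Q(G,R),\cdot)$ as follows. Set $R_C:=\dim\Lambda^1_g$ for any $g\in C$; this is well-defined since the $G$-action induces isomorphisms $\Lambda^1_g\cong\Lambda^1_{hgh^{-1}}$. Form the coloured Hopf quiver $Q=Q(G,R)$, pick a basis $\{e_g^{(i)}\}_{i=1,\ldots,R_C}$ of each $\Lambda^1_g$, and arrange the basis of $\Lambda^1_e$ so that $(e_e^{(1)})^*=\vartheta_0$ for a fixed representative $\vartheta_0$ of $[\vartheta]$. The map $\varphi:k(G)\tens\Lambda^1\to kQ_1$ sending $\delta_x\tens e_g^{(i)}$ to $x\xrightarrow[]{(i)}xg$ is automatically a $k(G)$-bicomodule isomorphism, and we use it to transfer the left $G$-action on $\Lambda^1$ to a left action $\cdot$ on $kQ_1$. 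The compatibility $h\la\Lambda^1_g=\Lambda^1_{hgh^{-1}}$ forces $h\cdot k({}^xQ_1{}^y)=k({}^{xh^{-1}}Q_1{}^{yh^{-1}})$, and the commuting $k(G)$-bimodule/bicomodule structure ensures $\cdot$ commutes with the canonical right $G$-action. One then checks that the canonical codifferential of Proposition~\ref{kofG-codif-can} with $\vartheta=\sum_{x\in G}(x\xrightarrow[]{(1)}x)^*$ recovers $i$ under $\varphi$, because $\varphi$ identifies $\delta_x\tens e_e^{(1)}$ with the loop $x\xrightarrow[]{(1)}x$ and $(e_e^{(1)})^*=\vartheta_0$ by our choice.

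For the bijection on isomorphism classes, an isomorphism between two coinner calculi corresponds to a $G$-graded left $G$-module isomorphism $\phi:\Lambda^1\to\Lambda'^1$ with $\vartheta_0-\vartheta'_0\circ\phi\in(\Lambda^1_e)^*_G$. Via $\varphi,\varphi'$ this lifts to a $k(G)$-Hopf bimodule map $\psi=\varphi'\circ\tilde\phi\circ\varphi^{-1}:kQ_1\to kQ'_1$ that respects the bigrading and intertwines $\cdot,\cdot'$, which is exactly the notion of isomorphism of pairs $(Q(G,R),\cdot)$ declared earlier. The main obstacle I anticipate is bridging the equivalence class $[\vartheta]\in(\Lambda^1_e)^*/(\Lambda^1_e)^*_G$ with the rigid choice of a single marked loop $e\xrightarrow[]{(1)}e$ in the quiver picture: fixing the representative $\vartheta_0$ breaks this ambiguity, and one must verify that a different representative differs by an element of $(\Lambda^1_e)^*_G$ which after transfer modifies $e_e^{(1)}$ by a $G$-invariant element lying in the kernel of the induced codifferential, hence giving an isomorphic pair. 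Once this subtlety is handled, the remaining verifications are routine duals of those in the proof of Theorem~\ref{kG-dif-thm}.
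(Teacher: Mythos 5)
Your proposal follows essentially the same route as the paper: extract $(\Lambda^1,[\vartheta])$ from Lemma~\ref{kofG-codif-lem}, set $R_C=\dim\Lambda^1_g$, choose bases with $(e_e^{(1)})^*$ a fixed representative of $[\vartheta]$, transfer structure along $\varphi(\delta_x\tens e_g^{(i)})=x\xrightarrow{(i)}xg$, and pass isomorphisms of data through $\varphi,\varphi'$ to isomorphisms of pairs. Your explicit attention to the ambiguity between $[\vartheta]\in(\Lambda^1_e)^*/(\Lambda^1_e)^*_G$ and the marked loop is a point the paper handles only implicitly, but it does not change the argument.
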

\proof Here $(Q=Q(G,R),\cdot)$ and $(Q'=Q(G,R'),\cdot')$ are isomorphic if $R=R'$ and there exists a linear isomorphism $\varphi: kQ_1\to kQ'_1$ such that $\sum_j\lambda_{ij}(h,g)\varphi_{jk}(hgh^{-1})=\sum_j\varphi_{ij}(g)\lambda'{}_{jk}(h,g)$ and $\sum_j\varphi_{ij}(e)(\lambda'_{j1}(x)-\delta_{j,1})=\lambda_{i1}(x)-\delta_{i,1}$ for all $h,g\in G$ where $\varphi(x\xrightarrow[]{(i)} y) =\sum_j \varphi_{ij}(x^{-1}y) x\xrightarrow[]{(j)'} y.$ The assumption entails that $\sum_{j}\varphi_{ij}(e)\lambda'_{jk}(h)=\sum_j\lambda_{ij}(h)\varphi_{jk}(e)$ and $\sum_j\lambda_{ij}(h)\varphi_{j1}(e)=\lambda_{i1}(h)+\varphi_{i1}(e)-\delta_{i,1}.$ Now suppose $(\Omega^1,i)$ is any coinner bicovariant codifferential calculus on $k(G).$ By (\ref{fcodiff}) and Lemma~\ref{kofG-codif-lem} 2)-3), without loss of generality, we can assume $\Omega^1=k(G)\tens \Lambda^1$ for some data $(\Lambda^1,[\vartheta])$ where $\Lambda^1=\oplus_{g\in G}\Lambda^1_g$ a $G$-graded left $G$-module such that $h\la\Lambda^1_g =\Lambda^1_{hgh^{-1}}$ for any $g,h\in G$ and $[\vartheta]\in (\Lambda^1_e)^*/(\Lambda^1_e)^*_G.$ Clearly, $i(\delta_x\eta)=\<\vartheta\ra (x-e),\eta\>\delta_x.$

We now construct a data pair $(Q=Q(G,R),\cdot)$ from a given data $(\Lambda^1,[\vartheta])$ as follows. Firstly, we take $R=\sum_{C\in \mathfrak{C}}R_C C$ with $R_C=\dim \Lambda^1_g$ for some $g\in C,\,C\in\mathfrak{C}$ and define coloured Hopf quiver $Q=Q(G,R=\sum_{C\in \mathfrak{C}}R_CC).$ 
Secondly, for each $g\in C,\,C\in \mathfrak{C}$ with $R_C\neq 0,$ we choose a basis $\{e_g^{(i)}\}_{i=1}^{R_C}$ for $\Lambda^1_g.$ In particular, the basis $\{e_e^{(i)}\}_{i=1}^{R_{\{e\}}}$ of $\Lambda^1_e$ is chosen in the way that $(e_e^{(1)})^*=\vartheta,$ any fixed representative for $[\vartheta]\in (\Lambda^1_e)^*/(\Lambda^1_e)^*_G.$ Thirdly, noting that choice of basis elements provides a $k(G)$-bimodule isomorphism $\varphi:k(G)\tens\Lambda^1\to kQ_1$ by sending $\delta_x\tens e_g^{(i)}$ to arrow $x\xrightarrow[]{(i)}xg,$ the map $\varphi$ then transfers the $k(G)$-bicomodule structure on $\Omega^1=k(G)\tens\Lambda^1$ to $kQ_1.$ More precisely, the left $k(G)$-coaction $\Delta_L(\delta_x\tens e_g^{(i)})=\sum_{h\in G}\delta_h\tens\delta_{h^{-1}x}\tens e_g^{(i)}$ determines the canonical right $G$-action on $kQ_1$, namely $x\xrightarrow[]{(i)} xg\cdot h=h^{-1}x\xrightarrow[]{(i)} h^{-1}xg,$ while the right $k(G)$-coaction $\Delta_R(\delta_x\tens e_g^{(i)})=\sum_{h\in G}\delta_{xh^{-1}}\tens h\la e_g^{(i)}\tens \delta_h$ determines the left $G$-action on $kQ_1$, namely $h\cdot x\xrightarrow[]{(i)} xg=\lambda_{ij}(h,g) xh^{-1}\xrightarrow[]{(j)}xgh^{-1},$ where $h\la e_g^{(i)}=\sum_{j}\lambda_{ij}(h,g) e_g^{(j)}.$ 
Therefore one can verify that we now have a desired data pair $(Q=Q(G,R),\cdot)$. 
Under the map $\varphi,$ one can check that $\Omega^1=k(G)\tens \Lambda^1$ is isomorphic to the coinner bicovariant codifferential calculus $kQ_1$ associated to this data $(Q=Q(G,R),\cdot)$ in Proposition~\ref{kofG-codif-can}, since $i (\delta_x\tens e^{(i)}_g)=\<(e^{(1)}_e)^*, (x-e)\la e^{(i)}_g\>\delta_x=\delta_{g,e}(\lambda_{i1}(x)-\delta_{i,1})\delta_x=i(x\xrightarrow[]{(i)}xg)$ and thus $i=i_{kQ_1}\circ\varphi.$ The proof for the bijection on isoclasses is straightforward as two Hopf quiver data are defined to be isomorphic in order that the associated calculi are isomorphic. 
\endproof

 The association of a coinner bicovariant codifferential calculus on $k(G)$ to a `quiver codifferential calculus' in Proposition~\ref{kofG-codif-can} depends on the choice of the basis elements, but different choices lead to isomorphic codifferential calculi.
Also note that if the given quiver $Q=Q(G,R)$ has no loops, then the associated codifferential $i$ in Proposition~\ref{kofG-codif-can} is zero.

\subsection{Augmented exterior algebras on $k(G)$ and $kG$} In this section, we study when the first order bicovariant (co)differential calculi on $k(G)$ and $kG$ in the preceding two subsections extend to strongly bicovariant (co)differential graded algebras and when these are  augmented. 

For the `universal inner' construction in d1) on $k(G)$ we have
\begin{corollary}\label{kofG-univ-aug}
Let $A=k(G)$ and $(\Lambda^1,\theta,\vartheta)$ define an augmented first order inner and coinner bicovariant calculus with $\Delta_R\theta=\theta\tens 1$ (or $\theta_e\in{}_G\Lambda^1_e$) and $\vartheta\in(\Lambda^1_e)^*$ as in Lemmas~\ref{kofG-dif-lem} and~\ref{kofG-codif-lem}. The inner strongly bicovariant differential exterior algebra $\Omega_\theta(G)$ in d1) is augmented with codifferential $i$ automatically extended from first order. 
\end{corollary}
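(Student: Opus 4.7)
The plan is to invoke criterion (a1) from the Preliminaries: the universal inner strongly bicovariant differential graded algebra $\Omega_\theta(A)$ in (d1) carries an augmentation extending $i$ if and only if $\theta\ra i(\theta)$ is graded central in $\Omega_\theta(A)$. I claim that in the present setting this criterion holds for the trivial reason that $i(\theta)=0$.

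To see this I would apply the coinner expression for the first-order codifferential on $\Lambda^1$ recorded in Lemma~\ref{kofG-codif-lem}, namely $i(\eta)=\<\vartheta,\eta\z\>\eta\o-\<\vartheta,\eta\>1_A$ for $\eta\in\Lambda^1$, to the specific element $\eta=\theta$. The hypothesis $\Delta_R\theta=\theta\tens 1_A$ gives $\theta\z\tens\theta\o=\theta\tens 1_A$, so the two terms $\<\vartheta,\theta\>1_A$ cancel and $i(\theta)=0$. Equivalently, in the explicit form $i(\theta)=\sum_{g\in G}\delta_g\<\zeta_g,\theta\>$ with $\zeta_g=\vartheta\ra(g-e)$, the pairing $\<\zeta_g,\theta\>=\<\vartheta,(g-e)\la\theta\>$ vanishes for every $g$ because $\theta\in{}_G\Lambda^1_e$ means $g\la\theta=\theta$.

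Consequently $\theta\ra i(\theta)=\theta\ra 0=0$ in $\Lambda^1$, which is trivially graded central in $\Omega_\theta(A)$. By (a1) the codifferential then extends automatically from first order to all of $\Omega_\theta(A)$ as a strongly bicovariant codifferential, giving the stated augmentation. I do not expect a serious obstacle: the content of the corollary is simply that the right-invariance of $\theta$ together with $\vartheta$ being supported on the degree-$e$ component of $\Lambda^1$ is precisely the compatibility which forces the central term in criterion (a1) to vanish identically, so the augmentation is automatic rather than requiring any additional verification on higher degrees.
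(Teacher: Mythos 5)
Your proof is correct and follows essentially the same route as the paper's: both verify criterion (a1) by showing that the right-invariance $\Delta_R\theta=\theta\tens 1$ forces $\<\vartheta,(g-e)\la\theta\>=0$ for all $g$, hence $i(\theta)=0$ and $\theta\ra i(\theta)=0$ is trivially graded central. (Only a cosmetic slip: the invariance hypothesis is $\theta_e\in{}_G\Lambda^1_e$, equivalently $\theta\in{}_G\Lambda^1$, rather than $\theta\in{}_G\Lambda^1_e$; this does not affect the argument.)
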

\begin{proof}
From $\Delta_R\theta=\theta\tens 1$ or $\theta_e\in {}_G\Lambda^1_e$, we know $(g-1)\la \theta_e=0$ for all $g\in G$.  This means $i(\theta)=\sum_{g\in G}\delta_g\<\vartheta,(g-1)\la\theta_e\>=0,$ so $\theta\ra i(\theta)=0.$
\end{proof}

We can also illustrate inner strongly bicovariant differential graded algebra  $\Omega_\theta(G)$ in d1) in terms of Hopf quivers.

\begin{corollary}\label{kofG-qpathalgebra} Let $(\bar Q\subseteq Q,\ast)$ be a Hopf digraph-quiver triple on a finite group $G$. The first order calculus in Proposition~\ref{kofG-dif-can} extends to $\Omega_\theta(G)$ as a quotient of the path super-Hopf algebra the relation that the element $\sum_{x\in G, a,b\in\bar C}x \xrightarrow[]{(1)} xa\xrightarrow[]{(1)}xab$ is central in the path algebra. Moreover, any inner strongly bicovariant calculus on $k(G)$ with inner data right-invariant that is generated by its degrees 0,1 and has the canonical form for degree one is isomorphic to a quotient of $\Omega_\theta(G)$ for some Hopf digraph-quiver triple.
\end{corollary}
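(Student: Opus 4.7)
The plan is to identify $\Omega_\theta(k(G))$ from construction~d1) with the proposed path-algebra quotient by extending the Theorem~\ref{kofG-dif-thm} identification to all degrees, and then to invoke the universality clause in~d1) for the `moreover' statement.

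Set $A=k(G)$ and keep the canonical first-order identification $\delta_x\tens e^{(i)}_g\mapsto x\xrightarrow[]{(i)}xg$ of Proposition~\ref{kofG-dif-can}, under which $\theta=\sum_{g\in\bar C}\omega_g$ becomes $\sum_{x\in G,\,a\in\bar C}x\xrightarrow[]{(1)}xa$. I would first promote this to an identification of the full cross product $A\rbiprod T_-\Lambda^1=\bigoplus_n A\tens(\Lambda^1)^{\tens n}$ with the path algebra $kQ$. The cross product multiplication evaluates to $(\delta_x\tens\xi)(\delta_y\tens\eta)=\delta_x\tens(\xi\ra\delta_{x^{-1}y})\eta$ and, since $\xi\ra\delta_h$ projects onto the $h$-component of $\xi$ under the $G$-grading, it matches path concatenation exactly (the product is nonzero iff the target of the first path equals the source of the second). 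Under this isomorphism $\theta\wedge\theta$ becomes the length-two sum $\sum_{x\in G,\,a,b\in\bar C}x\xrightarrow[]{(1)}xa\xrightarrow[]{(1)}xab$ displayed in the statement.

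Next, I would translate the defining relations of $\Lambda_\theta(\Lambda^1)$, namely $\theta^2\ra a$ for $a\in A^+$ and $[\theta^2,\eta]$ for $\eta\in\Lambda^1$, into the centrality of $\theta^2$ in $kQ$. Using the cross product formula above, $(1\tens\theta^2)(b\tens 1)-(b\tens 1)(1\tens\theta^2)=b\tens(\theta^2\ra b-\eps(b)\theta^2)$ for $b\in A$, so the first family of relations is precisely the commutation of $\theta^2$ with $A$. Similarly $(1\tens\theta^2)(1\tens\eta)-(1\tens\eta)(1\tens\theta^2)=1\tens[\theta^2,\eta]$ for $\eta\in\Lambda^1$; and since $T_-\Lambda^1$ is algebra-generated by $\Lambda^1$, commutation with $A\oplus\Lambda^1$ implies commutation with all of $kQ$, i.e.\ centrality. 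Conversely, the generators $[\theta^2,\delta_x]$ and $[\theta^2,\alpha]$ of the centraliser ideal in $kQ$ already arise in $A\rbiprod\langle\theta^2\ra a,[\theta^2,\eta]\rangle$, so the two ideals match. Hence $A\rbiprod\Lambda_\theta(\Lambda^1)\cong kQ/\langle\theta^2\text{ central}\rangle$ as algebras. The super-Hopf structure from~d1) transports to a length-graded super-Hopf quotient of $kQ$, the hypothesis $\Delta_R\theta=\theta\tens 1$ corresponds to the choice $\theta_e=0$ consistent with the canonical form, and the inner differential $\extd=[\theta,\cdot]$ is inherited automatically.

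For the `moreover' statement, let $(\Omega,\extd)$ be an inner strongly bicovariant differential graded algebra on $k(G)$ with right-invariant inner data $\theta$, generated in degrees $0,1$, and in canonical form in degree one. Theorem~\ref{kofG-dif-thm} extracts a Hopf digraph-quiver triple $(\bar Q\subseteq Q,\ast)$ from the degree-one data, and the universal property in~d1) exhibits $(\Omega,\extd)$ as a quotient of $\Omega_\theta(A)$ for this $\theta$; combining with the path-algebra identification just established gives the claimed presentation. The main obstacle is the careful bookkeeping in the intermediate step: one must check that the crossed-module right action $\ra$ on $\Lambda^1$ translates correctly into the length-two $G$-grading (so that both sides of the centrality relations in $kQ$ pick out the same explicit linear combinations of length-two paths), after which the identification of the two quotient ideals, and hence the universal property, follows directly.
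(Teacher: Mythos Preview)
Your approach is essentially the paper's: identify $k(G)\rbiprod T_-\Lambda^1$ with the path algebra, recognise the displayed length-two element as $\theta^2$, and read the defining relations of $\Lambda_\theta(\Lambda^1)$ as centrality of $\theta^2$; then invoke the universal property in d1). That is exactly what the paper does in its one-line proof.

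There is, however, a genuine slip in your middle paragraph. The cross-product commutator with a function is not $b\tens(\theta^2\ra b-\eps(b)\theta^2)$: using your own formula $(\delta_x\tens\xi)(\delta_y\tens\eta)=\delta_x\tens(\xi\ra\delta_{x^{-1}y})\eta$ one finds
\[
[\theta^2,\delta_x]\;=\;\sum_{h\ne e}\bigl(\delta_{xh^{-1}}-\delta_x\bigr)\tens(\theta^2)_h,
\]
where $(\theta^2)_h=\theta^2\ra\delta_h$. Your claimed identity would only hold if $\theta^2$ were $G$-homogeneous, which it is not. The conclusion you want still follows, but needs a short extra argument: the display shows each $[\theta^2,\delta_x]$ lies in $A\tens I$ (so $J\subseteq A\rbiprod I$), while for the reverse inclusion one recovers $1\tens(\theta^2)_g$ for $g\ne e$ from the ideal $J$ via $\delta_y\cdot[\theta^2,\delta_{yg}]=\delta_y\tens(\theta^2)_g$ and summation over $y$. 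With this fix (and the unchanged argument for $[\theta^2,\eta]$), the two ideals coincide and your identification $\Omega_\theta(G)\cong kQ/\langle\theta^2\ \text{central}\rangle$ goes through. The ``moreover'' part is fine; note only that ``canonical form for degree one'' already pins down the triple, so the appeal to Theorem~\ref{kofG-dif-thm} is not strictly needed.
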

\proof We construct $\Omega_\theta(G)$ from d1) noting that $\theta$ in Proposition~\ref{kofG-dif-can} is invariant under $\Delta_R$, the element stated being $\theta^2$. Here the path super-Hopf algebra is isomorphic to $k(G)\rbiprod T_-\Lambda^1$ as explained. The second part follows from the universal property of the construction $\Omega_\theta(G).$
\endproof

For the well-known `Woronowicz' construction in d2) on $k(G)$, we have
\begin{corollary}\label{kofG-wor}
Let $A=k(G)$ and $(\Lambda^1,\theta)$ define an inner generalised bicovariant differential calculus as in Lemma~\ref{kofG-dif-lem}. This extends to an inner strongly bicovariant differential graded algebra $(\Omega_w(G),\extd)$ in d2) iff $\Delta_R\theta=\theta\tens 1$, i.e. $\theta_e\in{}_G\Lambda^1_e$, where $\Omega_w(G)=k(G)\rbiprod B_-(\Lambda^1)$ is generated by $k(G),\Lambda^1$ with relations, coproduct and exterior derivative
\[ v\delta_g=\delta_{g|v|^{-1}}v, \quad \Delta\delta_g=\sum_{h\in G}\delta_{gh^{-1}}\tens\delta_h,\quad \Delta v=1\tens v+\sum_{h\in G}h\la v\tens \delta_h,\quad \extd=[\theta,\ \}\]
 for all homogeneous $v\in\Lambda^1$ of $G$-degree $|v|$ and all $g\in G$. \end{corollary}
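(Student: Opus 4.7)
My plan is to apply the general construction d2) directly to the data $(\Lambda^1,\theta)$ produced by Lemma~\ref{kofG-dif-lem}, so the work reduces to checking each of its three hypotheses for $A=k(G)$ and then translating the abstract smash bi-product $A\rbiprod B_-(\Lambda^1)$ into the explicit bimodule/coproduct/exterior-derivative formulas asserted in the statement.

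First I will analyse the cotensor product $\Lambda^1\square A$. Because $A=k(G)$ is commutative, the argument already given in the proof of Lemma~\ref{kofG-dif-lem} shows that $\Lambda^1\square A=\Lambda^1_e\tens A$. Since $\Delta_R\theta-\theta\tens1=\sum_{h\in G}(h\la\theta-\theta)\tens\delta_h$ and the pieces $\omega_g$ ($g\neq e$) are only permuted within conjugacy classes by the left $G$-action, one sees $h\la\theta-\theta=h\la\theta_e-\theta_e\in\Lambda^1_e$, so the first Woronowicz condition of d2) holds automatically for any choice of $\theta_e\in\Lambda^1_e$. Next, using $\Psi(\eta\tens\theta)=\theta\z\tens\eta\ra\theta\o=\sum_h h\la\theta\tens(\eta\ra\delta_h)$ and the fact that $\eta\ra\delta_h=\eta$ if $\eta\in\Lambda^1_h$ and zero otherwise, one computes $\Psi(\eta\tens\theta)=|\eta|\la\theta\tens\eta$ for homogeneous $\eta$. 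Hence $\Psi(\eta\tens\theta)=\theta\tens\eta$ for all $\eta\in\Lambda^1$ if and only if $h\la\theta=\theta$ for every $h\in G$, equivalently $h\la\theta_e=\theta_e$ for all $h$, i.e. $\theta_e\in{}_G\Lambda^1_e$ or equivalently $\Delta_R\theta=\theta\tens1$. Under this condition the third Woronowicz condition $\{\Delta_R\theta-\theta\tens1,\Delta_R\eta\}=0$ holds trivially, giving the `if' direction; the `only if' direction is the same computation run backwards.

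With $\Delta_R\theta=\theta\tens1$ in hand, the construction d2) produces the strongly bicovariant differential graded algebra $\Omega_w(G)=k(G)\rbiprod B_-(\Lambda^1)$ with $\extd=[\theta,\ \}$. I will then read off the explicit structure from the general smash bi-product formulas: for the right $k(G)$-module structure on $\Lambda^1$ one has $v\ra\delta_h=\delta_{h,|v|}v$, which via the smash product rule $v.a=a\o.(v\ra a\t)$ gives $v\delta_g=\delta_{g|v|^{-1}}v$ for homogeneous $v$; the coproduct on $k(G)$ is the standard $\Delta\delta_g=\sum_h\delta_{gh^{-1}}\tens\delta_h$; and the smash coproduct, combining the trivial left coaction and the right coaction $\Delta_R v=\sum_h h\la v\tens\delta_h$, gives $\Delta v=1\tens v+\sum_h h\la v\tens\delta_h$ for $v\in\Lambda^1$.

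The main obstacle is not the sufficiency of the condition — that is essentially a direct computation once $\Lambda^1\square A$ is identified — but rather making the necessity statement precise: one must be sure that the conditions of d2) really do force $h\la\theta_e=\theta_e$ for every $h$, rather than only those $h$ for which $\Lambda^1_h\neq0$. I will handle this by noting that the braided exterior algebra construction and the formula $\extd=[\theta,\ \}$ applied to all of $A=k(G)$ require $\Psi(\eta\tens\theta)=\theta\tens\eta$ on every homogeneous $\eta\in\Lambda^1$, and that the bicovariance condition combined with Lemma~\ref{kofG-dif-lem} forces $h\la\theta-\theta$ to live entirely in the $\Lambda^1_e$-component, so the condition reduces cleanly to $\theta_e\in{}_G\Lambda^1_e$.
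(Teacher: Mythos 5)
There is a genuine gap in your necessity (``only if'') direction, and it is precisely the issue you flag at the end without actually resolving. The braiding is $\Psi(\eta\tens\theta)=|\eta|\la\theta\tens\eta$, so the condition $\Psi(\eta\tens\theta)=\theta\tens\eta$ for all homogeneous $\eta$ forces $h\la\theta=\theta$ (equivalently $h\la\theta_e=\theta_e$, since $h\la\theta-\theta=h\la\theta_e-\theta_e$) \emph{only for those $h$ with $\Lambda^1_h\neq 0$}, not for all $h\in G$. Your proposed fix --- that bicovariance forces $h\la\theta-\theta$ to lie in $\Lambda^1_e$ --- only locates this element; it does not show it vanishes for $h$ outside the support of the grading. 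A concrete obstruction: take $G=\Z_3$ and $\Lambda^1=\Lambda^1_e$ one-dimensional with $g$ acting by a nontrivial cube root of unity. Then the second Woronowicz condition is vacuously satisfied (the only degree occurring is $e$), yet $\theta_e\notin{}_G\Lambda^1_e$. So the second condition alone cannot deliver the claimed conclusion, and your statement ``the third Woronowicz condition holds trivially'' throws away exactly the hypothesis needed.

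The paper closes this gap by using the third condition $\{\Delta_R\theta-\theta\tens 1,\Delta_R\eta\}=0$ in an essential way: since $\Delta_R\theta_e=\sum_{h\in G}h\la\theta_e\tens\delta_h$ runs over \emph{all} $h\in G$, specialising to $\eta=\theta_e$ gives, for every $h$, the relation $2(h\la\theta_e)\tens(h\la\theta_e)=\theta_e\tens(h\la\theta_e)+(h\la\theta_e)\tens\theta_e$; extending $\theta_e$ to a basis of $\Lambda^1_e$ and comparing coefficients forces $h\la\theta_e=\theta_e$ for all $h\in G$, hence $\Delta_R\theta=\theta\tens 1$. Your sufficiency direction and the reading-off of the explicit relations, coproduct and $\extd=[\theta,\ \}$ from the biproduct are fine and match the paper; you should repair the necessity argument by incorporating this use of the third condition.
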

\proof  The braiding in our case on $\Lambda^1$ is $\Psi(\eta\tens \zeta)= \sum g\la \zeta\tens \eta \delta_g(|\eta|)=|\eta|\la \zeta \tens \eta$. If $\theta_e\neq 0,$ the condition $\Psi(\eta\otimes\theta)=\theta\otimes\eta$ for all $\eta$ means $h\la\theta_e=\theta_e$ for $h$ where $\Lambda^1_h\neq0.$ The condition $\{\Delta_R\theta-\theta\otimes1,\Delta_R(\eta)\}=0$ implies $\{\Delta_R(\theta_e)-\theta_e\otimes1,\Delta_R(\eta)\}=0$ for all $\eta\in\Lambda^1.$ Choose $\eta=\theta_e,$ we have $2(h\la\theta_e)^2=\theta_e(h\la\theta_e)+(h\la\theta_e)\theta_e.$ Since $h\la\theta_e\in\Lambda^1_e,$ we can extend $\theta_e$ to a basis of $\Lambda^1_e$ to prove that $h\la\theta_e=\theta_e$ for all $h\in G,$
 which means $\Delta_R(\theta_e)=\theta_e\otimes1$ and thus $\Delta_R\theta=\theta\otimes1.$ The rest of $(\Omega_w(G),\extd)$ is an elaboration of the general construction of d2). \endproof

For the tensor exterior algebra construction c3) on $k(G)$  we have
\begin{corollary}\label{kofG-tens}
Let $(\Omega^1(Q(G,R),\cdot),i)$ be the coinner bicovariant codifferential calculus on $k(G)$ in Proposition~\ref{kofG-codif-can}. The associated path super-Hopf algebra $kQ$ automatically becomes  a coinner strongly bicovariant codifferential graded algebra with $i$ extends to higher degrees as a super-derivation on paths, i.e.
\[i(p)=i(\alpha_1\cdots\alpha_{n-1})\alpha_n+(-1)^n\alpha_1\cdots\alpha_{n-1}i(\alpha_n),\]
for any path $p=\alpha_1\cdots\alpha_n.$
\end{corollary}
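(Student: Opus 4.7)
The plan is to specialise construction c3) of Section~2 to $H=k(G)$ with the first-order bicovariant codifferential data $(\Lambda^1,\vartheta)$ underlying Proposition~\ref{kofG-codif-can} via Lemma~\ref{kofG-codif-lem}. According to c3), the biproduct $\Omega_{tens}(k(G))=k(G)\rbiprod T_-\Lambda^1$ carries a coinner strongly bicovariant codifferential graded algebra structure, with the codifferential extended from degree one as a super-derivation, provided the compatibility $\<\vartheta,\xi\>\eta=\eta\z\<\vartheta,\xi\ra\eta\o\>$ holds for all $\xi,\eta\in\Lambda^1$.

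The first step is to identify the path super-Hopf algebra $kQ$ (all arrows odd) as a graded super-Hopf algebra with $k(G)\rbiprod T_-\Lambda^1$. This is the content of the Hopf quiver / Hopf bimodule correspondence recalled at the end of Section~2.3 specialised to the super case: in the category of right $k(G)$-crossed modules, the braided-super tensor algebra on $\Lambda^1=kQ_1$ is exactly the path algebra of $Q$ with concatenation product and length grading coinciding with the super-degree. Under this identification, the first-order piece of the c3)-codifferential agrees with the $i$ of Proposition~\ref{kofG-codif-can}.

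The second step is to check the coinner compatibility, which I expect to hold automatically. In the basis $\{e_g^{(i)}\}$ used in the proof of Theorem~\ref{kofG-codif-thm} we have $\vartheta=(e_e^{(1)})^{*}$, so $\<\vartheta,e_g^{(j)}\>=\delta_{g,e}\delta_{j,1}$. Taking $\xi=e_g^{(j)}$ and $\eta=e_h^{(k)}$, and using $\Delta_R(\eta)=\sum_{l\in G}(l\la\eta)\tens\delta_l$ together with $e_g^{(j)}\ra\delta_l=\delta_{g,l}\,e_g^{(j)}$ (since $\Lambda^1_g=\Lambda^1\ra\delta_g$), the right-hand side collapses to $(g\la e_h^{(k)})\<\vartheta,e_g^{(j)}\>$. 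Its only nonzero instance is $g=e$, $j=1$, in which case $e\la e_h^{(k)}=e_h^{(k)}$, matching the left-hand side $\delta_{g,e}\delta_{j,1}\,e_h^{(k)}$. Hence the coinner condition holds for all $\xi,\eta\in\Lambda^1$, c3) applies, and the stated recursive formula for $i$ on paths is just the super-derivation rule iterated from the first-order $i$ of Proposition~\ref{kofG-codif-can}. The main piece of genuine work is the Hopf super-algebra identification in the first step; the coinner check and the extension of $i$ are essentially formal consequences of c3).
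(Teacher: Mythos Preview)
Your proposal is correct and follows essentially the same approach as the paper: both identify $kQ$ with $\Omega_{tens}(k(G))=k(G)\rbiprod T_-\Lambda^1$ from construction c3), and then verify the coinner compatibility $\<\vartheta,\xi\>\eta=\eta\z\<\vartheta,\xi\ra\eta\o\>$ on the basis $\{e_g^{(i)}\}$ by reducing to the case $g=e$, $j=1$ where the left $G$-action by $e$ is trivially the identity. Your computation is in fact slightly more direct than the paper's (which expands $g\la e_h^{(k)}$ via the matrix coefficients $\lambda_{jk}(g,h)$ before specialising to $g=e$), but the substance is identical.
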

\proof It is clear that $\Omega_{tens}(G)=kQ.$ It suffices to check that $\<\vartheta,\xi\>\eta=\eta\z\<\vartheta,\xi\ra\eta\o\>$ for any $\xi,\eta\in \Lambda^1.$ Using the notations in the proof of Theorem~\ref{kofG-codif-thm}, let $\xi=e^{(i)}_g=\sum_{x\in G}x\xrightarrow[]{(i)}xg$ and $\eta=e^{(j)}_h=\sum_{x\in G}x\xrightarrow[]{(i)}xh$ be any two basis element in $\Lambda^1.$ Clearly, $\eta\z\tens\xi\ra\eta\o=g\la e^{(j)}_h\tens e^{(i)}_g=\sum_k\lambda_{jk}(g,h)e^{(k)}_h\tens e^{(i)}_g,$ thus $\<\vartheta,\xi\>\eta=\delta_{g,e}\delta_{i,1}e^{(j)}_h$ while $\eta\z\<\vartheta,\xi\ra\eta\o\>=\lambda_{jk}(g,h)e^{(k)}_h\delta_{g,e}\delta_{i,1}.$ These two equations agree with each other if and only if $\lambda_{jk}(e,h)=\delta_{j,k},$ which is always true as $\lambda_{jk}(e,h)$ is the matrix representation of the action of identity $e$. 
\endproof

\begin{example}
Let $G=\mathbb{Z}_2=\langle g \rangle,$ with $Q=Q(\Z_2,R)$ given by $R=2\{g\}$ and $\bar Q=e{\to\atop\leftarrow}g$. 
Consider the Hopf digraph-quiver triple $(\bar{Q}\subset Q,\ast)$ with $\ast$  the canonical left action. Denote the arrows $\alpha_i:e\xrightarrow[]{(i)}g$ and $\beta_i: g\xrightarrow[]{(i)}e,\ i=1,2,$ as shown.
\[\begin{tikzpicture}[descr/.style={fill=white},text height=1.5ex, text depth=0.25ex]
\node (a) at (0,0) {${\circ\atop e}$};
\node (b) at (2.5,0) {${\circ\atop g}$};
\path[->,font=\scriptsize,>=angle 90]
([yshift= 18pt]a.east) edge node[above] {$\alpha_1$} ([yshift= 18pt]b.west)
([yshift= 10pt]a.east) edge node[descr] {$\alpha_2$} ([yshift= 10pt]b.west)
([yshift= -2pt]b.west) edge node[descr] {$\beta_1$} ([yshift= -2pt]a.east)
([yshift=-10pt]b.west) edge node[below] {$\beta_2$} ([yshift=-10pt]a.east);
\end{tikzpicture}\] 
The path super-Hopf algebra is $kQ$ is $k\< 1,\delta_e,\alpha_i,\beta_i\>$ modulo the  the relations
\[ \delta_e^2=\delta_e,\quad \delta_e\alpha_i=\alpha_i,\quad \alpha_i\delta_e=\delta_e\beta_i=0,\quad \beta_i\delta_e=\beta_i,\quad \alpha_i\alpha_j= \beta_i\beta_j=0,\quad\forall i,j\]
with grading $|\alpha_i|=|\beta_i|=1$ and super-coproduct defined on generators by
\[  \Delta\delta_e=\delta_e\tens\delta_e+\delta_g\tens \delta_g,\quad \Delta\alpha_i=\delta_e\tens\alpha_i+\delta_g\tens\beta_i+\alpha_i\tens\delta_e+\beta_i\tens\delta_g,\]
\[ \Delta\beta_i=\delta_e\tens\beta_i+\delta_g\tens\alpha_i+\beta_i\tens\delta_e+\alpha_i\tens\delta_g\]
where $\delta_g=1-\delta_e$. The counit is $\epsilon(\delta_e)=1,\ \epsilon(\alpha_i)=0,\ \epsilon(\beta_i)=0.$ The left-invariant 1-forms are $\Lambda^1=\Lambda^1_g=k\text{-}\mathrm{span}\{e^{(i)}\}$ where $e^{(i)}=\alpha_i+\beta_i,$ with (co)action given by $e^{(i)}\ra\delta_e=0$ and $\Delta_R(e^{(i)})=e^{(i)}\tens 1$. Then \[ kQ\isom k(\Z_2)\rcross T_-\Lambda^1=k(\Z_2)\rcross k\langle e^{(1)},e^{(2)}\rangle\] with cross relations $e^{(i)}\delta_e=\delta_g e^{(i)}$ for all $i$, and the tensor product coalgebra as the coaction is trivial. Hence $\Delta e^{(i)}=e^{(i)}\tens 1+1\tens e^{(i)}$ and $\eps(e^{(i)})=0$.
From Corollary~\ref{kofG-tens}, we know $kQ$ is a strongly bicovariant differential graded algebra with $i=0.$

Next we compute $\Omega_\theta(\Z_2)$ in d2). Here $\theta=e^{(1)}=\alpha_1+\beta_1$ so we have $\theta^2\ra\delta_g=(\theta\ra\delta_e)(\theta\ra\delta_g)+(\theta\ra\delta_g)(\theta\ra\delta_e)=0$ and $[\theta^2,e^{(1)}]=0$, so
\[ \Omega_\theta(\Z_2)=k(\Z_2)\rbiprod k\langle e^{(1)},e^{(2)}\rangle/{\langle e^{(1)}e^{(1)}e^{(2)}-e^{(2)}e^{(1)}e^{(1)}\rangle}.\] Equivalently, as $\delta_e(e^{(1)}e^{(1)}e^{(2)}-e^{(2)}e^{(1)}e^{(1)})=\alpha_1\beta_1\alpha_2-\alpha_2\beta_1\alpha_1$ and $\delta_g(e^{(1)}e^{(1)}e^{(2)}-e^{(2)}e^{(1)}e^{(1)})=\beta_1\alpha_1\beta_2-\beta_2\alpha_1\beta_1,$ it follows that $\Omega_\theta(\Z_2)$ is $kQ$ modulo the additional relations
\[ \alpha_1\beta_1\alpha_2=\alpha_2\beta_1\alpha_1,\quad  \beta_1\alpha_1\beta_2=\beta_2\alpha_1\beta_1.\]
Here $\theta^2=\alpha_1\beta_1+\beta_1\alpha_1$ is central and requiring this is equivalent to imposing these relations as in Corollary~\ref{kofG-qpathalgebra}. The new feature not present for the path algebra is the super-derivation  $\extd=[\theta,\ \}$. Thus
\[ \extd \delta_e=\beta_1-\alpha_1,\quad \delta \theta=2\theta^2,\quad \delta e^{(2)}=e^{(1)}e^{(2)}+e^{(2)}e^{(1)}\] or
\[ \extd\alpha_1=\beta_1\alpha_1+\alpha_1\beta_1,\quad \extd\alpha_2=\beta_1\alpha_2+\alpha_2\beta_1,\quad
\extd\beta_1=\alpha_1\beta_1+\beta_1\alpha_1,\quad \extd\beta_2=\alpha_1\beta_2+\beta_2\alpha_1\]
 extended as a super-derivation with $\extd^2=0$.

Finally, since the braiding is trivial on $\Lambda^1,$  $B_{-}(\Lambda^1)=B_{-}^{quad}(\Lambda^1)=\Lambda(e^{(1)},e^{(2)})$, the usual Grassmann algebra on generators $\{e^{(i)}\}$ with anticommutative relations and basis $\{1,e^{(1)},e^{(2)},e^{(1)}\wedge e^{(2)}\}.$ Thus the canonical `minimal' calculus as in d1) is $k(\Z_2)\rcross B_-(\Lambda^1)=k(\Z_2)\rcross\Lambda(e^{(1)},e^{(2)})$ with cross relations as above.  Equivalently, as $\delta_e e^{(i)}=\alpha_i$ and $\delta_g e^{(i)}=\beta_i,$ it follows that $\Omega_\theta(\Z_2)$ is a quotient of the path algebra by the further relations \[ \alpha_2\beta_1=-\alpha_1\beta_2,\quad \beta_2\alpha_1=-\beta_1\alpha_2,\quad \alpha_i\beta_i=0,\quad \beta_i\alpha_i=0,\quad i=1,2.\]
Here $\theta^2=\alpha_1\beta_1+\beta_1\alpha_1=0$ agrees with $\theta^2$ is graded central in d1). In this quotient we see that $\delta e^{(i)}=0$ or equivalently \[ \extd\alpha_1=\extd\beta_1=0,\quad \extd\alpha_2=-\extd\beta_2=\beta_1\alpha_2-\alpha_1\beta_2.\]\end{example}

\bigskip We now turn to when a first order bicovariant (co)differential calculus on $kG$ extends to higher orders and when such strongly bicovariant (co)differential grade algebras on $kG$ are augmented. 

We start with the coinner strongly bicovariant codifferential graded algebra $\Omega_\vartheta(kG)=kG\rbiprod B_\vartheta(\Lambda^1)$ in c1) for some bicovariant data $(\Lambda^1,\vartheta)$ in Lemma~\ref{kG-codif-lem}.
Choose a basis $\{e^{(i)}_g\}_{i=1}^{R_C}$ for each $\Lambda^1_g$ with $g$ in some conjugacy class $C$ such that $(e^{(1)}_g)^*=\iota_g$ whenever $\iota_g\neq 0$. Then the braided-super Hopf algebra
\[ B_{\vartheta}(\Lambda^1)=\big\{\sum_{g_1,\dots,g_n\in G \atop i_1,\dots,i_n}\lambda_{g_1,\dots,g_n}^{i_1,\dots,i_n}e^{(i_1)}_{g_1}\tens e^{(i_2)}_{g_1}\tens\cdots \tens e^{(i_n)}_{g_n}\in \Sh_-(\Lambda^1)\ |\ 
\lambda_{g_1,\dots,g_n}^{i_1,\dots,i_n} \text{obey (A) and (B)}\big\},
\]
where the conditions (A) are \[\sum_{a,b\in\bar{C}\atop ab=g}\lambda_{a,b,g_3,\dots,g_n}^{1,1,i_3\dots,i_n}=
\sum_{a,b\in\bar{C}\atop ab=g}\lambda_{g_1,a,b,g_4,\dots,g_n}^{i_1,1,1,i_4\dots,i_n}=\dots =
\sum_{a,b\in\bar{C}\atop ab=g}\lambda_{g_1,g_2,\dots,g_{n-2},a,b}^{i_1,i_2\dots,i_{n-2},1,1}=0,\]
for any $g_1,\dots g_{n},g\in G$ with $g\neq e$, and the conditions (B) are \[
\sum_{a,b\in\bar{C}\atop ab=e}\lambda_{a,b,g_1,\dots,g_{n-2}}^{1,1,i_1\dots,i_{n-2}}=
\sum_{a,b\in\bar{C}\atop ab=e}\lambda_{g_1,a,b,g_2,\dots,g_{n-2}}^{i_1,1,1,i_2\dots,i_{n-2}}=\dots =
\sum_{a,b\in\bar{C}\atop ab=e}\lambda_{g_1,g_2,\dots,g_{n-2},a,b}^{i_1,i_2\dots,i_{n-2},1,1},\] for any fixed $g_1,\dots g_{n-2}\in G$ and their fixed indices $i_1,\dots,i_{n-2}.$ The coinner codifferential is given by
\[ i(h\tens v_1\tens\cdots\tens v_n)=\<\vartheta,v_1\>hg\tens(v_2\tens\cdots \tens v_n)+(-1)^n h\tens(v_1\tens\cdots \tens v_{n-1})\<\vartheta,v_n\>,\]
for all $h\in G$, $v_1\tens\cdots\tens v_n\in B_{\vartheta}(\Lambda^1)$ and $v_1\in \Lambda^1_g$ for some $g\in G$. 
\begin{corollary}\label{kG-max-aug}
Let $A=kG$ and $(\Lambda^1,\theta,\vartheta)$ define an augmented inner first order bicovariant differential calculus with $\theta\in \Lambda^1_e,\,\vartheta\in \Lambda^{1*}$ as in Lemmas~\ref{kG-dif-lem} and~\ref{kG-codif-lem}. 
Suppose $\<\vartheta,v\ra g\>=\<\vartheta,v\>$ for any $v\in \Lambda^1,\,g\in G,$ i.e. $\iota_e\in {}_G (\Lambda^{1}_e)^*,$ then the coinner bicovariant codifferential calculus $\Omega_\vartheta(G)=kG\rbiprod B_{\vartheta}(\Lambda^1)$ is augmented with $\extd=[\theta,\ \}.$ 
\end{corollary}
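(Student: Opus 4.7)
The plan is to apply criterion~a3) from the preliminaries, which characterises when the coinner strongly bicovariant codifferential graded algebra $\Omega_\vartheta(G)=kG\rbiprod B_\vartheta(\Lambda^1)$ admits an augmentation by a first-order bicovariant differential $\omega\colon(kG)^+\to\Lambda^1$. My strategy is to show that the invariance hypothesis forces the scalar pairings $\<\vartheta\tens\vartheta,\xi\z\tens\tilde\omega(\xi\o)\>$ to vanish identically, so that both equations of~a3) collapse to $0=0$.

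First I would unpack the relevant Sweedler notation. Since $\Lambda^1$ is a right $kG$-comodule via its $G$-grading, a homogeneous $v\in\Lambda^1$ satisfies $v\z\tens v\o=v\tens |v|$, hence $\tilde\omega(v\o)=\omega\pi(|v|)=\zeta_{|v|}$. The inner case of Lemma~\ref{kG-dif-lem} then identifies the cocycle as $\zeta_g=\theta\ra g-\theta$ with $\theta\in\Lambda^1_e$. Because $\theta\ra g\in\Lambda^1_{g^{-1}eg}=\Lambda^1_e$, the element $\zeta_g$ lies entirely in $\Lambda^1_e$, and consequently only the $\iota_e$-component of $\vartheta=\iota_e+\sum_{g\ne e}\iota_g$ contributes to the pairing $\<\vartheta,\zeta_g\>$.

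The key calculation is then a one-liner: applying the hypothesis $\iota_e\in{}_G(\Lambda^1_e)^*$ to the element $\theta\in\Lambda^1_e$ gives $\<\iota_e,\theta\ra g\>=\<\iota_e,\theta\>$, so
\[
\<\vartheta,\tilde\omega(v\o)\>=\<\iota_e,\theta\ra|v|\>-\<\iota_e,\theta\>=0
\]
for every homogeneous $v\in\Lambda^1$. Substituting into the first condition of~a3) shows that $\<\vartheta\tens\vartheta,\xi\z\tens\tilde\omega(\xi\o)\>=\<\vartheta,\xi\>\<\vartheta,\tilde\omega(\xi\o)\>=0$ for every $\xi\in\Lambda^1$, so both sides vanish. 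The same cancellation, applied to the $\eta_1$-factor on the left and to the $\eta_n$-factor on the right, kills both sides of the second condition as well. Criterion~a3) is therefore verified and $\extd=[\theta,\ \}$ extends through $\Omega_\vartheta(G)$ as the desired augmentation.

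The main subtlety is keeping straight which component of $\vartheta=\iota_e+\sum_{g\ne e}\iota_g$ actually enters the computation---namely $\iota_e$, which by Lemma~\ref{kG-codif-lem} is a free parameter not pinned down by the bicovariance data---and recognising that inner-ness realises $\zeta_g$ as a coboundary inside $\Lambda^1_e$, so that $G$-invariance of precisely the $\iota_e$-component suffices to annihilate it. Once that observation is in place, the rest of the verification is essentially bookkeeping.
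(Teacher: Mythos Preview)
Your proposal is correct and follows essentially the same route as the paper: both verify criterion~a3) by showing that the scalar $\<\vartheta\tens\vartheta,\xi\z\tens\tilde\omega(\xi\o)\>$ vanishes for homogeneous $\xi$, via the observation that $\tilde\omega(\xi\o)=\theta\ra|\xi|-\theta\in\Lambda^1_e$ so only the $\iota_e$-component of $\vartheta$ sees it, and then the $G$-invariance of $\iota_e$ kills this pairing. Your write-up is slightly more explicit about why only $\iota_e$ contributes and about how both sides of the two equations in~a3) collapse, but the argument is the same.
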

\proof It is suffices to show that the conditions stated for a3) 
are satisfied. If $\iota_e=0$ there is nothing to prove. If $\iota_e\neq 0.$
First, we note that for any $v\in \Lambda^1,\,g\in G$, $\<\vartheta,v\ra g\>=\<\vartheta,v\>$ is equivalent to $\<\iota_e,v\ra g\>=\<\iota_e,v\>$ i.e. $\<(g-1)\la\iota_e,v\>=0,$ which means $\iota_e\in {}_G(\Lambda^1_e)^*.$ For any $v\in \Lambda^1,$ without loss of generality, say $v\in \Lambda^1_g$ for some $g.$ Then $\<\vartheta\tens\vartheta,v\z\tens\tilde{\omega}(v\o)\>=\<\vartheta\tens\vartheta,v\tens\tilde{\omega}(g)\>=\<\vartheta\tens\vartheta,v\tens\theta\ra(g-1)\>=\<\iota_g,v\>\<\iota_e,\theta\ra(g-1)\>=\<\iota_g,v\>\<(g-1)\la\iota_e,\theta\>=0.$ This completes the proof.\endproof

Corollary~\ref{kofG-univ-aug} and Corollary~\ref{kG-max-aug} taken together imply
\begin{corollary}
Let $(\Lambda^1,\theta,\vartheta)$ define an augmented first order bicovariant differential calculus on $k(G)$ for a finite group $G$ with $\theta\in\Lambda^1$ right invariant and $\vartheta\in{\Lambda^{1*}_e}.$ (So $(\Lambda^{1*},\vartheta,\theta)$ on $kG$ is in the setting of Corollary~\ref{kG-max-aug}.) Then $\Omega_\theta(k(G))=k(G)\rbiprod \Lambda_\theta(\Lambda^1)$ in Corollary~\ref{kofG-univ-aug} and $\Omega_\vartheta(kG)=kG\rbiprod B_{\vartheta}({\Lambda^1}^*)$ in Corollary~\ref{kG-max-aug} are mutually dually super-Hopf-algebras with the differential on one side dual to the codifferential on the other side.
\end{corollary}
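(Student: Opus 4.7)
The plan is to deduce the corollary from the general duality, discussed at the end of Section~2.2, between augmented strongly bicovariant differential graded algebras on a finite-dimensional Hopf algebra $A$ and augmented strongly bicovariant codifferential graded algebras on $A^*$, which swaps $\extd$ and $i$. Applied to $A=k(G)$ and $A^*=kG$, this will give the duality claim once we identify the two constructions on the two sides. First I would verify that the hypotheses of Corollaries~\ref{kofG-univ-aug} and~\ref{kG-max-aug} match up under this duality: by Lemmas~\ref{kofG-dif-lem} and~\ref{kG-dif-lem}, the data $(\Lambda^1,\theta)$ on $k(G)$ with $\theta$ right invariant corresponds under left-right reversal to $(\Lambda^{1*},\vartheta)$-type data on $kG$ with $\vartheta$ left $G$-invariant on its $e$-component, which is precisely the condition $\iota_e\in{}_G(\Lambda^{1*}_e)$ required in Corollary~\ref{kG-max-aug}.

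The main technical step would be to identify the graded dual of $\Lambda_\theta(\Lambda^1)=T_-\Lambda^1/\langle\theta^2\ra a,\,[\theta^2,\eta]\rangle$ with $B_\vartheta(\Lambda^{1*})$ as braided super-Hopf algebras in the category of right crossed modules over the relevant Hopf algebra. Here the starting point is that $T_-V$ and $\Sh_-(V^*)$ are mutually graded-dual braided super-Hopf algebras (tensor and shuffle type respectively), so the quotient on one side dualises to the annihilator subspace on the other. I would then unpack what annihilation of the generating relations means under the pairing: the relations $\theta^2\ra a=0$ for $a\in k(G)^+$ should dualise exactly to conditions~(A) of $B_\vartheta(\Lambda^{1*})$, saying that the $\vartheta\tens\vartheta$ evaluation of any two adjacent slots vanishes outside trivial $G$-degree, while $[\theta^2,\eta]=0$ should dualise to conditions~(B), saying that the two extremal positions yield the same $\vartheta\tens\vartheta$ evaluation. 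At this point the right-invariance of $\theta$ and $G$-invariance of $\iota_e$ are essential to make the pairing identifications compatible with the crossed module structures.

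Once $\Lambda_\theta(\Lambda^1)$ and $B_\vartheta(\Lambda^{1*})$ are shown to be graded dual, the outer biproduct structure dualises automatically: because a biproduct is built from compatible action and coaction data that swap under duality, one obtains that $k(G)\rbiprod\Lambda_\theta(\Lambda^1)$ and $kG\rbiprod B_\vartheta(\Lambda^{1*})$ are mutually graded-dual super-Hopf algebras. Finally, the fact that $\extd$ on one side transposes to $i$ on the other is immediate from their inner and coinner forms: $\extd=[\theta,\ \}$ is the super-commutator with $\theta$ while $i$ is the coinner degree $-1$ map built from $\vartheta$, and $\theta$, $\vartheta$ are mutually dual by choice, so the graded transpose of one is exactly the other.

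The step I expect to be the main obstacle is the middle one: verifying by direct computation that conditions (A) and (B) in the definition of $B_\vartheta(\Lambda^{1*})$ exactly cut out the annihilator of the defining ideal of $\Lambda_\theta(\Lambda^1)$ under the graded pairing. This comes down to careful bookkeeping of how a pair $\theta\tens\theta$ inserted at an arbitrary position in a tensor word interacts with the right $k(G)$-action (equivalently, the $G$-grading) and with the pairing against $\vartheta$; the two families of conditions correspond to the two distinct cases of nontrivial or trivial $G$-degree at the inserted position, which is exactly how $\theta^2\ra a$ and $[\theta^2,\eta]$ split. Once this combinatorial identification is in hand, the rest of the proof is formal.
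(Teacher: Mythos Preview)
Your approach is correct and essentially the same as the paper's, but you are working harder than necessary. The paper presents this corollary without proof, simply remarking that Corollaries~\ref{kofG-univ-aug} and~\ref{kG-max-aug} taken together imply it. The point is that the general duality between the d1) construction $\Omega_\theta(A)=A\rbiprod\Lambda_\theta(\Lambda^1)$ and the c1) construction $\Omega_\vartheta(A^*)=A^*\rbiprod B_\vartheta(\Lambda^{1*})$ is already established in \cite[Lemma~4.10, Corollary~4.13]{MaTao1}, as recalled at the end of Section~2.2. So your ``main technical step'' of identifying the graded dual of $\Lambda_\theta(\Lambda^1)$ with $B_\vartheta(\Lambda^{1*})$, while correctly outlined (the annihilator of the quotient ideal matching conditions (A) and (B)), is precisely the content of that cited result and need not be reproved here. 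The two preceding corollaries serve only to check that both constructions exist and are augmented under the stated hypotheses, after which the general duality does the rest.
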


For the `Woronowicz' construction d2) on $kG$, we have

\begin{proposition}\label{kG-wor} Let $A=kG$ and $(\Lambda^1,\theta)$ define an inner bicovariant calculus in Lemma~\ref{kG-dif-lem} 3). The bimodule relations and exterior derivative are
\[  \eta. g= g (\eta\ra g),\quad \extd g= g(\theta\ra g-\theta),\quad\forall\eta\in\Lambda^1,\ g\in G.\]
The conditions for a differential exterior algebra require $\Delta_R\theta=\theta\tens 1$ if the calculus is connected. The super-Hopf algebra structure of $\Omega_w(kG)=kG\rbiprod B_-(\Lambda^1)$ and exterior derivative  are
\[ \Delta \eta=1\tens\eta+\eta\z\tens\eta\o,\quad \Delta g=g\tens g,\quad \forall \eta\in \Lambda^1,\ g\in G,\quad \extd=[\theta,\ \}.\]
\end{proposition}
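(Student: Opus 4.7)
The plan is to extract all formulas from the general theory of Section~2 specialised to $A=kG$. First I derive the bimodule and exterior-derivative formulas by substituting $\Delta g=g\tens g$ and $\pi(g)=g-1$ into the canonical form (\ref{fdiff}): the right action $(b\tens\eta)\cdot g=bg_{(1)}\tens\eta\ra g_{(2)}=bg\tens(\eta\ra g)$ yields $\eta\cdot g=g(\eta\ra g)$, while $\extd g=g_{(1)}\tens\omega\pi(g_{(2)})=g\tens\omega(g-1)$. Since the calculus is inner with $\omega(a)=\theta\ra a$ for $a\in A^+$, this becomes $\extd g=g(\theta\ra g-\theta)$, which equals $\theta g-g\theta=[\theta,g]$ under the bimodule formula just derived.

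For the extension to a strongly bicovariant differential exterior algebra I turn to the three Woronowicz conditions in d2). Writing $\theta=\sum_h\theta_h$ in the $G$-grading $\Lambda^1=\oplus_h\Lambda^1_h$, one has $\Delta_R\theta-\theta\tens 1=\sum_h\theta_h\tens(h-1)$. The first condition $\Delta_R\theta-\theta\tens1\in\Lambda^1\square A$ is automatic: for any $g\in G$ the required identity $\sum_h(\theta_h\ra g)\tens(h-1)=\sum_h\theta_h\tens(ghg^{-1}-1)$ reduces after reindexing $h\mapsto ghg^{-1}$ to $\theta_h\ra g=\theta_{g^{-1}hg}$, which holds by Lemma~\ref{kG-dif-lem}(a). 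The decisive step is the braiding condition $\Psi(\eta\tens\theta)=\theta\tens\eta$: since $\Psi(\eta\tens\theta)=\sum_h\theta_h\tens(\eta\ra h)$, this demands $\eta\ra h=\eta$ for every $\eta\in\Lambda^1$ whenever $\theta_h\ne 0$. Applied to $\eta=\theta$ itself this gives $\theta\ra h=\theta$, whence $\extd h=h(\theta\ra h-\theta)=0$; by connectedness of $\extd$ this forces $h=e$, so $\theta\in\Lambda^1_e$ and $\Delta_R\theta=\theta\tens 1$. Conversely, once $\theta\in\Lambda^1_e$ the braiding condition reduces to $\theta\tens(\eta\ra e)=\theta\tens\eta$ which is trivial, and the third condition $\{\Delta_R\theta-\theta\tens 1,\Delta_R\eta\}=0$ holds because its first argument vanishes.

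It then remains to identify the super-Hopf algebra structure of $\Omega_w(kG)=kG\rbiprod B_-(\Lambda^1)$ by invoking the bosonisation construction used in~\cite{MaTao1}. Group elements stay group-like, giving $\Delta g=g\tens g$, while for a primitive element $\eta$ of the braided-super Hopf algebra $B_-(\Lambda^1)$, the biproduct coproduct specialises to $\Delta\eta=1\tens\eta+\eta_{(0)}\tens\eta_{(1)}$, where $\eta_{(0)}\tens\eta_{(1)}$ is the right $A$-coaction from the crossed module structure; on $\Lambda^1_g$ this reads $\Delta\eta=1\tens\eta+\eta\tens g$. The cross-relations $\eta g=g(\eta\ra g)$ come from applying the biproduct formula to a group-like in the $A$-tensor slot, matching the first-order bimodule structure already derived, and $\extd=[\theta,\ \}$ extends as a super-derivation of square zero to the whole of $\Omega_w(kG)$ directly from the general d2) construction.

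The principal obstacle is the direction of implication in paragraph two, namely the deduction that \emph{connectedness} of the first order calculus is precisely what forces $\theta\in\Lambda^1_e$ via the braiding condition; the other two Woronowicz conditions and the derivation of the bimodule, coproduct and exterior derivative formulas are all essentially bookkeeping once the Hopf algebra structure of $kG$ is substituted into the general formulae.
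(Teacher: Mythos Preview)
Your proof is correct and follows essentially the same route as the paper: both verify the d2) conditions directly, compute $\Psi(\eta\tens\theta)=\sum_h\theta_h\tens(\eta\ra h)$, deduce that $\ra h$ acts trivially whenever $\theta_h\ne 0$, and then use connectedness on $\extd h=h(\theta\ra h-\theta)=0$ to force $h=e$. Your treatment is in fact slightly more complete than the paper's, since you explicitly dispose of the first condition $\Delta_R\theta-\theta\tens1\in\Lambda^1\square A$ (automatic from bicovariance, as the $h=e$ term drops out) and the third anticommutator condition (trivial once $\Delta_R\theta=\theta\tens1$), whereas the paper leaves these implicit and instead records some incidental consequences (that such $g$ commute with all $h$ in the support of $\Lambda^1$) which are not needed for the conclusion.
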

\proof The condition $\Psi(\theta\tens\eta)=\eta\tens\theta$ for all $\eta$ means $\sum_g \theta_g\tens (\eta\ra g-\eta)=0$ for all $\eta$. This requires the action of $g$ to be the identity whenever $\theta_g\ne 0$. This is a strong condition and among other things requires $g$ where $\theta_g\ne 0$ to commute with all $h$ where $\Lambda^1_h\ne 0$. It also needs that such $g$ commute with all $\eta$ in $\Omega^1$ as stated. Finally, setting $\eta=\theta$ it also requires $\sum_g \theta_g\tens\extd g=0$ which for a connected calculus (where $\ker\,\extd=k\{1\}$) means $\theta=\theta_e$.  In this case we have an exterior super-Hopf algebra $\Omega_w(kG)=kG\rbiprod B_-(\Lambda^1)$,  where we extend
the above with the relations of $B_-(\Lambda^1)$, the  super homomorphism property of $\Delta$ and the graded-derivation property of $\extd$.\endproof

Note that the standard part $\bar\Lambda^1\subseteq\Lambda^1_e$ in this case and hence $\bar\Lambda$ is the usual Grassmann algebra on $\bar\Lambda^1$ in keeping with the known theory of standard bicovariant calculi on $kG$.

\begin{corollary} Both $\Omega_w(G)=k(G)\rbiprod B_-(\Lambda^1)$ constructed by $(\Lambda^1,\theta)$ in Corollary~\ref{kofG-wor} and $\Omega_w(kG)=kG\rbiprod B_-(\Lambda^1{}^*)$ constructed by $(\Lambda^{1*},\vartheta)$ in Proposition~\ref{kG-wor} with $\Lambda^1$ dual to $\Lambda^{1*}$, are augmented and are mutually dual as graded super-Hopf algebras.
\end{corollary}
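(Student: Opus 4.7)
The plan is to exhibit $\Omega_w(G)$ and $\Omega_w(kG)$ as mutual graded super-Hopf algebra duals and then invoke the principle from the preliminaries (cf.~\cite[Lemma 4.10, Corollary 4.13]{MaTao1}) that in the finite-dimensional case the graded dual of a strongly bicovariant differential graded algebra is a strongly bicovariant codifferential graded algebra on the dual Hopf algebra, with $\extd$ and $i$ swapping roles. Once mutual duality is established, the differential on each side produces a codifferential on the other, yielding augmentation on both.

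\textbf{Step 1 (super-Hopf algebra duality).} With $G$ finite, $k(G) \cong (kG)^*$ as Hopf algebras. The right $k(G)$-crossed module $\Lambda^1$ (i.e.\ a $G$-graded left $G$-module) has graded dual $\Lambda^{1*}$, naturally a right $kG$-crossed module (i.e.\ a $G$-graded right $G$-module), the left $G$-action on $\Lambda^1$ transposing to a right $G$-action on $\Lambda^{1*}$. The crossed module braiding $\Psi(\xi\tens\eta) = \eta\z \tens \xi\ra\eta\o$ respects this duality: its transpose on $\Lambda^{1*}$ is the corresponding braiding there. By functoriality of the braided exterior algebra we obtain $B_-(\Lambda^1)^{gr*} \cong B_-(\Lambda^{1*})$ as braided-super Hopf algebras; combined with the compatibility of the Radford biproduct $\rbiprod$ with graded duality this yields
\[ (\Omega_w(G))^{gr*} = (k(G) \rbiprod B_-(\Lambda^1))^{gr*} \cong kG \rbiprod B_-(\Lambda^{1*}) = \Omega_w(kG) \]
as graded super-Hopf algebras.

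\textbf{Step 2 (transfer of structures and augmentation).} The differential $\extd = [\theta,\ \}$ on $\Omega_w(G)$ is both a super-derivation and a super-coderivation by strong bicovariance, so its graded transpose is a super-coderivation and super-derivation of degree $-1$ on $\Omega_w(kG)$, i.e.\ a codifferential. Evaluating at first order, for $\eta \in \Lambda^{1*}$ the identity $\<\extd^*\eta,\delta_x\> = \<\eta,[\theta,\delta_x]\>$ matches the coinner formula $i(\eta) = \<\vartheta',\eta\z\>\eta\o - \<\vartheta',\eta\>1$ from Lemma~\ref{kG-codif-lem}, with $\vartheta' \in (\Lambda^1_e)^*$ the functional dual to $\theta \in {}_G\Lambda^1_e$. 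This gives an augmentation of $\Omega_w(kG)$. By the symmetric argument, dualising the inner differential of Proposition~\ref{kG-wor} determined by $\vartheta$, the algebra $\Omega_w(G)$ likewise acquires a compatible codifferential and is augmented; the two augmentations are mutually dual by construction.

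\textbf{Main obstacle.} The technical heart is the identification $B_-(\Lambda^1)^{gr*} \cong B_-(\Lambda^{1*})$ as braided-super Hopf algebras. Morally this is a general fact about finite-dimensional objects in braided categories, but the explicit verification requires tracing how the quotients by $\ker(\id + \Psi)$ (and its higher-degree analogues) on one side correspond under the pairing to their counterparts on the dual side, and checking the compatibility of multiplication, comultiplication, action and coaction throughout. Once this is secured the remainder is routine, since the inner datum $\theta$ and coinner datum $\vartheta$ play mutually dual roles in the Woronowicz construction and the biproduct behaves well under transposition.
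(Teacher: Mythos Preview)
Your approach is essentially the same as the paper's. The paper simply cites \cite[Proposition 4.9 and Lemma 4.10]{MaTao1} for the graded super-Hopf algebra duality (your Step~1) and then observes that since both first-order structures are inner, both extend via the Woronowicz construction to differentials, whence each side inherits a codifferential from the other and both are augmented (your Step~2). You have correctly identified the braided-exterior duality $B_-(\Lambda^1)^{gr*}\cong B_-(\Lambda^{1*})$ as the substantive point, which the paper delegates to the cited reference rather than reproving. One small notational slip: in your Step~2 the coinner datum for the codifferential on $kG$ obtained by dualising $\extd=[\theta,\ \}$ is $\theta$ itself, viewed as an element of $(\Lambda^{1*})^*\cong\Lambda^1$, not something in $(\Lambda^1_e)^*$; Lemma~\ref{kG-codif-lem} is stated for the $kG$-crossed module, which here is $\Lambda^{1*}$, so its $\vartheta$ lives in $(\Lambda^{1*})^*$.
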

\proof  This is now clear from \cite[Proposition 4.9 and Lemma 4.10]{MaTao1}.
As both sides of $\Omega^1$ are inner they both extend to a $\Omega_w$ from d1)  and hence both sides are augmented.  \endproof

For the shuffle construction in d3) on $kG$ we have

\begin{corollary}\label{kG-shu}
Let $(\Omega^1(Q(G,R),\ast),\extd=[\theta,\ ])$ be the inner bicovariant differential calculus on $kG$ in Proposition~\ref{kG-dif-can}. The associated path super-Hopf algebra $kQ$ becomes an inner strongly bicovariant differential graded algebra with differential given by $\extd=[\theta,\ \},$ namely
\[\extd(p)=\theta\cdot (\alpha_1\alpha_2\cdots\alpha_n)-(-1)^n (\alpha_1\alpha_2\cdots\alpha_n)\cdot \theta,\]
for all path $p=\alpha_1\alpha_2\cdots\alpha_n$ in $Q.$
\end{corollary}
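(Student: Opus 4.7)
\textit{Proof plan.} The approach is to recognise the path super-Hopf algebra $kQ$ as the shuffle calculus $\Omega_{sh}(kG)$ of construction d3) and then verify the hypotheses guaranteeing that this shuffle calculus is inner with inner element $\theta = e\xrightarrow[]{(1)} e$.

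First, I would identify the super-Hopf algebra structures. The coloured Hopf quiver $Q=Q(G,R)$ with all arrows odd has, by the Rosso/Cibils--Rosso correspondence recalled in the preliminaries, its graded super-Hopf structure on $kQ$ entirely determined by the $kG$-Hopf bimodule structure on $kQ_1$, and this structure is obtained precisely via the quantum shuffle product. Using the basis identification $x\xrightarrow[]{(i)}xg \leftrightarrow x\tens f_g^{(i)}$ employed in the proof of Theorem~\ref{kG-dif-thm}, one obtains an isomorphism of graded super-Hopf algebras $kQ \isom kG\rbiprod \Sh_-(\Lambda^1) = \Omega_{sh}(kG)$, where $\Lambda^1$ carries the right $kG$-crossed module structure described in Lemma~\ref{kG-dif-lem}. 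From construction d3), $\Omega_{sh}(kG)$ is a strongly bicovariant differential graded algebra whose degree one component agrees (under this identification) with $\Omega^1$ in Proposition~\ref{kG-dif-can}.

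Second, I would check the two conditions from d3) which characterise when $\Omega_{sh}(kG)$ is inner with element $\theta$: (i) $\theta$ makes the first order data $(\Lambda^1,\omega)$ inner; (ii) $\Psi(\eta\tens \theta)=\theta\tens \eta$ for every $\eta\in \Lambda^1$. Condition (i) is part of the data in Proposition~\ref{kG-dif-can}, where $\theta=e\xrightarrow[]{(1)}e \in \Lambda^1_e$. For (ii), the crossed module pre-braiding $\Psi(\xi\tens \eta)=\eta\z\tens \xi\ra\eta\o$ on a $kG$-crossed module simplifies, since the right coaction is the $G$-grading, to $\Psi(\xi\tens \eta)=\eta\tens \xi\ra |\eta|$. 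Hence $\Psi(\eta\tens \theta)=\theta\tens \eta\ra |\theta| = \theta\tens \eta$ automatically because $|\theta|=e$ and $\eta\ra e=\eta$. So both conditions are met.

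Third, once $\Omega_{sh}(kG)$ is inner, the general d3) theory automatically gives $\extd = [\theta,\ \}$ on the whole super-Hopf algebra as a super-derivation and super-coderivation. Applying this to a homogeneous monomial path $p=\alpha_1\cdots\alpha_n$, which has degree $n$ since all arrows are odd, yields the stated formula $\extd(p) = \theta\cdot p - (-1)^n\, p\cdot \theta$. The main obstacle, which is more bookkeeping than conceptual, is matching the explicit right $G$-action $\ast$ on $kQ_1$ described in Proposition~\ref{kG-dif-can} with the right $kG$-module structure on $\Lambda^1$ transported through the basis choice, and checking that the resulting crossed-product multiplication on $kG\rbiprod \Sh_-(\Lambda^1)$ reproduces path concatenation on $kQ$; this is precisely the content of Rosso's theorem specialised to our conventions and is already implicit in the identification used in Theorem~\ref{kG-dif-thm}.
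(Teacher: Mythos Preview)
Your proposal is correct and follows essentially the same route as the paper: identify $kQ$ with $\Omega_{sh}(kG)=kG\rbiprod\Sh_-(\Lambda^1)$ via the Rosso/Cibils--Rosso shuffle construction, then verify the inner condition from d3) by checking $\Psi(\eta\tens\theta)=\theta\tens\eta$, which holds because $\theta\in\Lambda^1_e$ so $\eta\ra|\theta|=\eta\ra e=\eta$. One minor slip: in your final paragraph you speak of the crossed-product multiplication reproducing ``path concatenation'' on $kQ$, but the product on the path super-Hopf algebra here is the (super) quantum shuffle product, not concatenation; this is exactly what the paper notes when it writes out $\alpha\cdot\beta=[\alpha\ast s(\beta)][t(\alpha)\ast\beta]-[s(\alpha)\ast\beta][\alpha\ast t(\beta)]$.
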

\proof It is clear that the shuffle construction in d3) is path super-Hopf algebra with product given by super-quantum shuffle, namely $\Omega_{sh}(kG)=kQ.$ The product between the arrows in $kQ_1$ in our notation can be computed by \[\alpha\cdot \beta=[\alpha\ast s(\beta)][t(\alpha)\ast\beta]-[s(\alpha)\ast\beta][\alpha\ast t(\beta)],\] where $s(\alpha),t(\alpha)$ denotes the source and target vertices of each arrow $\alpha$ and $[\ ]$'s connected by concatenation. The formulae for higher orders are refer to~\cite[(3.1)]{Hua}. It suffices to show that  braiding $\Psi$ on $\Lambda^1$ such that $\Psi(\eta\tens\theta)=\theta\tens\eta$ for all $\eta\in\Lambda^1$ in d3). In fact, for any basis element $\eta\in \Lambda^1, $ say $\eta:e\xrightarrow[]{(i)} g$ for some $i$ and $g\in C$ with $R_C\neq 0,$ then $\Psi(\eta\tens\theta)=(e\xrightarrow[]{(1)}e
)\tens(e\xrightarrow[]{(i)}g)\ra e=\theta\tens\eta.$
\endproof

We illustrate the construction of Corollary~\ref{kG-max-aug}, Corollary~\ref{kG-shu} in terms of colored Hopf quivers by the following example.

\begin{example}\label{example}
Let $G=\mathbb{Z}_2=\<g\>$ with $Q=Q(\Z_2,R)$ and $\bar{Q}=Q(\Z_2,r)$ , where ramification data are given by $R=\{e\}+2\{g\}$ and $r=\{g\}.$ Denote the arrows $\alpha_i:e\xrightarrow[]{(i)} g,\ \beta_i:g\xrightarrow[]{(i)} e,\ \gamma:e\to e$ and $\rho:g\to g,\ i=1,2,$ as below.

\[\begin{tikzpicture}[descr/.style={fill=white},text height=1.5ex, text depth=0.25ex, arrow/.style={->}]
\node (a) at (0,0) {${\circ\atop e}$};
\node (b) at (2.5,0) {${\circ\atop g}$};
\path[->,font=\scriptsize,>=angle 90]
([yshift= 18pt]a.east) edge node[above] {$\alpha_1$} ([yshift= 18pt]b.west)
([yshift= 10pt]a.east) edge node[descr] {$\alpha_2$} ([yshift= 10pt]b.west)
([yshift= -2pt]b.west) edge node[descr] {$\beta_1$} ([yshift= -2pt]a.east)
([yshift=-10pt]b.west) edge node[below] {$\beta_2$} ([yshift=-10pt]a.east);
\draw [arrow] (a.west) ++(+0.15cm,10pt) arc [start angle=20, end angle=340, radius=0.5cm] node[left]{$\gamma\,\ \qquad$};
\draw [arrow] (b.east) ++(-0.15cm,10pt) arc [start angle=160, end angle=-160, radius=0.5cm] node[right]{$\,\ \qquad\rho$};
\end{tikzpicture}\] 

Consider the right-handed Hopf digraph-quiver triple $(\bar{Q}\subseteq Q,\cdot)$ with $\cdot$ given by \[\alpha_i\cdot g=(-1)^i\beta_i,\ \beta_i\cdot g=(-1)^i\alpha_i,\ \gamma\cdot g=-\rho,\text{ and }\ \rho\cdot g=-\gamma.\]The path super-Hopf algebra $kQ$  is the $k$-space with grading $|\alpha_i|=|\beta_i|=|\gamma|=|\rho|=1$ spanned by all the paths (e.g. $\alpha_2\rho\rho\beta_1$) of $Q$ with comultiplication given by de-concatenation
\begin{gather*}
\Delta e=e\tens e,\quad \Delta g=g\tens g,\quad \Delta\alpha_i=e\tens\alpha_i+\alpha_i\tens g,\quad \Delta\beta_i=g\tens\beta_i+\beta_i\tens e,\\
\Delta\gamma=e\tens \gamma+\gamma\tens e,\quad \Delta\rho=g\tens\rho+\rho\tens g,\quad \Delta(\beta_i\gamma)=g\tens\beta_i\gamma+\beta_i\tens\gamma+\beta_i\gamma\tens e,\ {\rm etc}\\
\epsilon(e)=1,\quad\epsilon(g)=1,\quad\epsilon(p)=0, \text{ for any nontrivial path $p$.}
\end{gather*}
The multiplication of $kQ$ is given by the quantum shuffle product. Between arrows in $kQ_1$, we have
\begin{gather*}
\alpha_1\cdot\alpha_1=0,\quad\alpha_1\cdot\alpha_2=\alpha_1\beta_2-\alpha_2\beta_1,
\quad\alpha_2\cdot\alpha_1=\alpha_1\beta_2+\alpha_2\beta_1,
\quad\alpha_2\cdot\alpha_2=2\alpha_2\beta_2,\\
\quad\gamma\cdot\gamma=0,\quad\gamma\cdot\alpha_i=\alpha_i\rho+\gamma\alpha_i,
\quad\alpha_i\cdot\gamma=\alpha_i\rho-\gamma\alpha_i,\\
\quad\gamma\cdot\beta_i=(-1)^i(\rho\beta_i+\beta_i\gamma),
\quad\beta_i\cdot\gamma=\beta_i\gamma-(-1)^i\rho\beta_2,\ {\rm etc}.
\end{gather*}
The left-invariant $1$-forms are $\Lambda^1=k\textrm{-span}\{\gamma,\alpha_i\}$ where $\Lambda^1_e=k\{\gamma\}$ and $\Lambda^1_g=k\{\alpha_1,\alpha_2\}$ with the coaction $\gamma\ra g=-\gamma,\ \alpha_i\ra g=(-1)^i\alpha_i,\ i=1,2.$ Then $kQ\cong k\Z_2\rbiprod \Sh_-(\Lambda^1)$ has cross relation $\gamma.g=-g.\gamma$ and $\alpha_i.g=(-1)^i g.\alpha_i.$ Let $\theta=\gamma\in\Lambda^1_e,$ then $(\Omega_{sh}(kG)=kQ,\extd=[\theta,\ \})$ is the inner strongly bicovariant differential graded algebra in Corollary~\ref{kG-shu}. In particular,
\begin{gather*}
\extd e=0,\quad\extd g=-2\rho\\
\extd\gamma=\extd\rho=0,\quad\extd \alpha_i=2\alpha_i\rho,\quad \extd\beta_i=(-1)^i2\rho\beta_i,\ \text{etc.}
\end{gather*}

Choose $\vartheta=\alpha_1^*+\beta_1^* \in  (\Lambda^1_g)^*,$ we can compute $\Omega_\vartheta(k\Z_2)=k\Z_2\rbiprod B_{\vartheta}(\Lambda^1)$ from the analysis before Corollary~\ref{kG-max-aug}. Note that $\bar{C}=\{g\}$ and $\bar{C}\bar{C}=\{e\},$ so
\begin{align*}
B_{\theta^*}(\Lambda^1)=\big\{&\sum_{g_1,\dots,g_n\in \Z_2 \atop i_1,\dots,i_n}\lambda_{g_1,\dots,g_n}^{i_1,\dots,i_n}e^{(i_1)}_{g_1}\tens e^{(i_2)}_{g_1}\tens\cdots \tens e^{(i_n)}_{g_n}\in \Sh_-(\Lambda^1)\ |\ & \\
\lambda_{g,g,g_1,\dots,g_{n-2}}^{1,1,i_1\dots,i_{n-2}} &=\lambda_{g_1,g,g,g_2,\dots,g_{n-2}}^{i_1,1,1,i_2\dots,i_{n-2}}=\dots =\lambda_{g_1,g_2,\dots,g_{n-2},g,g}^{i_1,i_2\dots,i_{n-2},1,1},\
\forall g_1,\dots g_{n-2},  i_1,\dots,i_{n-2}.\ \big\}
\end{align*} with $e^{(i_l)}_{g_l}\in\{\gamma,\alpha_1,\alpha_2\}.$ One can see that $\Omega^2=kQ_2,$ $\Omega^n\subsetneq kQ_n$ for any $n\ge 3$ and write down a specific basis for each degree. For instance, $\Omega^3_\vartheta(k\Z_2)$ has basis
\begin{align*}
(Q_3\setminus\{\alpha_1\beta_1\gamma,\,\gamma\alpha_1\beta_1,\,\alpha_1\beta_1\alpha_2,\,\alpha_2\beta_1\alpha_1,\linebreak \beta_1\alpha_1\rho,\,\rho\beta_1\alpha_1,\,\beta_1\alpha_1\beta_2,\,\beta_2\alpha_1\beta_2\})\\
\cup \{\alpha_1\beta_1\gamma+\gamma\alpha_1\beta_1,\,\alpha_1\beta_1\alpha_2+\alpha_2\beta_1\alpha_1,\,\beta_1\alpha_1\rho+\rho\beta_1\alpha_1,\linebreak \beta_1\alpha_1\beta_2+\beta_2\alpha_1\beta_2\}
\end{align*}
and $\dim \Omega^3_\vartheta(k\Z_2)=50<54=\dim kQ_3.$  Similarly, 
$\Omega^4_\vartheta(k\Z_2)$ is spanned by basis elements like
\begin{gather*}
\alpha_1\beta_1\gamma\gamma+\gamma\alpha_1\beta_1\gamma+\gamma\gamma\alpha_1\beta_1,\quad \alpha_1\beta_1\gamma\alpha_1+\gamma\alpha_1\beta_1\alpha_1,\quad \alpha_1\beta_1\gamma\alpha_2+\gamma\alpha_1\beta_1\alpha_2+\gamma\alpha_2\beta_1\alpha_1,\\
\alpha_1\beta_1\alpha_1\rho+\alpha_1\rho\alpha_1\beta_1,\quad\alpha_1\beta_1\alpha_2\rho+\alpha_2\beta_1\alpha_1\rho+\alpha_2\rho\beta_1\alpha_1,\\
\alpha_1\beta_1\alpha_1\beta_1,\quad \alpha_1\beta_1\alpha_1\beta_2+\alpha_1\beta_2\alpha_1\beta_1,\quad \alpha_1\beta_1\alpha_2\beta_1+\alpha_2\beta_1\alpha_1\beta_1,\\
\alpha_1\beta_1\alpha_2\beta_2+\alpha_2\beta_1\alpha_1\beta_2+\alpha_2\beta_2\alpha_1\beta_1, \text{ etc,}
\end{gather*}
and $\dim\Omega^4_\vartheta(k\Z_2)=138<162=\dim kQ_4.$
Thus $\Omega_\vartheta(k\Z_2)=k\Z_2\rbiprod B_{\vartheta}(\Lambda^1)$ is an infinite-dimensional coinner strongly bicovariant  codifferential graded algebra on $k\Z_2$ with the codifferential $i$ given by
\begin{gather*}
i(\alpha_1)=g-e,\quad i(\beta_1)=e-g,\quad i(\gamma)=i(\alpha_2)=i(\beta_2)=0\\
i(\alpha_1\beta_1)=\beta_1+\alpha_1,\quad i(\alpha_1\beta_2)=\beta_2,\quad i(\alpha_1\rho)=\rho,\ {\rm etc.}
\end{gather*}
According to Corollary~\ref{kG-max-aug}, $(\Omega_\vartheta(k\Z_2),i)$ is augmented with
inner differential given by $\extd=[\theta,\ \}$ with $\theta=\gamma.$ 
\end{example}

\section{Noncommutative  differential geometry on quivers}

In this section we look at elements of noncommutative Riemannian geometry for generalised differentials, with examples from the digraph-quiver pairs, generalising \cite{Ma:gra} to the quiver case. We also explain how quiver geometries naturally arise from discrete finite group quantum principal bundles in standard noncommutative differential geometry and we explore the noncommutative geometry of group algebras with the example of the symmetric group $S_3$, which is new even with a normal (not generalised) differential calculus and is dual to the previously known noncommutative Riemannian geometry of $k(S_3)$ from \cite{Ma:rieq}.  

\subsection{Metrics and connections} Here we recap the standard framework\cite{BegMa:starcomp, Ma:gra} with the small changes needed to extend to the generalised csse. Let $A$ be a unital algebra and $(\Omega^1,\extd)$ a generalised differential calculus over $A.$ A \textit{linear (left) connection} on a left $A$-module $E$ is a linear map $\nabla:E\to\Omega^1\tens_AE$ such that \[\nabla(a\omega)=\extd a\tens_A\omega+a\nabla\omega\]
for all $\omega\in E$, $a\in A.$  A connection $\nabla$ is called a \textit{(left) bimodule connection} if there exists a bimodule map $\sigma:E\tens_A\Omega^1\to \Omega^1\tens_AE$ such that 
\[\nabla(\omega a)=(\nabla\omega) a+\sigma(\omega\tens_A\extd a)\]
for all $\omega\in E$, $a\in A.$ Unlike for a standard calculus, the map $\sigma$ here is no longer fully determined by $\nabla$, thus the map $\sigma$ if it exists is additional data rather than a property of $\nabla$.  When $\Omega^2$ is defined, the \textit{curvature} of a connection $\nabla$ are defined by
\begin{gather}
R_\nabla:E\to\Omega^2\tens_AE,\quad R_\nabla=(\extd\tens\id-(\wedge\tens\id)(\id\tens\nabla))\nabla
\end{gather} 

\begin{lemma}\label{quiverrep} Let $X$ be a finite set and $\Omega^1(\bar Q,Q)$ the calculus on $A=k(X)$ associated to a digraph-quiver pair with vertex set $X$. 

(i) A left module $E$ means an $X$-graded space $E=\oplus_{x\in X} {}_xE$ and a left module with left connection $(E,\nabla)$ means a {\em quiver representatation} of $Q$ in the sense of a collection of maps $L_\beta:{}_{s(\beta)}E\to {}_{t(\beta)}E$ for all $\beta\in Q_1$. 

(ii) A bimodule $E$ means an $X$-$X$ bigraded space $E=\oplus_{x,y\in X}{}_xE_y$ and a bimodule with left bimodule connection $(E,\nabla,\sigma)$ means a left connection and $\sigma:E\tens_A \Omega^1\to \Omega^1\tens_A E$ a bimodule map satisfying $\sigma(v\tens_A \alpha)=-\sum_{\beta\in Q_1}\beta\tens_A L_\beta({}_{s(\beta)}v_{s(\alpha)})_{t(\alpha)}$ for all arrows $\alpha$ in the digraph $\bar{Q}$.
\end{lemma}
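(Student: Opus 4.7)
My plan leverages the orthogonal-idempotent decomposition $1 = \sum_{x\in X}\delta_x$ in $A = k(X)$ to reduce both parts of the lemma to arrow-by-arrow bookkeeping. A left $A$-module becomes $E = \bigoplus_x {}_xE$ with ${}_xE = \delta_xE$, and a bimodule becomes $E = \bigoplus_{x,y} {}_xE_y$. Using $\delta_x\beta = \delta_{x,s(\beta)}\beta$ and $\beta\delta_y = \delta_{y,t(\beta)}\beta$ in $\Omega^1 = kQ_1$, one has $\Omega^1 \tens_A E = \bigoplus_{\beta\in Q_1} k\beta \tens {}_{t(\beta)}E$ and similarly $E \tens_A \Omega^1 = \bigoplus_{\beta\in Q_1} E\delta_{s(\beta)} \tens k\beta$. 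Thus any connection expands uniquely as $\nabla v = \sum_\beta \beta \tens w_\beta(v)$ with $w_\beta(v) \in {}_{t(\beta)}E$.

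For (i), I would apply the Leibniz rule $\nabla(\delta_x v) = \extd\delta_x \tens v + \delta_x\nabla v$ with $v\in {}_yE$ for every $x$. Using
\[\extd\delta_x = \sum_{\beta\in\bar Q_1,\,t(\beta)=x}\beta \;-\; \sum_{\beta\in\bar Q_1,\,s(\beta)=x}\beta\]
together with the tensor relation $\beta \tens v = \delta_{t(\beta),y}\,\beta\tens v$, and exploiting that $\bar Q$ has no loops (so many cross-terms vanish), a coefficient comparison arrow-by-arrow forces $w_\beta(v) = v$ exactly when $\beta\in\bar Q_1$ and $t(\beta)=y$, forces $w_\beta(v) = 0$ when $s(\beta)\ne y$ outside this case, and leaves $w_\beta$ free as a $k$-linear map on ${}_{s(\beta)}E$. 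Setting $L_\beta := w_\beta|_{{}_{s(\beta)}E}$ gives the bijection with quiver representations $\{L_\beta\}_{\beta\in Q_1}$; conversely, any such family defines a left connection via the resulting explicit formula.

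For (ii), the key identity is
\[\delta_{s(\alpha)}\,\extd\delta_{t(\alpha)} = \alpha \qquad \forall\,\alpha\in\bar Q_1,\]
which follows by expanding $\extd\delta_{t(\alpha)}$ as above and using that $\bar Q$ is a digraph (no loops, no multiple arrows), so $\alpha$ is the unique arrow in $\bar Q$ from $s(\alpha)$ to $t(\alpha)$. Applying the bimodule-connection Leibniz with $a = \delta_{t(\alpha)}$ to $v\delta_{s(\alpha)}$ and using the bimodularity of $\sigma$ yields
\[\sigma(v\tens_A\alpha) = \sigma\bigl(v\delta_{s(\alpha)}\tens_A\extd\delta_{t(\alpha)}\bigr) = \nabla\bigl(v\delta_{s(\alpha)}\delta_{t(\alpha)}\bigr) - \nabla\bigl(v\delta_{s(\alpha)}\bigr)\delta_{t(\alpha)}.\]
Since $s(\alpha)\ne t(\alpha)$, the first term vanishes. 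Substituting the formula from (i) into $\nabla(v\delta_{s(\alpha)})$ (split by left-grading) and projecting on the right by $\delta_{t(\alpha)}$ annihilates the inner $\bar Q_1$-contribution of $\nabla$ (again because $s(\alpha)\ne t(\alpha)$), leaving exactly $-\sum_\beta \beta\tens_A \bigl(L_\beta({}_{s(\beta)}v_{s(\alpha)})\bigr)_{t(\alpha)}$, as claimed.

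The main obstacle I anticipate is verifying the converse: that any quiver representation $\{L_\beta\}$ together with any bimodule map $\sigma$ satisfying the stated formula on $\bar Q_1$ produces a bona fide bimodule connection. Since $A$ is spanned by $\{\delta_z\}$, one only needs $\nabla(v\delta_z) - \nabla(v)\delta_z = \sigma(v\tens_A\extd\delta_z)$ for each $z$; because $\extd\delta_z$ is a sum of arrows in $\bar Q_1$, the prescribed $\sigma$-formula and the formula for $\nabla$ collapse both sides to the same arrow-by-arrow expression, and the computation is essentially symmetric to the forward direction but demands careful tracking of both left and right gradings. Notably, the values of $\sigma$ on $Q_1\setminus\bar Q_1$ are genuinely unrestricted, consistent with the fact that for generalised calculi $\sigma$ carries information not determined by $\nabla$.
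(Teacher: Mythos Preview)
Your proposal is correct and follows essentially the same approach as the paper: both arguments exploit the idempotent decomposition of $k(X)$ and the arrow-basis of $kQ_1$ to reduce the Leibniz rules to coefficient comparisons, arriving at the same formula $\nabla v=\sum_{\alpha\in\bar Q_1}\alpha\tens_A{}_{t(\alpha)}v+\sum_{\beta\in Q_1}\beta\tens_A L_\beta({}_{s(\beta)}v)$ and the same computation of $\nabla(va)-(\nabla v)a$. Your explicit use of the identity $\delta_{s(\alpha)}\,\extd\delta_{t(\alpha)}=\alpha$ for $\alpha\in\bar Q_1$ is a clean way to isolate $\sigma(v\tens_A\alpha)$ that the paper leaves implicit, but the underlying calculation is the same.
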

\proof (i) Recall that a representation of a quiver $Q$ means an assignment of vector spaces ${}_xE$ to each vertex $x\in Q_0=X$ and of a linear map $L_{\beta}:{}_{s(\beta)}E\to {}_{t(\beta)}E$ for each arrow $\beta\in Q_1$. We identify this information with 
\[\nabla v= \sum_{\alpha\in \bar{Q}_1}\alpha\tens_A {}_{t(\alpha)}v+\sum_{\beta\in Q_1}\beta\tens_A L_\beta ({}_{s(\beta)}v).\]
where ${}_xv$ is the component of $v$ in ${}_xE$. We check that
\[ \nabla (av)=\sum_{\alpha\in \bar{Q}_1}\alpha\tens_A a(t(\alpha)){}_{t(\alpha)}v+\sum_{\beta\in Q_1}\beta\tens_A L_\beta (a(s(\beta)){}_{s(\beta)}v)=a\nabla v+\sum_{\alpha\in \bar{Q}_1}(a(t(\alpha))-a(s(\alpha)))\alpha\tens_A v\]
which is $a\nabla v+\extd a\tens_A v$ as required. (ii) To check the bimodule connection property we assume a bigrading. Then
\[ \nabla (va)-(\nabla v)a=\sum_{\beta\in Q_1}\sum_{z\in X} a(z) \beta\tens_A \left( L_\beta({}_{s(\beta)}\!v_z)-L_\beta({}_{s(\beta)}\! v)\delta_z \right)=\sigma(v\tens_A\extd a) \]
needs to be well-defined and requires that $\sigma({}_x\! v_y\tens_A \alpha)=\sigma({}_x\! v_y\tens \extd \delta_z)=-\sum_{\beta\in Q_1 \atop s(\beta)=x}\beta\tens L_{\beta}({}_x\! v_y)\delta_z$ for all ${}_x\! v_y$ in ${}_xE_{y}$ and $\alpha:y\to z$ in $\bar{Q}_1.$
\endproof

If we fix $\Omega^2$ then having zero curvature of a connection can be interpreted as a certain composition property of a some quiver representations according to this lemma. We are particularly interested in the case of a so-called `linear connection' where $E=\Omega^1$ in which case the \textit{torsion} is defined by
\[ T_\nabla:\Omega^1\to\Omega^2,\quad T_\nabla=\wedge\nabla-\extd.\]
Note that $T_\nabla$ and $R_\nabla$ are both left $A$-module maps by construction. A \textit{metric} is defined to be an element $g\in\Omega^1\tens_A\Omega^1$ together with a bimodule map $(\ ,\ ):\Omega^1\tens_A\Omega^1\to A$ such that
\begin{equation}\label{centralmetric}
g\bo\tens_A (g\bt,\omega)=\omega,\quad (\omega,g\bo)\tens_A g\bt=\omega,\quad \forall\,\omega\in\Omega^1,
\end{equation}
where $g=g\bo\tens g\bt.$ This is equivalent to saying that $\Omega^1$ is a left and right self-dual in the monoidal category of $A$-bimodules. As for standard differential calculi, this requires that $g$ be \textit{central} in $\Omega^1\tens_A\Omega^1,$ i.e. $ag=ga$ for any $a\in A.$
When there is a metric $g,$ we say that $\nabla$ is \textit{skew-metric-compatible} or \textit{cotorsion-free} if 
\begin{equation}\label{s-m-c}
\left(\extd\tens\id-(\wedge\tens\id)(\id\tens\nabla)\right)g=0.
\end{equation}
A torsion-free and skew-metric-compatible connection is called a \textit{weak Levi-Civita connection}. Such a connection does not necessarily exist and if it does may not be unique.

\begin{lemma}
The torsion of a bimodule connection on $\Omega^1$ is a bimodule map if  
\begin{equation*}
\im (\id+\sigma)\subseteq \ker (\wedge:\Omega^1\tens_A\Omega^1\to\Omega^2). 
\end{equation*} 
(we say that $\nabla$ is {\em torsion compatible} when this inclusion holds). 
\end{lemma}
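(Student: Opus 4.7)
The plan is a direct computation verifying both module-map properties for $T_\nabla=\wedge\nabla-\extd$. Left $A$-linearity is automatic from the two left Leibniz rules: applying $\wedge$ to $\nabla(a\omega)=a\nabla\omega+\extd a\tens_A\omega$ gives $a\,\wedge\nabla\omega+\extd a\wedge\omega$, and this cancels the $\extd a\wedge\omega$ term in $\extd(a\omega)=\extd a\wedge\omega+a\,\extd\omega$ (super-Leibniz with $(-1)^{|a|}=+1$). Hence $T_\nabla(a\omega)=a\,T_\nabla(\omega)$.

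For right $A$-linearity, I would expand using the bimodule connection property and the super-Leibniz rule (now with the sign $(-1)^{|\omega|}=-1$ for $\omega\in\Omega^1$),
\begin{align*}
\wedge\nabla(\omega a) &= (\wedge\nabla\omega)\,a + \wedge\sigma(\omega\tens_A\extd a),\\
\extd(\omega a) &= (\extd\omega)\,a - \omega\wedge\extd a.
\end{align*}
Subtracting yields
\[
T_\nabla(\omega a)-T_\nabla(\omega)\,a \;=\; \wedge\sigma(\omega\tens_A\extd a) + \omega\wedge\extd a \;=\; \wedge\bigl((\id+\sigma)(\omega\tens_A\extd a)\bigr),
\]
which vanishes for all $\omega\in\Omega^1,a\in A$ under the hypothesis $\im(\id+\sigma)\subseteq\ker\wedge$. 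This proves $T_\nabla$ is also a right $A$-module map, hence a bimodule map.

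There is no real obstacle: the computation is essentially forced once one spells out the definitions. The only point to watch is the sign in the super-Leibniz rule, which is precisely what makes the obstruction to right-linearity equal to $\wedge$ applied to $(\id+\sigma)(\omega\tens_A\extd a)$, matching the hypothesis exactly. Note that elements of the form $\omega\tens_A\extd a$ need not span all of $\Omega^1\tens_A\Omega^1$, so the inclusion condition is only sufficient in general, consistent with the lemma as stated.
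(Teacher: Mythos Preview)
Your proof is correct and follows essentially the same approach as the paper: the paper states that left linearity is automatic by construction and that a light computation gives $T_\nabla(\omega a)-T_\nabla(\omega)a=\wedge(\id+\sigma)(\omega\tens_A\extd a)$, which is exactly the formula you derive. Your write-up is in fact more detailed than the paper's, and your closing remark that the condition is only sufficient (since $\omega\tens_A\extd a$ need not span) matches the paper's observation that the condition is needed ``at least on exact forms''.
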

\proof These are both constructed to be left module maps. Light computation shows
$T_\nabla(\omega a)-T_\nabla(\omega)a=\wedge(\id+\sigma)(\omega\tens_A\extd a)$ so we require the condition 
at least on exact forms $\extd a$. 
\endproof

Imposing this on all of $\Omega^1\tens_A\Omega^1$ is a little stronger on a generalised calculus compared to the standard case and hence and torsion-free (where zero is obviously a bimodule map) no longer implies torsion-compatible in the stronger sense stated.  Next, a bimodule connection $\nabla$ naturally extends to $1$-$1$-forms, namely
\begin{equation}\label{m-c}
\nabla g=(\nabla g\bo)\tens_A g\bt+(\sigma\tens\id)(g\bo\tens_A\nabla g\bt),
\end{equation}
for $g=g\bo\tens g\bt\in\Omega^1\tens_A\Omega^1,$ and we say that $(\nabla,\sigma)$ is \textit{metric-compatible} in the case of a metric $g$ if $\nabla g=0.$ A bimodule connection $(\nabla,\sigma)$ will be called \textit{Levi-Civita} if it is torsion-free, torsion-compatible and metric-compatible. If it exists it may not be unique and will typically have curvature. A bimodule connection $(\nabla,\sigma)$ will be called \textit{Maurer-Cartan} if it is metric-compatible and flat (has zero curvature). It this exists it may not be unique and will typically have torsion. If $\nabla$ is torsion-free and torsion-compatible, then (\ref{s-m-c}) reduces to $(\wedge\tens\id)\nabla g=0$ 
which is weaker than being metric-compatible and justifies the term `skew-metric-compatible' for cotorsion-freeness in the notion of a  weak Levi-Civita connection.

Let $A$ be an algebra and now let $\Omega^1$ be an inner generalised differential structure on $A$. The corresponding inner data may not unique in $\Omega^1$ but it is uniquely determined by some element $[\theta]\in\Omega^1/Z(\Omega^1),$ where 
$Z(\Omega^1)=\{\eta\in\Omega^1\big|\,a\eta=\eta a,\,\forall\,a\in A\}.$

\begin{lemma}\label{inner-con}
Let $A$ be an algebra and let $\Omega^1$ be an inner differential structure on $A$. Fix a representative $\theta
\in\Omega^1$ for the inner data.
\begin{itemize}
\item[1)] Bimodule connections $(\nabla,\sigma)$ are in one-to-one correspondence with pairs $(\sigma,\alpha)$
\[\sigma:\Omega^1\tens_A\Omega^1\to \Omega^1\tens_A\Omega^1,\quad\alpha:\Omega^1\to\Omega^1\tens_A\Omega^1\]
of bimodule maps and take the form
\begin{equation*}\label{innernabla}
\nabla\omega=\theta\tens_A\omega-\sigma(\omega\tens_A\theta)+\alpha\omega.
\end{equation*}
\item[2)] Given a pair $(\sigma,\alpha)$ as in 1), for $g\in\Omega^1\tens_A\Omega^1,$ we have
\[\nabla g=\theta\tens_A g-\sigma_{12}\sigma_{23}(g\tens_A\theta)+(\alpha\tens\id+(\sigma\tens\id)(\id\tens\alpha))g.\]
\item[3)] If $\Omega^1$ extends to $\Omega^2$ with $\theta\wedge\theta\in Z(\Omega^2)$ and $\extd \omega=\theta\wedge\omega+\omega\wedge\theta$ for all $\omega\in\Omega^1,$ then 
\begin{align*}
T_\nabla\omega&=-\wedge (\id+\sigma)(\omega\tens_A\theta)+\wedge\alpha\omega,\\
R_\nabla\omega&=\theta\wedge\theta\tens_A\omega+(\wedge\tens\id) \tilde{R}_\nabla\omega,\nonumber\\
\tilde{R}_\nabla\omega&=-\sigma_{23}\sigma_{12}(\omega\tens_A\theta\tens_A\theta)\nonumber\\
&{}\qquad+\left(\sigma_{23}(\alpha\tens\id)+(\id\tens\alpha)\sigma\right)(\omega\tens_A\theta)-(\id\tens\alpha)\alpha\omega.\nonumber
\end{align*}

\item[4)] In the case of 3), $(\nabla,\sigma)$ is torsion-free if $\wedge(\id+\sigma)\big|_{\Omega^1\tens_A\theta}=0$ and $\wedge\alpha=0.$ When $\theta\in\bar{\Omega}^1,$ the converse is also true.

\item[5)] Suppose the characteristic of the ground field is not 2, $\sigma(\theta\tens_A\theta)=\theta\tens_A\theta,$ $\alpha=0$ and $\theta\in\bar{\Omega}^1$. Then $(\nabla,\sigma)$ in part 1)  torsion-free and with $\sigma$ obeying the braid relations implies $R_\nabla=0$.
\end{itemize}
\end{lemma}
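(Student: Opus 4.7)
The plan is to combine torsion-free with the Yang--Baxter relation and the invariance of $\theta\tens\theta$ under $\sigma$ to collapse both pieces of the curvature formula from part~3).

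First I would deduce $\theta\wedge\theta=0$.  The torsion-free identity $\wedge(\id+\sigma)\big|_{\Omega^1\tens_A\theta}=0$ of part~4), applied with $u=\theta$ and using $\sigma(\theta\tens\theta)=\theta\tens\theta$, gives $2\theta\wedge\theta=0$; the characteristic hypothesis then forces $\theta\wedge\theta=0$.  Substituting $\alpha=0$ and $\theta\wedge\theta=0$ into the curvature formula of part~3) reduces the proof to showing $(\wedge\tens\id)u'=0$, where
\[
u' \;:=\; \sigma_{23}\sigma_{12}(\omega\tens_A\theta\tens_A\theta).
\]

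Next I would observe that $\sigma_{23}$ fixes $\omega\tens\theta\tens\theta$ (again by $\sigma(\theta\tens\theta)=\theta\tens\theta$), hence $u'=\sigma_{23}\sigma_{12}\sigma_{23}(\omega\tens\theta\tens\theta)$, and the Yang--Baxter relation $\sigma_{23}\sigma_{12}\sigma_{23}=\sigma_{12}\sigma_{23}\sigma_{12}$ then identifies this with $\sigma_{12}(u')$.  Thus $\sigma_{12}(u')=u'$, and applying $(\wedge\tens\id)$ to $2u'=(\id+\sigma_{12})u'$ gives
\[
2(\wedge\tens\id)u' \;=\; (\wedge(\id+\sigma)\tens\id)u'.
\]

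Finally, because $\theta\in\bar\Omega^1$ makes $\bar\Omega^1$ the sub-bimodule of $\Omega^1$ generated by $\theta$, the torsion-free condition extends by bimodule linearity to $\wedge(\id+\sigma)\big|_{\Omega^1\tens_A\bar\Omega^1}=0$.  Using the presentation $\theta=\sum_ka_k\extd b_k$ together with the bimodule property of $\sigma$ and the relation $\sigma_{12}(u')=u'$ from the previous step, $u'$ can be re-written as an element of $\Omega^1\tens_A\bar\Omega^1\tens_A\Omega^1$, so the extended torsion-free identity kills the right hand side of the displayed equation.  Since the characteristic is not~$2$, we conclude $(\wedge\tens\id)u'=0$ and hence $R_\nabla=0$.

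The main obstacle I expect is the last step: carefully showing that $u'$ admits a presentation whose middle factor lies in $\bar\Omega^1$.  This is where the hypothesis $\theta\in\bar\Omega^1$ is essentially used, and the argument must delicately exploit the $\sigma_{12}$-fixed structure of $u'$ together with the bimodule operations on the triple tensor product $\Omega^1\tens_A\Omega^1\tens_A\Omega^1$; everything else is a routine assembly of the braid identity, torsion-free, the invariance of $\theta\tens\theta$, and the characteristic assumption.
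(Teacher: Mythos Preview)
Your argument is essentially the paper's. The paper also first derives $\theta\wedge\theta=0$ from $\wedge(\id+\sigma)(\theta\tens_A\theta)=0$ and the characteristic hypothesis, then inserts $\sigma_{23}$ and applies the braid relation exactly as you do; the only cosmetic difference is that instead of writing $2(\wedge\tens\id)u'=(\wedge(\id+\sigma)\tens\id)u'$, the paper uses the equivalent reformulation $(\wedge\tens\id)\sigma_{12}u'=-(\wedge\tens\id)u'$ to conclude $R_\nabla\omega=-R_\nabla\omega$ and hence $R_\nabla\omega=0$.

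Regarding the ``main obstacle'' you isolate: the paper does not address it. It simply passes from $-(\wedge\tens\id)\sigma_{12}\sigma_{23}\sigma_{12}(\omega\tens_A\theta\tens_A\theta)$ to $(\wedge\tens\id)\sigma_{23}\sigma_{12}(\omega\tens_A\theta\tens_A\theta)$ by applying $\wedge\sigma=-\wedge$ on the first two tensor factors of $u'$, without checking that those factors lie in $\Omega^1\tens_A\bar\Omega^1$. So the subtlety you flag---that torsion-freeness only yields $\wedge(\id+\sigma)\big|_{\Omega^1\tens_A\bar\Omega^1}=0$ in the generalised setting---is glossed over in the paper as well; you are not missing a trick the authors supply. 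In the standard case $\bar\Omega^1=\Omega^1$ the issue disappears, and the lemma is attributed to the graph-calculus setting of \cite{Ma:gra} where this holds.
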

\proof This lemma is more or less \cite[Theorem 2.1]{Ma:gra}. For completeness, we still provide the sketch of the proof. 1) Given $(\nabla,\alpha),$ one can check that $(\nabla,\sigma)$ defined in 1)  indeed form a bimodule connection. Conversely, let $(\nabla,\sigma)$ be a bimodule connection. Clearly, $\nabla^0\omega=\theta\tens_A\omega-\sigma(\omega\tens_A\theta)$ is a connection and $\nabla-\nabla^0$ is a bimodule map, which is taken as $\alpha.$
2) The formulae displayed are straightforward computed from 1)  by definition. In fact,
\begin{align*}
\nabla g&=\nabla(g\bo)\tens_A g\bt+(\sigma\tens\id)(g\bo\tens_A\nabla g\bt)\\
&=\left(\theta\tens_A g\bo-\sigma(g\bo\tens_A\theta)+\alpha g\bo\right)\tens_A g\bt\\
&{}\quad +(\sigma\tens\id)\left( g\bo\tens_A\left(\theta\tens_A g\bt-\sigma(g\bt\tens_A\theta)+\alpha g\bt\right)\right)\\
&=\theta\tens_A g-\sigma_{12}\sigma_{23}(g\tens_A\theta)+\left(\alpha\tens\id+(\sigma\tens\id)(\id\tens\alpha)\right)g.
\end{align*}

3) It is straightforward to check that 
\begin{align*}
T_\nabla(w)&=\wedge(\theta\tens_A\omega-\sigma(\omega\tens_A\theta)+\alpha\omega)-\extd\omega\\
&=-\omega\wedge\theta-\wedge\sigma(\omega\tens_A\theta)+\wedge\alpha\omega\\
&=-\wedge(\id+\sigma)(\omega\tens_A\theta)+\wedge\alpha\omega,
\end{align*}
and
\begin{align*}
R_\nabla&=(\extd\tens\id-(\wedge\tens\id)(\id\tens\nabla))(\theta\tens_A\omega-\sigma(\omega\tens_A\theta)+\alpha\theta)\\
&=\theta\wedge\theta\tens_A\omega-\sigma^1\wedge\sigma(\sigma^2\tens_A\theta)+\sigma^1\wedge\alpha(\sigma^2)
+\alpha^1\wedge\sigma(\alpha^2\tens_A\theta)-\alpha^1\wedge\alpha(\alpha^2)\\
&=\theta\wedge\theta\tens_A\omega-(\wedge\tens\id)\sigma_{23}\sigma_{12}(\omega\tens_A\theta\tens_A\theta)\\
&{}\qquad+(\wedge\tens\id)\left(\sigma_{23}(\alpha\tens\id)+(\id\tens\alpha)\sigma\right)(\omega\tens_A\theta)-(\wedge\tens\id)(\id\tens\alpha)\alpha\omega,
\end{align*}
where we denote $\sigma^1\tens_A\sigma^2=\sigma(\omega\tens_A\theta)$ and $\alpha^1\tens\alpha^2=\alpha\omega$ is a shorthand notation.

4) The `if' statement is obvious from 3). We know that if $\nabla$ torsion-free, then $\wedge(\id+\sigma)\big|_{\Omega^1\tens_A\bar{\Omega}^1}=0.$ So when $\theta\in\bar{\Omega}^1$, we have $\wedge(\id+\sigma)\big|_{\Omega^1\tens_A\theta}=0.$  This together with the formula in 3) implies that $\wedge\alpha=0.$ Therefore the `only if' part is also true.

5) Clearly, $0=\wedge(\id+\sigma)(\theta\tens_A\theta)=2\theta\wedge\theta$ implies $\theta\wedge\theta=0, $ if the characteristic is not $2$. Therefore
\begin{align*}
R_\nabla\omega&=(\wedge\tens\id)\tilde{R}_\nabla\omega=-(\wedge\tens\id)\sigma_{23}\sigma_{12}(\omega\tens_A\theta\tens_A\theta)\\
&=-(\wedge\tens\id)\sigma_{23}\sigma_{12}\sigma_{23}(\omega\tens_A\theta\tens_A\theta)\\
&=-(\wedge\tens\id)\sigma_{12}\sigma_{23}\sigma_{12}(\omega\tens_A\theta\tens_A\theta)\\
&=(\wedge\tens\id)\sigma_{23}\sigma_{12}(\omega\tens_A\theta\tens_A\theta)\\
&=-R_\nabla\omega
\end{align*}
and hence vanishes under our assumptions. 
\endproof

According to the analysis above, we know that if $\theta'$ is another representative for the same inner calculus then $\Delta\theta=\theta'-\theta\in Z(\Omega^1)$ and $\alpha_{\Delta \theta}(\omega)=\Delta\theta\tens_A\omega-\sigma(\omega\tens_A\Delta\theta)$ is an $A$-bimodule map. It is easy to see that $(\theta,\sigma,\alpha)$ and $(\theta',\sigma,\alpha-\alpha_{\Delta\theta})$ provide the same bimodule connection, thus the inner form in Lemma~\ref{inner-con}  of a bimodule connection does not depend on the choice of represetative of inner data. Moreover, if $\Delta\theta\wedge\omega+\omega\wedge\Delta\theta=0$ for all $\omega\in\Omega^1$, then $\extd\omega=\theta\wedge\omega+\omega\wedge\theta=\theta'\wedge\omega+\omega\wedge\theta'.$ In this case, the bimodule connections associated to $(\theta,\sigma,\alpha)$ and $(\theta+\Delta\theta,\sigma,\alpha-\alpha_{\Delta\theta})$ have the same torsion and curvature. 

Finally, the geometric Laplace-Beltrami operator given a connection $\nabla$ on $\Omega^1$ and an `inverse metric' bimodule map $(\ ,\ )$ is defined to be $\Delta=(\ ,  )\nabla\extd$. This may or may not coincide with a 2nd order operator $\Delta_\theta a=2(\theta,\extd a)$ defined for any inner calculus and obeying $\Delta_\theta(ab)=(\Delta_\theta a)b+2 (\extd a,\extd b)+a\Delta_\theta b$. These notions are the same as the standard case in \cite{Ma:gra}.

\begin{example} Let $X$ be a finite set and $\Omega^1(\bar Q,Q)$ the calculus on $A=k(X)$ associated to a digraph-quiver pair with vertex set $X$ which is inner as in (\ref{quivcalc}). A bimodule connection on $\Omega^1$ by the above is given by (i) a bimodule map $\alpha$ which we can write as $\alpha=\sum_{\gamma\in Q_1}\gamma\tens_A N_\gamma$ for $N_\gamma:{}_{s(\gamma)}\Omega^1_y\to {}_{t(\gamma)}\Omega^1_y$ for all $y\in X$ and (ii) a bimodule map $\sigma$ which we can write as $\sigma((\ )\tens_A\beta)=\sum_{\gamma\in Q_1}\gamma\tens_A N_\gamma^\beta$ for $N_\gamma^\beta:{}_{s(\gamma)}\Omega^1_{s(\beta)}\to {}_{t(\gamma)}\Omega^1_{t(\beta)}$ for all $\beta\in Q_1$. Then the associated quiver representation is $L_\gamma=N_\gamma - \sum_{\alpha\in \bar Q_1}N_\gamma^\alpha$. 
One can check that we recover $\sigma((\ )\tens_A\alpha)$ from the formula in Lemma~\ref{quiverrep}. 
\end{example}

\subsection{Geometry of $\C S_3$}

We specialise the general theory of metrics and connections above to the case of $\C S_3$  with generators $u,v$ with $u^2=v^2=e$ and $uvu=vuv$, based on the natural construction in Lemma~\ref{CG}. We construct calculi using Lemma~\ref{CG} from representations of the group: (i) the trivial representation of $S_3$ gives the zero calculus; (ii) the sign representation $\rho(u)=\rho(v)=\rho(w)=-1$ where $w=uvu$ (and others $+1$) gives a 1-dimensional calculus with generator $\theta$ and relations
\[ \theta x= \rho(x)x\theta,\quad \extd u=-2  u\theta,\quad\extd v=-2v\theta,\quad \extd w=-2 w\theta\]
and zero otherwise. This is a standard calculus but not connected as $\extd(uv)=\extd(vu)=0$. There is a quantum metric $g=\theta\tens\theta$ which is clearly central and nondegenerate. The associated $\Delta_\theta x=2(\theta,\theta x-x\theta)=2(1-\rho(x)x$ so is diagonalised by conjugacy classes with eigenvalues 4 for $u,v,w$ and 0 for $uv,vu$ and $e$; (iii) finally we have the 2-dimensional representation which we take in the form
\[ \rho(u)=\begin{pmatrix}1&0\cr 0 & -1\end{pmatrix},\quad \rho(v)={1\over 2}\begin{pmatrix}-1  &\sqrt{3}  \cr \sqrt{3}& 1\end{pmatrix}. \]
To get a feel for this differential calculus we let
\[ e_u=u^{-1}\extd u=\begin{pmatrix}0&0\cr 0 & -2\end{pmatrix},\quad e_v=v^{-1}\extd v={1\over 2}\begin{pmatrix}-3&\sqrt{3}\cr \sqrt{3} & -1\end{pmatrix}\]
\[ e_{uv}=(uv)^{-1}\extd(uv)={1\over 2}\begin{pmatrix}-3&\sqrt{3}\cr -\sqrt{3} & -3\end{pmatrix},\quad e_{vu}=(vu)^{-1}\extd (vu)={1\over 2}\begin{pmatrix}-3&-\sqrt{3}\cr \sqrt{3} & -3\end{pmatrix},\]
which form a basis, so this is a standard calculus. Here $e_u+e_v+e_w=e_{uv}+e_{vu}=-3\theta$ where $e_w$ is defined similarly. One has relations
\[ e_u u=-u e_u,\quad e_u v=v( e_{uv}-e_v),\quad e_v u=u(e_{vu}-e_u),\quad e_v v=-v e_v\]
\[ e_{uv}u=u(e_w-e_u),\quad e_{uv}v=v(e_u-e_v),\quad e_{vu}u=u(e_v-e_u),\quad e_{vu}v=v(e_w-e_v).\]
Here the adjoint action of $S_3$ on $\C S_3$ is by permutation of $u,v,w$ and the calculus is covariant under this, in addition to the bicovariance with respect to the Hopf algebra coproduct.  Indeed, the calculus from Lemma~\ref{CG} is necessarily covariant under the adjoint action which here is generated by $\Ad_w$ which swaps $e_u,e_v$ and $e_{uv},e_{vu}$) and by $\Ad_{uv}$ which cyclically rotates $e_u\to e_v\to e_w\to e_u$. We look at the noncommutative geometry in more detail in this calculus, with focus on ad-invariant geometries. The exterior algebra $\Omega$ is generated by the above and the $\{e_i\}$ anticommutative amongst themselves.

\begin{proposition} There is a quantum metric on $\Omega^1(\C S_3)$ given by
\[ g=e_u\tens e_u+e_v\tens e_v+e_w\tens e_w+e_{uv}\tens e_{uv}+e_{vu}\tens e_{vu}- 6\theta\tens\theta,\]
which is also invariant under the adjoint action of the group. There is a unique ad-invariant quantum Levi-Civita connection, namely $\sigma={\rm flip}$ and $\nabla e_i=0$. More generally, there is a 1-parameter family of quantum torsion free cotorsion free ad-invariant connections with $\sigma={\rm flip}$, namely 
\[ \nabla e_u=\lambda(\begin{pmatrix}e_u & e_v\end{pmatrix}\begin{pmatrix}-1 & 1 \cr 1 & 2\end{pmatrix}\begin{pmatrix}e_u\cr e_v\end{pmatrix} +3(e_v\tens\theta+\theta\tens e_v)+e_{uv}\tens e_{vu}+e_{vu}\tens e_{uv})\]
\[ \nabla e_v=\lambda(\begin{pmatrix}e_u & e_v\end{pmatrix}\begin{pmatrix}2 & 1 \cr 1 & -1\end{pmatrix}\begin{pmatrix}e_u\cr e_v\end{pmatrix} +3(e_u\tens\theta+\theta\tens e_u)+e_{uv}\tens e_{vu}+e_{vu}\tens e_{uv})\]
\[ \nabla e_{uv}=\lambda((e_{vu}-e_{uv})\tens e_{uv}+ e_{uv}\tens e_{vu}),\quad \nabla e_{vu}=\lambda((e_{uv}-e_{vu})\tens e_{vu}+e_{vu}\tens e_{uv}).\]
This connection has Riemann curvature
\[ R_\nabla e_u=\lambda^2(e_{uv}e_{vu}\tens(e_{vu}-e_{uv})+3(e_ue_v+(e_u-e_v)\theta)\tens (e_v-e_w))\]
\[ R_\nabla e_v=\lambda^2(e_{uv}e_{vu}\tens(e_{vu}-e_{uv})+3(e_ue_v+(e_u-e_v)\theta)\tens (e_w-e_u))\]
\[ R_\nabla e_{uv}=\lambda^2 e_{uv}e_{vu}\tens(2e_{vu}-e_{uv}),\quad R_\nabla e_{vu}=-\lambda^2e_{uv}e_{vu}\tens (2e_{uv}-e_{vu}).\]
and $\Delta=\Delta_\theta$ for all $\lambda$. Allowing general $\sigma$, there is a 16-parameter moduli of torsion and cotorsion free connections of which a 3-parameter part is ad-invariant. \end{proposition}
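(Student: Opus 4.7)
The plan is to reduce everything to finite linear algebra on $\Lambda^1=\End(V)$ via Lemma~\ref{inner-con}, exploiting that $\Lambda^1$ sits entirely in degree $e$ and that the Woronowicz braiding on $\Lambda^1\tens\Lambda^1$ is therefore the ordinary flip. This makes $B_-(\Lambda^1)=\Lambda(\Lambda^1)$ the usual Grassmann algebra, so $\theta\wedge\theta=0$ and $\extd\omega=\theta\wedge\omega+\omega\wedge\theta=0$ for every left-invariant $\omega$. The flip is a bimodule map on $\Omega^1\tens_A\Omega^1$ for any bicovariant calculus on $kG$ since $\omega g=g(\omega\ra g)$ transfers symmetrically across the tensor. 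For the metric, I would verify centrality by showing $g$ is invariant under the diagonal right action $(\omega\tens\eta)\ra h=\omega\rho(h)\tens\eta\rho(h)$; using $e_x\ra h=e_{xh}-e_h$ and $\theta\ra h=1+e_h$, the coefficient $-6$ is singled out as the unique value that cancels the cross-terms. Ad-invariance is checked analogously under the conjugation action, and nondegeneracy follows by exhibiting the dual pairing $(\ ,\ )$ as a scalar multiple of the trace on $\End(V)$.

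For the $\sigma=\mathrm{flip}$ case, Lemma~\ref{inner-con}(1) gives $\nabla\omega=\theta\tens\omega-\sigma(\omega\tens\theta)+\alpha\omega=\alpha\omega$ on left-invariant $\omega$, because $\sigma(\omega\tens\theta)=\theta\tens\omega$. Ad-invariance plus bicovariance reduces $\alpha$ to an $S_3$-equivariant linear map $\Lambda^1\to\Lambda^1\tens\Lambda^1$ for the conjugation action. Decomposing $\Lambda^1=\End(V)\cong\mathrm{triv}\oplus\mathrm{sgn}\oplus V$ and $\Lambda^1\tens\Lambda^1$ accordingly, Schur's lemma gives a finite-dimensional moduli of candidate $\alpha$. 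Lemma~\ref{inner-con}(4) identifies the torsion-free condition with $\wedge\alpha=0$ (using $\extd\omega=0$), i.e. $\alpha$ landing in $S^2\Lambda^1$, and the cotorsion-free condition from Lemma~\ref{inner-con}(2) reduces to the vanishing of $(\wedge\tens\id)(\sigma\tens\id)(\id\tens\alpha)g$ (the other piece already vanishing by torsion-freedom). Decomposing $g$ into $S_3$-isotypic components turns this into a linear system cutting the space to a single line, which I would parameterise by $\lambda$; the explicit matrix formulas for $\nabla e_u,\nabla e_v,\nabla e_{uv},\nabla e_{vu}$ are then read off using $\Ad_{uv}$ to cycle $\{u,v,w\}$ and $\Ad_w$ to swap $\{uv,vu\}$, which determines the four generators from one of them.

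Full metric-compatibility $\nabla g=0$ adds the further condition that the second piece of $\nabla g$ itself (not only after wedging) vanishes; in the ad-invariant setting this pins $\alpha=0$, giving the unique Levi-Civita connection with $\nabla e_i=0$. For the curvature, I would specialise Lemma~\ref{inner-con}(3): $\theta\wedge\theta=0$ kills the first term, and since $\sigma=\mathrm{flip}$ makes $\sigma_{23}\sigma_{12}(\omega\tens\theta\tens\theta)=\theta\tens\theta\tens\omega$ also wedge-trivial, only the $\alpha$-dependent parts of $\tilde R_\nabla$ survive; substituting the explicit $\alpha$ and using $e_u+e_v+e_w=-3\theta=e_{uv}+e_{vu}$ to repackage terms reproduces the stated $R_\nabla$. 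For the Laplacian I would compute $\Delta a=(\ ,\ )\nabla\extd a=(\ ,\ )\alpha(\extd a)$: since $\alpha$ takes values in $S^2\Lambda^1$ and $g$ is symmetric, the $\alpha$-contribution is an $S_3$-invariant scalar whose value on each component of $\Hom_{S_3}(\Lambda^1,S^2\Lambda^1)$ can be evaluated directly, leaving $\Delta=\Delta_\theta=2(\theta,\extd\,\cdot)$ independently of $\lambda$.

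Finally, for general bimodule $\sigma$, I would note that $\sigma$ on $\Omega^1\tens_A\Omega^1$ corresponds to a right-$A$-equivariant linear map on $\Lambda^1\tens\Lambda^1$, and the torsion-free condition $\wedge(\id+\sigma)|_{\Omega^1\tens\theta}=\wedge\alpha$ from Lemma~\ref{inner-con}(4) now couples $\sigma$ and $\alpha$ nontrivially. Enumerating solutions by decomposing $\Lambda^1\tens\Lambda^1$ under the right $A$-action (to get the full moduli) and then under the additional conjugation action (to cut down to the ad-invariant part), using the Schur multiplicities of $V$ inside $\End(V)\tens\End(V)$, will yield the $16$-parameter total moduli and its $3$-parameter ad-invariant sub-moduli. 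The hardest part of the whole argument is this final step: the coupled system of $\sigma$ and $\alpha$ constraints has a large number of equations and matching the dimension counts requires careful bookkeeping of isotypic components, most cleanly checked with computer-algebra assistance.
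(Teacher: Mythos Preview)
Your proposal follows the same high-level strategy as the paper (parameterise via Lemma~\ref{inner-con}, reduce to linear algebra on $\Lambda^1$), but there is a real gap in how you constrain $\alpha$. The condition that $\alpha$ be a \emph{bimodule} map is equivariance under the diagonal right action $\xi\mapsto\xi\rho(h)$ on $\Lambda^1=\End(V)$ and on $\Lambda^1\tens\Lambda^1$; this is exactly what the paper encodes as $\phi.\alpha=\alpha.(\phi\tens\phi)$ with $\phi,\psi$ the explicit right-action matrices of $u,v$. Ad-invariance, by contrast, is equivariance under the \emph{conjugation} action $\xi\mapsto\rho(h)^{-1}\xi\rho(h)$. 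These are different: under right multiplication $\Lambda^1\cong V\oplus V$, while under conjugation $\Lambda^1\cong\mathrm{triv}\oplus\mathrm{sgn}\oplus V$, which is the decomposition you use. ``Bicovariance'' of the connection is vacuous here since $\Lambda^1=\Lambda^1_e$ makes $\Delta_R$ trivial on $\Lambda^1$, so your ``ad-invariance plus bicovariance'' imposes only conjugation-equivariance; that gives $\dim\Hom_{S_3,\Ad}(\Lambda^1,S^2\Lambda^1)=7$, not the paper's $6$ (bimodule plus torsion-free) nor the $1$ you need after cotorsion. The fix is to impose both, i.e.\ $G\times G$-equivariance, under which $\End(V)\cong V\boxtimes V$ is irreducible; then your Schur argument does produce the right counts.

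A second gap: you argue Levi-Civita uniqueness only within $\sigma={\rm flip}$, but the proposition asserts uniqueness among \emph{all} ad-invariant bimodule connections. The paper handles this by checking metric-compatibility across the full $3$-parameter (indeed $16$-parameter) torsion-free cotorsion-free moduli, a quadratic condition verified by computer algebra; your sketch does not address the $\tau\ne0$ branch. The paper's execution throughout is by direct $16\times16$ matrix computation rather than isotypic decomposition; once the bimodule/Ad distinction is corrected, your representation-theoretic route is a legitimate and cleaner alternative for the linear parts, though the quadratic Levi-Civita check will still need an explicit calculation.
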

\proof Let 
\[ g_{u,v}:=(e_{uv}-e_u)\tens(e_{uv}-e_u)+(e_{vu}-e_v)\tens(e_{vu}-e_v)+e_{uvu}\tens e_{uvu}\in \Lambda^1\tens\Lambda^1\]
It is clear that $g_{u,v}$ is central and quantum symmetric.  Then the element $g={1\over 3}(g_{u,v}+g_{v,w}+g_{w,u})$ is a quantum metric and comes out as stated and is ad-invariant. We then check it is nondegenerate. Indeed, its inverse  bimodule inner product on basis elements in the order $\{ e_u,e_v,e_{uv},e_{vu}  \}$ is
\[ (e_.,e_.)=
 \begin{pmatrix}{4\over 3}& {1\over 3}&1&1\cr {1\over 3}&{4\over 3}&1&1\cr 1&1&2&1\cr 1&1&1&2\end{pmatrix}
\]

We next define the matrices $\phi, \psi$ for right action of $u,v$ respectively (by multiplication by $\rho(u),\rho(v)$) which on our basis $\{e_i\}$ come out as
\[ \phi=\begin{pmatrix}-1&0&0&0\cr -1 & 0 & 0 & 1\cr -2 & -1& 1 & 1\cr -1 & 1 & 0&0\end{pmatrix},\quad 
\psi=\begin{pmatrix}0&-1&1&0\cr 0 & -1 & 0 & 0\cr 1 & -1 & 0 & 0\cr -1 & -2 & 1&1\end{pmatrix}\]
and we define $\tau$ by $\sigma(e_i\tens e_j)=e_j\tens e_i+ e_m\tens e_n \tau{}_{ij}{}^{mn}$. We solve for $[(\phi\tens\phi),\tau]=0, [(\psi\tens\psi),\tau]=0$ where $(\phi\tens\phi)_{ij}{}^{mn}=\phi_i{}^m\phi_j{}^n$ and $\tau$ are regarded as 16$\times$16 matrices for $ij, mn$ as multiindices. Similarly we require $\phi.\alpha=\alpha.(\phi\tens\phi)$  as matrices (and similary for $\psi$) where $\alpha$ is regarded as $4\times 16$. In each case we suppose $\alpha$ and $\tau$ are symmetric in their last two indices since we are interested in the torsion free case. This gives our moduli of torsion free connections as 28-dimensional and 6-dimensional respectively for $\tau$ and $\alpha$. The former are conveniently parametrized by the 15 values of $\tau_{ii}{}^{jj}$ other than $\tau_{33}{}^{11}$ (say) and a further 13 values. When we further demand the cotorsion equations that $\tau.\theta$ and $\alpha$ are totally symmetric in their three indices, these
get cut down to 12 and 4-dimensional spaces respectively. The 12-dimensional moduli space is conveniently parametrized by $\tau_{ii}{}^{jj}$ except for the four values $\tau_{33}{}^{11}, \tau_{44}{}^{11},\tau_{44}{}^{22},\tau_{44}{}^{33}$. It suffices for full metric compatibility to restrict to this torsion-free cotorsion free moduli space. Next, we similarly impose invariance under matrices $P,Q$ for the adjoint action of $w, uv$ respectively, i.e. we look for Ad-invariant solutions, giving a 2-parameter moduli for $\tau$ and 1-parameter for $\alpha$ of torsion free cotorsion free invariant connections. Setting $\tau=0$ focusses on the 1-dimensional moduli space stated and one can check that this is not fully metric compatible unless $\lambda=0$. One can check that there is no other fully metric compatible connection in the full 16 parameter moduli either. The quadratic conditions on $\tau$ here were checked using MATHEMATICA and apply over $\C$ for this reason. 

We now study this 1-dimensional parameter space further. In fact the parameter enters as a scaling of $\alpha$ which we separate off, so we write $\nabla e_i=\lambda\alpha(e_i)$ where $\alpha$ denotes the displayed right hand side of $\nabla$ corresponding to $\lambda=1$. Note that while the equations for $\alpha$ are linear in $\lambda$, the connection is not, for example $\nabla(ue_i)=\extd u\tens e_i + u\lambda\alpha(e_i)$. As an illustrative check let us verify that this indeed gives a bimodule connection. Thus for example
\begin{eqnarray*}&&\kern-15pt\nabla(e_u v)=\nabla( v (e_{uv}-e_v))=\extd v\tens (e_{uv}-e_v)+v\nabla(e_{uv}-e_v)\\
&&=\sigma(e_u\tens\extd v)+v(-e_{uv}\tens e_{uv}-2e_u\tens e_u+e_v\tens e_v-e_v\tens e_u-e_u\tens e_v-3e_u\tens\theta-3\theta\tens e_u)\end{eqnarray*}
and we want this to equal $\sigma(e_u\tens \extd v)+ (\nabla e_u).v$ for a bimodule connection. Here $\sigma={\rm flip}$ on the basis means $\sigma(e_u\tens \extd v)=-\sigma(e_u\tens e_v)v=-e_v\tens e_u v=ve_v\tens (e_{uv}-e_v)=\extd v\tens (e_{uv}-e_v)$ and meanwhile
\begin{eqnarray*} (\nabla e_u).v&=&v( -(e_{uv}-e_v)\tens(e_{uv}-e_v)+2e_v\tens e_v -e_v\tens(e_{uv}-e_v)-(e_{uv}-e_v)\tens e_v\\
&&-3e_v\tens\theta-3\theta\tens e_v-6e_v\tens e_v+(e_u-e_v)\tens (e_w-e_v)+(e_w-e_v)\tens (e_u-e_v))\end{eqnarray*}
using the form of $\nabla e_u$, the bimodule relations for the calculus and $\theta v=v\theta+v e_v$. Expanding out $-3\theta=e_{uv}+e_{vu}$ and 
$e_w=e_{uv}+e_{vu}-e_u-e_v$, this equates to  the previous expression from $\nabla(e_{uv}-e_v)$. One can similarly check this for $u$ and for the other 3 basis elements of $\Lambda^1$ as well as metric compatibility for the stated quantum metric. Finally, $\tau=0$ means that $\Delta=\Delta_\theta$ on functions. Since $\tau=0$ we can also write the latter as $\Delta_\theta=g_{ij}\del^i\del^j$ where $\extd a=\sum_i (\del_ia)e_i$ for all $a\in \C S_3$ and $g=\sum_{i,j}g_{ij}e_i\tens e_j$.
\endproof

One can canonically lift the 2-form values of $R_\nabla$ by lifting products of the $e_i$ antisymmetrically,  and then take a trace via the metric and inverse metric to obtain the {\rm Ricci} curvature. This comes out for the stated 1-parameter moduli of connections as 
\[ {\rm Ricci}=0.\]
So this noncommutative geometry is Ricci-flat but not flat when $\lambda\ne 0$. 

Moreover, the  `Laplacian'  $\Delta=\Delta_\theta$ is diagonal on the given basis of $\C S_3$ with $u,v,w$ having eigenvalue 4/3 and $uv,vu$ eigenvalue 2 and $e$ having eigenvalue 0, i.e. the eigenspaces are the spans of conjugacy classes. This is the reverse of the situation for the noncommutative geometry of $\C(S_3)$ where a choice of conjugacy class provides a natural 3D calculus and with respect to it  $\Delta_\theta$ is diagonalised by matrix elements of the irreducible representations. Here the trivial class gives the zero calculus $\{e\}$, $\{uv,vu\}$ gives a calculus which is not connected and $\{u,v,w\}$ gives the standard 3D calculus afforded by the Bruhat graph. In the latter case one again has a natural metric $g=e_u\tens e_u+e_v\tens e_v+e_w\tens e_w$ and (2-parameter) moduli of torsion free cotorsion free connections $\nabla$ in \cite{Ma:rieq} for which one can check that $\Delta=\Delta_\theta$ is diagonalised by the irreducibles of $S_3$  with the 2D representation matrix elements having eigenvalue  6, the sign representation having eigenvalue  12   and the trivial representation eigenvalue 0. There is a similar picture for the quantum geometry defined by the other nontrivial class $\{uv,vu\}$ where the eigenvalues are 0 on the trivial and sign representation and 8 on the 2D representation matrix elements (the general formula for the eigenvalues of $\Delta_\theta$ for a finite group is $2(1-\chi(C))|C|$ where $C$ is a conjugacy class and $\chi$ is the normalised character of an irreducible representation evaluated on an element of $C$, cf. \cite{Ma:gra}). This indicates a remarkable duality between the noncommutative geometry of $k G$ and $k(G)$  extending the classical duality between characters and conjugacy classes and illustrated above for $S_3$. We have focussed on standard calculi but expect a similar duality to hold at the quiver level as well.

\subsection{Quivers from finite homogeneous spaces}

A nice thing about quiver calculi on  sets (as opposed to standard ones given by digraphs) is that this category is closed under quotients by a group action. If $(X=Q_0, Q_1,s,t)$ is a quiver then an action of a group $G$ means an action 1on both $Q_0,Q_1$ with $s,t$ covariant, i.e. for every arrow $x\to y$, $(x\to y)^g$ is some arrow $x^g\to y^g$ where $(\ )^g$ denotes the action of $g\in G$.  Even if $X$ has a digraph calculus and $G$ acts freely, this in general still gives a quiver calculus on $X/G$  (we will give an example below). To fix this one can define the quotient graph on $X/G$ as one where there is an arrow between distinct orbits if and only if there exists any arrow between a representative of one to a representative of the other. 

The Hopf algebra version of this is that if $P$ is a right $H$-comodule algebra with coaction $\Delta_R$ and $\Omega^1(P)$ a right-covariant standard differential calculus and we set $A=P^H$ (the coinvariant subalgebra) then $\Omega^1(P)^H$ is a not-necessarily standard differential calculus on $A$. This is because $\extd: P\to \Omega^1(P)$ restricts to the invariant subalgebra and clearly inherits all required properties except surjectivity. To fix this one defines the standard calculus $\Omega^1(A)\subseteq \Omega^1(P)^H$ as the $A$-subimodule generated by $\extd A$ within $\Omega^1(P)$. This inherited calculus on $A=k(X/G)$ in the set case agrees with the quotient graph because  the forms on $A$ are by definition generated by characteristic functions $\delta_i$ on $X$ where $\delta_i$ is a constant $1$ on orbit $\mathcal{O}_i$ and zero elsewhere, and their differentials
\[ \extd \delta_i=\Big(\sum_{x\to y, x\notin \mathcal{O}_i,y\in \mathcal{O}_i}x\to y\Big)-\Big(\sum_{x\to y, x\in \mathcal{O}_i, y\notin \mathcal{O}_i} x\to y\Big)=\sum_{j\ne i}e_{j\to i}-e_{i\to j}\]
where we only include $e_{i\to j}=\sum_{x\to y, x\in \mathcal{O}_i, y\in \mathcal{O}_j} x\to y$ if some arrow from $\CO_i$ to $\CO_j$ exists in the digraph on $X$.

The above data is encountered naturally in the theory of quantum principal bundles in standard noncommutative geometry. We recall that a quantum principal bundle means such a right $H$-comodule algebra $P$ with differential structure such that\cite{BrzMa:gau}
\begin{equation}\label{seqP} 0\to P\Omega^1(A)P\hookrightarrow \Omega^1(P)\xrightarrow[]{\rm ver} P\tens\Lambda^1_H\to 0\end{equation}
is exact, where $\Omega^1(H)=H\tens\Lambda^1_H$ is bicovariant and the map ${\rm ver}(u\tens v)=u\Delta_Rv$ at the level of universal calculus $\Omega^1_{univ}P\subseteq P\tens P$, which we assume descends to a given right-covariant calculus $\Omega^1(P)$. Such data can be obtained first by solving at the level of the universal calculus on all our algebras, in which case exactness of the above is equivalent to the similarly defined map
\[ {\rm ver}:P\tens_A P\to P\tens H\]
being an isomorphism (i.e. a Hopf-Galois extension) and then requiring that our data descends correctly. A connection on the quantum principal bundle is an equivariant splitting of ${\rm ver}$, which amounts to a right comodule  map $\omega:\Lambda^1_H\to \Omega^1(P)$ such that ${\rm ver}\omega(v)=1\tens v$ for $v\in \Lambda^1_H$. Here $H$ coacts on $\Lambda^1_H$ by the right adjoint coaction when the latter is identified as a quotient of $H^+$. This gives an equivariant left $P$-module projection $\Pi_\omega$ on $\Omega^1(P)$ with kernel the horizontal forms $P\Omega^1(A)P$, splitting (\ref{seqP}). Taking the coinvariants of this at least in nice cases where $P\Omega^1(A)P=P\Omega^1(A)$ gives us $\Omega^1(P)^H=\Omega^1(A)\oplus E$ as left $A$ modules, where $E=(P\tens\Lambda^1_H)^H$ is an associated bundle according to the theory in \cite{BrzMa:gau} (classically it would be the sections of the coadjoint bundle with fibre the dual of the Lie algebra). 

Now in our case $P=k(X)$ where $X$ is a digraph and its arrows determine $\Omega^1(P)$. We take $H=k(G)$ for a finite group and $\Omega^1(H)$ given by the Cayley graph of an ad-stable subset $\bar C$, i.e. with vertices $G$ and arrows $x\to xa$ for $a\in \bar C\subseteq G\setminus\{e\}$. We work with a basis of $\Lambda^1_H$ given by $e_a=\sum_{g\in G}g\to ga$ for $a\in \bar C$. To meet our covariance condition we require that the digraph of $X$ is covariant under the action of $G$ as discussed above. 

\begin{proposition}\label{prinG} A finite group $G$ with Cayley graph given by $\bar C$ acting on a finite digraph $X$  gives a quantum principal bundle if and only if

(i) Each orbit has cardinality $|G|$  

(ii) The graph within each orbit in $X$ has valency $|\bar C|$ with arrows of the form $x\to x^a$, $a\in \bar C$. 

In this case there is a canonical connection $\omega(e_a)=\sum_x x\to x^a$ for all $a\in \bar C$ and if there is at most one arrow from any $x\in X$ to any given different orbit from the one containing $x$ then there is a splitting
\[ \Omega^1(X)^G\isom \Omega^1(X/G)\oplus E, \quad E=k(X/G,\Lambda^1_H)\]
as $A$-modules. The associated quiver then consists of the quotient digraph plus self-loops labelled by $\bar C$. 
\end{proposition}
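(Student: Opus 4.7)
First I would compute the verticalisation map $\ver:\Omega^1(P)\to P\tens\Lambda^1_H$ on the arrow basis. The coaction dual to the right $G$-action on $X$ is $\Delta_R(\delta_y)=\sum_{g\in G}\delta_{yg^{-1}}\tens\delta_g$, so using $\ver(u\tens v)=u\Delta_R v$ one finds
\[
\ver(x\to y)=\delta_x\,\Delta_R(\delta_y)=\sum_{\{g\,:\,y=xg\}}\delta_x\tens\overline{\delta_g}\ \in\ P\tens\Lambda^1_H.
\]
A between-orbit arrow has an empty index set and maps to $0$; a within-orbit arrow $x\to xg$ maps to $\delta_x\tens\overline{\delta_g}$, which is nonzero in $\Lambda^1_H$ precisely when $g\in\bar C$ and $\mathrm{Stab}_G(x)$ is trivial (so the sum collapses to a single basis vector). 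A parallel expansion of $\extd\delta_\CO=[\theta,\delta_\CO]$ shows that within-orbit contributions cancel in pairs, so $\extd\delta_\CO$ is a signed sum of between-orbit arrows; consequently $P\Omega^1(A)P$ is the $k$-span of all between-orbit arrows.

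For the equivalence, $(\Leftarrow)$ assume (i) and (ii). Freeness of the action (i) is the classical Hopf--Galois condition for $k(X)/k(X/G)$, so the universal $\ver$ is already an isomorphism $P\tens_A P\isom P\tens H$; condition (ii) ensures descent to the chosen calculi, since each basis vector $\delta_x\tens e_a$ is hit uniquely by the arrow $x\to x^a$ while between-orbit arrows remain in the kernel, which by the identification above equals $P\Omega^1(A)P$. $(\Rightarrow)$ Conversely, surjectivity of $\ver$ at the universal level forces $\mathrm{Stab}_G(x)=\{e\}$: otherwise the image at $\delta_x$ consists of sums over nontrivial cosets of $\mathrm{Stab}_G(x)$, spanning only a proper subspace of $\delta_x\tens H^+$ and missing individual $\delta_g$. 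This yields (i), and then surjectivity of the descended $\ver$ demands an arrow $x\to x^a$ for every $x\in X$ and $a\in\bar C$; exactness in the middle further forbids any within-orbit arrow $x\to xg$ with $g\notin\bar C$, since such an arrow would lie in $\ker(\ver)$ but outside the between-orbit span $P\Omega^1(A)P$. Together these give (ii).

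For the canonical connection I define $\omega(e_a)=\sum_{x\in X}(x\to x^a)$; by the formula above, $\ver(\omega(e_a))=\sum_x\delta_x\tens e_a=1\tens e_a$, so $\omega$ splits $\ver$. Right $H$-covariance follows from ad-stability of $\bar C$: the induced right $G$-action on arrows sends $(x\to x^a)$ to $(xg^{-1}\to x^ag^{-1})=(xg^{-1}\to (xg^{-1})^{gag^{-1}})$, giving $\omega(e_a)\cdot g=\omega(e_{gag^{-1}})=\omega(e_a\cdot g)$. The associated projection $\Pi_\omega$ splits $\Omega^1(P)$ $G$-equivariantly, and taking $G$-invariants yields
\[
\Omega^1(P)^G=(P\Omega^1(A)P)^G\oplus(P\tens\Lambda^1_H)^G,
\]
where the vertical piece is $(P\tens\Lambda^1_H)^G\isom k(X/G,\Lambda^1_H)=E$ by associated-bundle reasoning. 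The hypothesis that at most one arrow leaves $x$ toward any distinct orbit ensures that every pair of orbits of $X$ connected by an arrow admits exactly one $G$-orbit of between-orbit arrows; the $G$-invariant sum over this orbit is precisely a component of $\extd\delta_\CO$, so the horizontal invariants coincide with the standard quotient-digraph calculus $\Omega^1(X/G)$. This horizontal identification is the main obstacle: without the one-arrow hypothesis, multiple $G$-orbits of between-orbit arrows would contribute extra invariant forms not lying in $\Omega^1(X/G)$, breaking the claimed direct-sum decomposition. Interpreting the splitting as a quiver calculus on $X/G$ then gives the promised description: the between-orbit part furnishes the quotient digraph, while the fibre $\Lambda^1_H$ supplies $|\bar C|$ self-loops at each vertex, labelled by $\bar C$.
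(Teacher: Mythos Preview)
Your proof is correct and follows essentially the same line as the paper's: both compute $\ver$ explicitly on arrows, identify $P\Omega^1(A)P$ with the span of between-orbit arrows, deduce (i) from surjectivity at the universal level and (ii) from exactness of the descended sequence, and verify the canonical connection and its equivariance in the same way.

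The one genuine difference is in how you handle the splitting. The paper uses the one-arrow hypothesis to show the bimodule identity $P\Omega^1(A)P=P\Omega^1(A)$ (observing that $\delta_x e_{i\to j}=\sum_{y\in\CO_j}x\to y$ has a single term precisely under this hypothesis) and then invokes the general associated-bundle machinery to pass to invariants. You instead compute $(P\Omega^1(A)P)^G$ directly: since $G$ acts freely on arrows, the invariants are spanned by $G$-orbit sums of between-orbit arrows, and the one-arrow hypothesis forces exactly one such orbit per connected pair $(\CO_i,\CO_j)$, whose sum is $e_{i\to j}$. Your route is more self-contained and avoids the auxiliary condition $P\Omega^1(A)P=P\Omega^1(A)$; the paper's route is slightly stronger in that it establishes a $P$-bimodule identity rather than only the invariant subspace, which is what the general quantum-bundle framework in \cite{BrzMa:gau} expects as input. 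Both arguments are valid for the stated conclusion.
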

\proof For a Hopf Galois extension or quantum bundle at the universal level we want the action of $G$ to be free, which happens if and only if each orbit $\CO_i$ has cardinality $|G|$, which in turn means each orbit is bijective with $G$ on fixing a basepoint $x_i$. In this case we have a bijection $(X/G)\times G\to X$ by $i\times g\mapsto x_i^g$ and hence at least when $X$ finite that $P\isom A\tens H$ as an algebra. This means that $P$ has a trivialisation 
\[ \Phi(\delta_g)=\sum_{i\in X/G}\delta_{x_i^g},\quad \Phi^{-1}(\delta_g)=\sum_{i\in X/G}\delta_{x_i^{g^{-1}}}\]
which one can check obeys the required conditions \cite{BrzMa:gau}.  This also provides a flat connection from the trivialisation at least at the level of the universal calculus though this is not relevant at the moment. In our case we will see that we have a quotient of the universal bundle calculus if and only if the graph on $X$ restricted within each orbit has valency $|\CC|$ with arrows $x\to x^a$ for $a\in \CC$, i.e. if and only if each orbit is not only isomorphic to $G$ as a set but as a digraph obtained by restriction from $X$ (dropping any arrows that do not lie within the orbit). Indeed, 
\[ \ver(x\to y)=\delta_x\sum_{a\in \mathcal{C}} \delta_{y^{a^{-1}}}\tens e_a=\sum_{a\in\mathcal{C}, y=x^a}\delta_x \tens e_a\]
where we use the definition in terms the coaction $\Delta_R$ projected down to $\Lambda^1_H$. By freeness, there is only one possible $a$ that can contribute so $\ver(x\to x^a)=\delta_x\tens e_a$ and zero on arrows not of this form. Meanwhile, the horizontal forms $P\,\Omega^1_AP$ are spanned by all arrows from one orbit to another (since these are picked out by $\delta_xe_{i\to j}\delta_y$ as we vary $x,y\in X$). If we want this to be exactly the kernel of $\ver$ then we need all arrows $x\to y$ where $x,y$ are in the same orbit to be exactly those of the form $x\to x^a$ for some $a\in \CC$. This proves our assertion about the required structure of the digraph on $X$. For the canonical flat connection form we let
\[ \omega(e_a)=\sum_{x\in X} x\to x^a\]
and check that  $\ver\omega(e_a)=\sum_x \delta_x\tens e_a=1\tens e_a$, and \[  \Delta_R\omega(e_a)=\sum_{g,x} x^g\to x^{ag}\tens\delta_g=\sum_{x',g}x'\to x'{}^{g^{-1}ag}\tens\delta_g=\sum_g \omega(e_{g^{-1}ag})\tens\delta_g\]
as required. This connection provides an equivariant left $P$-module map splitting of (\ref{seqP}) via the projection
\[ \Pi_\omega(x\to y)=\cdot(\id\tens\omega)\ver(x\to y)=\sum_{a\in\mathcal{C}, y=x^a}x\to y\]
which is the identity if $x,y$ are in the same orbit and zero otherwise. Meanwhile $P\Omega^1(A)$ is spanned by elements of the form $\delta_xe_{i\to j}=\sum_{y\in\CO_j}x\to y$ where $x\in\CO_i$ so this coincides with $P\Omega^1(A)P$ if and only if each of these sums has only one term which is the condition stated.  In this case taking  $G$-invariants of the splitting provided by $\Pi_\omega$ gives the stated splitting with $E\isom k(X/G)\tens\Lambda^1_H$ since the bundle is trivial\cite{BrzMa:gau}.  Here $\Lambda^1_H$ has basis $\{e_a\}_{a\in\bar C}$ from which the quiver associated to $\Omega^1(X)^G$ is then clear.  \endproof

Note that the flat connection here need not be `strong' in the sense needed for the theory of associated bundles and likewise the condition $P\Omega^1(A)P=P\Omega^1(A)$ need not hold. For example, it typically does not hold for universal calculi on $X$ and $G$.  Finally, there is an important case of a quantum bundle namely a finite homogeneous space. Let $G\subseteq X$ be a nontrivial subgroup of a finite group $X$ and $\bar C,\bar C_X$ define respectively bicovariant and left-covariant calculi (so $\bar C_X$ is any subset of $X\setminus\{e\}$ while $\bar C$ is an ad-stable subset of $G\setminus\{e\}$). Then $X\to X/G$ by the above gives a quantum principal bundle if and only if 
\begin{equation} \label{XmodG} \bar C=\bar C_X\cap G.\end{equation}
Even in this case the further condition for the splitting need not hold and one can have quiver calculi $\Omega(X)^G$ with multiple arrows between vertices in $X/G$. We give an example where the condition does hold.

\begin{example} We take $X=S_3$ with its standard 3D calculus afforded by the Cayley graph of $\bar C_X=\{u,v,w\}$ and $G=\Z_2$ generated by $u$  with its universal calculus afforded by the complete graph on two elements (here $\bar C=\{u\}$) and acting by right translation (which we can view as a right coaction of $H=k(\Z_2)$ on $P=k(S_3)$). The set $X/G$ consist of the orbits $\CO_0=\{e,u\}=\Z_2, \CO_1=\{v,vu\}=v\Z_2, \CO_2=\{uv,w\}=w\Z_2$. The graph on $X$, which is the bidirected Bruhat graph, has 18 arrows and meets the conditions of Proposition~\ref{prinG} as we can see from (\ref{XmodG}). The quotient quiver corresponding to calculus $\Omega(X)^G$ therefore has half as many, i.e. 9 arrow and in accordance with Proposition~\ref{prinG} is given by
\[
\begin{tikzpicture}[descr/.style={fill=white},text height=1.5ex, text depth=0.25ex, arrow/.style={->}]
\node (1) at (0, 0) {$\circ$};
\node (2) at (2, 0) {$\circ$};
\node (3) at (1, 1.732) {$\circ$};
\path[-,font=\scriptsize,>=angle 90]
(1) edge node[]{} (2)
(2) edge node[]{} (3)
(3) edge node[]{} (1);
\draw [arrow] (1.west) ++(+0.25cm,5pt) arc [start angle=20, end angle=340, radius=0.5cm];
\draw [arrow] (2.east) ++(-0.25cm,5pt) arc [start angle=160, end angle=-160, radius=0.5cm];
\draw [arrow] (3.north) ++(5pt,-0.25cm) arc [start angle=-70, end angle=250, radius=0.5cm];
\end{tikzpicture}
\]
where the unmarked edges should be read as directed both ways. These form an equilateral triangle or complete digraph on 3 points (the universal calculus) as the standard calculus $\Omega(X/G)$ of the base. The condition for the splitting holds, for example $e\to v$ holds in $X$ and connects different orbits and the only other arrows from $e$ are $e\to u$ which lands in the same orbit as $e$ and $e\to w$ which lands in a different orbit from $v$.  \end{example}

Note that our context was quantum principal bundles with standard differentials and it was not our purpose here to study quantum principal bundles with general quiver calculi though this may certainly be done. 

\subsection{Quiver metrics} We now turn to elements of actual noncommutative geometry on quiver calculi. We start with a finite set $Q_0$ and let $Q=(Q_0,Q_1)$ be a coloured quiver. Define the index set of arrows in $Q$ by $E_Q:=\{x\to y\,|\, x\xrightarrow{(i)} y\in Q_1, \text{ for some } i\,\},$ and we call element $x\to y$ in $E_Q$ \textit{index arrow} though it is not a real arrow in $Q.$
Denote $E'_Q$ the subset of $E_Q$ consisting of all the arrows which have opposite arrow in $Q$, i.e. $E'_Q=\{x\to y\in E_Q\,|\,y\to x\in E_Q\}.$ We call a quiver is \textit{symmetric} if for arbitrary vertices $x$ and $y$ in $Q_0,$ the number of arrows from $x$ to $y$ equals the number of arrows from $y$ to $x.$ In this case, $E'_Q=E_Q.$

\begin{proposition} Let $Q=(Q_0,Q_1)$ be a colored quiver, the associated differential structure $\Omega^1=kQ_1$ on $A=k(Q_0)$ admits a central metric if and only if the quiver is \textit{symmetric}. The metric takes the form
\begin{equation}
g=\sum_{x\to y\in E_Q}\sum_{i,j=1}^{n(x,y)} g_{x\to y}^{ij} x\xrightarrow{(i)} y\xrightarrow{(j)} x,\quad (y\xrightarrow{(j)} x, x'\xrightarrow{(k)} y')=(g_{x\to y})^{-1}{}_{jk}\delta_{x,x'}\delta_{y,y'}\delta_y,
\end{equation}
where $g_{x\to y}=(g_{x\to y}^{ij})$ is an arbitrary $n(x,y)\times n(x,y)$ invertible matrices associated to index arrow $x\to y$ with $n(x,y)$ the number of arrows from $x$ to $y$ in $Q.$
\end{proposition}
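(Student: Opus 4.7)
The plan is to analyse centrality and the non-degeneracy conditions (\ref{centralmetric}) on $\Omega^1 \tens_A \Omega^1$ by careful bookkeeping of sources and targets, which reduces everything to block-diagonal matrix algebra indexed by ordered pairs of vertices.

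First, because $A=k(Q_0)$ is commutative with $\delta_z\cdot\alpha=\delta_{z,s(\alpha)}\alpha$ and $\alpha\cdot\delta_z=\delta_{z,t(\alpha)}\alpha$, the tensor product $\Omega^1\tens_A\Omega^1$ has a basis consisting of composable pairs of arrows (length-$2$ paths in $Q$). Centrality $\delta_z g=g\delta_z$ for all $z$ applied to a basis term $\alpha\tens_A\beta$ yields $\delta_{z,s(\alpha)}=\delta_{z,t(\beta)}$, forcing $s(\alpha)=t(\beta)$; combined with composability $t(\alpha)=s(\beta)$, every term of $g$ must be a length-$2$ loop $x\xrightarrow{(i)}y\xrightarrow{(j)}x$. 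The same bimodularity argument applied to a map $(\ ,\ )\colon\Omega^1\tens_A\Omega^1\to A$ shows $(\alpha,\beta)$ can be nonzero only on such loops and must take the form $(y\xrightarrow{(j)}x,\,x\xrightarrow{(k)}y)=H_{x,y}^{jk}\,\delta_y$ for scalars $H_{x,y}^{jk}$. So the general ansatz is
\[ g=\sum_{x,y}\,\sum_{i=1}^{n(x,y)}\sum_{j=1}^{n(y,x)} g_{x\to y}^{ij}\,(x\xrightarrow{(i)}y)\tens_A(y\xrightarrow{(j)}x), \]
where $g_{x\to y}$ is an $n(x,y)\times n(y,x)$ matrix, together with an analogous expression for $(\ ,\ )$ determined by the matrices $H_{x,y}$.

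Now evaluate the first identity of (\ref{centralmetric}) on a test form $\omega=y\xrightarrow{(j)}x$. The pairing $(\omega,g\bo)$ forces $g\bo=x\xrightarrow{(i)}y$, contributing $H_{x,y}^{j i}\,\delta_y$; multiplying by $g\bt=y\xrightarrow{(j')}x$ preserves that arrow, so the sum equals $\sum_{i,j'} H_{x,y}^{ji}\,g_{x\to y}^{ij'}\,(y\xrightarrow{(j')}x)$. Requiring this to equal $\omega$ for every $j$ is the matrix identity $H_{x,y}\,g_{x\to y}=I$; the second identity of (\ref{centralmetric}) gives symmetrically $g_{x\to y}\,H_{x,y}=I$. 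Hence both matrices must be square and mutually inverse: squareness forces $n(x,y)=n(y,x)$ for every pair with either count positive, which is exactly the symmetry condition on $Q$, and invertibility pins down $H_{x,y}=(g_{x\to y})^{-1}$, matching the claimed formula for the inner product. Conversely, when $Q$ is symmetric (so $E_Q=E_Q'$), any choice of invertible matrices $g_{x\to y}$ indexed by $E_Q$ makes the displayed formulas well-defined, and the two identities of (\ref{centralmetric}) reduce to the block-wise invertibility relations which hold by construction. The only nontrivial technical ingredient is the source/target bookkeeping in the bimodule $\Omega^1\tens_A\Omega^1$; once the support of both $g$ and $(\ ,\ )$ has been reduced to length-$2$ loops the problem is diagonal in $(x,y)$-blocks and the remaining algebra is elementary.
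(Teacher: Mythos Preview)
Your argument is correct and follows essentially the same route as the paper: reduce $g$ and $(\ ,\ )$ to block data indexed by ordered pairs $(x,y)$ via centrality/bimodularity, then read off the two conditions in (\ref{centralmetric}) as the matrix identities $H_{x,y}g_{x\to y}=I$ and $g_{x\to y}H_{x,y}=I$, forcing $n(x,y)=n(y,x)$ and $H_{x,y}=(g_{x\to y})^{-1}$. The only cosmetic slip is that what you call the ``first identity'' is actually the second one in (\ref{centralmetric}); this does not affect the mathematics.
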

\proof
For the quiver calculus $\Omega^1=kQ_1$ on $A=k(Q_0)$, we know $\Omega^1\tens_A\Omega^1=kQ_2$ is spanned by elements of form $x\xrightarrow{(i)} y\xrightarrow{(j)} z.$ A central metric is an element in $kQ_2$ commuting with functions. Therefore the general form of metric is
\[g=\sum_{x\to y\in E'_Q}\sum_{i=1}^{n(x,y)}\sum_{j=1}^{n(y,x)} g_{x\to y}^{ij}x\xrightarrow{(i)} y\xrightarrow{(j)} x,\quad g_{x\to y}^{ij}\in k,\]
where $n(x,y)=\dim k({}^xQ_1{}^y).$
Thus for each index arrow $x\to y\in E'_Q,$ we assign an $n(x,y)\times n(y,x)$ matrix  $g_{x\to y}=(g_{x\to y}^{ij}).$ 

Let $(\ ,\ ):\Omega^1\tens_A\Omega^1\to A$ be a bimodule map. Then $(\ ,\ )$ must be in form
\[(y\xrightarrow{(j)} x, x'\xrightarrow{(k)} y')=\lambda_{y\to x}^{jk}\delta_{x,x'}\delta_{y,y'}\delta_y,\quad \lambda_{y\to x}^{jk}\in k,\]
where $\lambda_{y\to x}=(\lambda_{y\to x}^{jk})$ is $n(y,x)\times n(x,y)$ matrix.

For each index arrow $x\to y$ in $E'_Q,$  (\ref{centralmetric}) forces
\[g_{x\to y}\lambda_{y\to x}=I_{n(x,y)},\quad \lambda_{y\to x} g_{x\to y}=I_{n(y,x)}.\] This shows that $n(x,y)=n(y,x),$ and matrices $g_{x\to y},$ $\lambda_{y\to x}$ are invertible and inverse to each other. However, (\ref{centralmetric}) is not obeyed for index arrow who does not have an opposite arrow, therefore $E'_Q=E_Q$ and the quiver must be symmetric.
\endproof

We now compute the Riemannian geometry of $4$D inner generalised differential calculus of $A=k(\Z_2)$ associated to the following quiver.
\[\begin{tikzpicture}[descr/.style={fill=white},text height=1.5ex, text depth=0.25ex]
\node (a) at (0,0) {${\circ\atop e}$};
\node (b) at (2.5,0) {${\circ\atop g}$};
\path[->,font=\scriptsize,>=angle 90]
([yshift= 18pt]a.east) edge node[above] {$\alpha_1$} ([yshift= 18pt]b.west)
([yshift= 10pt]a.east) edge node[descr] {$\alpha_2$} ([yshift= 10pt]b.west)
([yshift= -2pt]b.west) edge node[descr] {$\beta_1$} ([yshift= -2pt]a.east)
([yshift=-10pt]b.west) edge node[below] {$\beta_2$} ([yshift=-10pt]a.east);
\end{tikzpicture}\] 
Recall that $\Omega^1=kQ_1$ is spanned by all the arrows ${\alpha_1,\alpha_2,\beta_1,\beta_2}$ with $A$-bimodule structure and derivative given by
\begin{gather*}
\delta_e\alpha_i=\alpha_i=\alpha_i\delta_g,\quad\delta_g\alpha_i=0=\alpha_i\delta_e,
\quad\delta_g\beta_i=\beta_i=\beta_i\delta_e,\quad\delta_e\beta_i=0=\beta_i\delta_g;\\
\extd \delta_e=[\theta,\delta_e]=\beta_1-\alpha_1,\quad \extd \delta_g=[\theta,\delta_g]=\alpha_1-\beta_1,
\end{gather*}
where inner data $\theta=\alpha_1+\beta_1.$ Here $1$-forms are generated by $e^{(i)}=\alpha_i+\beta_i,\,i=1,2$ with relations
\[e^{(i)}f=(R_gf) e^{(i)},\quad i=1,2,\quad \extd f=(\partial f) e^{(1)},\] 
where $\partial=R_g-\id$ and $R_g$ is the right translation $R_g f=f(g)\delta_e+f(e)\delta_g$. The canonical `Woronowicz' differential graded algebra is the usual Grassmann algebra on $e^{(i)}.$ In particular, $\Omega^2=\mathrm{span}\{\alpha_1\beta_2=-\alpha_2\beta_1,\beta_1\alpha_2=-\beta_2\alpha_1\}$ with wedge product and differentials
\begin{gather*}
\alpha_i\wedge\beta_j=\alpha_i\beta_j=-\alpha_j\wedge\beta_i,\quad \beta_i\wedge\alpha_j=\beta_i\alpha_j=-\beta_j\wedge\alpha_i\\
\alpha_i\wedge\beta_i=0=\beta_i\wedge\alpha_i,\quad i,j=1,2,\ i\neq j.\\
\extd \alpha_1=0=\extd \beta_1,\quad\extd\alpha_2=\beta_1\alpha_2-\alpha_1\beta_2=-\extd\beta_2.
\end{gather*}

The general form of a central metric on the above quiver is clearly
\[g=\sum_{i,j=1}^2\lambda_{ij} e^{(i)}\tens e^{(j)}\ \in\Omega^1\tens_A\Omega^1=kQ_2,\]
where $\lambda_{ij}=\lambda_{ij}(e)\delta_e+\lambda_{ij}(g)\delta_g$ and  $(\lambda_{ij}(e)),\,(\lambda_{ij}(g))$ are arbitrary invertible $2\times 2$-matrices. Note that the values being invertible is equivalent to $\lambda=(\lambda_{ij})$ is invertible as an element in $M_2(A),$ as  $\lambda^{-1}=(\lambda_{ij}(e))^{-1}\delta_e+(\lambda_{ij}(g))^{-1}\delta_g$.

We now let $\sigma(e^{(i)}\tens e^{(j)})=\sum_{k,l=1}^2\sigma^{ij}_{kl}e^{(k)}\tens e^{(l)}$ be a general bimodule map with coefficients $\sigma^{ij}_{kl}$ functions on $\Z_2$ and study the associated bimodule connection $\nabla$ defined by $(\sigma,\alpha=0)$ in Lemma~\ref{inner-con} 1). We do not in general demand that $\sigma$ is invertible but this would be reasonable to also ask for. Light computation shows that  $\nabla$ is torsion-compatible (and thus torsion-free by 4) of Lemma~\ref{inner-con}  as $\alpha=0$) if and only if the coefficients $\sigma^{12}_{12}=\sigma^{12}_{21}-1$ and $\sigma^{21}_{12}=\sigma^{21}_{21}+1,$ therefore we can assume that the coefficients of $\sigma$ take the form
\begin{equation}\label{abc}
\begin{split}
(\sigma^{ij}_{11})=(a^{ij})=(\mathbf{a}_1,\mathbf{a}_2),\quad (\sigma^{ij}_{12})=(b^{ij})-\begin{pmatrix}0 & 1\\ -1 & 0 \end{pmatrix},\\
(\sigma^{ij}_{21})=(b^{ij})=(\mathbf{b}_1,\mathbf{b}_2),\quad (\sigma^{ij}_{22})=(c^{ij})=(\mathbf{c}_1,\mathbf{c}_2),
\end{split}
\end{equation}
where $\mathbf{a}_1,\mathbf{a}_2,\mathbf{b}_1,\mathbf{b}_2,\mathbf{c}_1,\mathbf{c}_2$ are six $2$-vectors with entries in $A.$ Now compute $\nabla g=e^{(1)}\tens_A g-\sigma_{12}\sigma_{23}(g\tens_A e^{(1)}).$ A lengthy computation shows that:

\begin{lemma} $\nabla g=0,$ i.e. the associated bimodule connection $\nabla$ is also metric-compatible if and only if
\begin{gather}
\lambda\cdot \mathbf{b}_1={\lambda_{21}(a^{11}-1)+\lambda_{22}a^{21}-\partial \lambda_{21}\choose
\lambda_{11}c^{11}+\lambda_{12}c^{21}+\partial\lambda_{22}},\label{bacb}\\
M\cdot\mathbf{a}_2={R_g\lambda_{11}\choose R_g\lambda_{12}}-N\cdot\mathbf{a}_1,\quad
M(\mathbf{b}_2,\mathbf{c}_2)=-N(\mathbf{b}_1,\mathbf{c}_1),\label{a2b2c2-1}
\end{gather}
where $M=(\lambda(\mathbf{b}_1,\mathbf{c}_1))^{\mathrm{Tr}}$ and $N
=\left(\lambda(\mathbf{a}_1-{1\choose 0},\mathbf{b}_1)\right)^{\mathrm{Tr}}+\lambda^{\mathrm{Tr}}.$
\end{lemma}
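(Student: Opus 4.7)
The plan is to apply Lemma~\ref{inner-con} part 2) directly. With $\alpha = 0$ and $\theta = e^{(1)}$ the metric-compatibility condition reads
\[\nabla g = e^{(1)} \tens_A g - \sigma_{12}\sigma_{23}(g \tens_A e^{(1)}) = 0.\]
Using the commutation $e^{(i)}f = (R_g f)e^{(i)}$ together with $R_g^2 = \id$, one checks that the left and right $A$-actions coincide on $\Omega^1 \tens_A \Omega^1 \tens_A \Omega^1$ and that $\{e^{(i)} \tens e^{(j)} \tens e^{(k)}\}_{i,j,k=1,2}$ is a free left $A$-basis. Writing $\sigma(e^{(i)} \tens e^{(j)}) = \sum_{k,l}\sigma^{ij}_{kl}\, e^{(k)} \tens e^{(l)}$, the first summand expands to $\sum_{i,j}(R_g\lambda_{ij})\, e^{(1)} \tens e^{(i)} \tens e^{(j)}$, while the second, after applying $\sigma_{23}$ and then $\sigma_{12}$ and pushing each $R_g$-twist through the coefficients, becomes
\[\sum_{i,j,k,l,p,q} \lambda_{ij}(R_g\sigma^{j1}_{kl})\sigma^{ik}_{pq}\, e^{(p)} \tens e^{(q)} \tens e^{(l)}.\]

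Equating coefficients of each basis vector $e^{(p)} \tens e^{(q)} \tens e^{(l)}$ yields eight scalar equations in $A$, one pair for each $(q,l) \in \{1,2\}^2$. I would then substitute the torsion-free parametrisation (\ref{abc}), noting that the $\pm 1$ perturbations of $\sigma$ sit in the $(k,l) = (1,2)$ block, and reorganise the resulting system. Some equations involve only the column $\mathbf{b}_1 = (b^{11}, b^{21})^{\mathrm{Tr}}$ on the left; collecting them and solving for $\mathbf{b}_1$ yields (\ref{bacb}) directly, with the $\partial\lambda_{21}$ and $\partial\lambda_{22}$ corrections on the right-hand side arising from the $R_g$-twists acting on the perturbed entries $\sigma^{12}_{12}$ and $\sigma^{21}_{12}$ versus their untwisted counterparts.

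The remaining equations partition into two further blocks which, after left-multiplication by the invertible matrix $\lambda^{\mathrm{Tr}}$, package into the single vector equation $M\mathbf{a}_2 = {R_g\lambda_{11}\choose R_g\lambda_{12}} - N\mathbf{a}_1$ and the $2 \times 2$ matrix equation $M(\mathbf{b}_2,\mathbf{c}_2) = -N(\mathbf{b}_1,\mathbf{c}_1)$. Here the matrices $M = (\lambda(\mathbf{b}_1,\mathbf{c}_1))^{\mathrm{Tr}}$ and $N = (\lambda(\mathbf{a}_1 - {1\choose 0},\mathbf{b}_1))^{\mathrm{Tr}} + \lambda^{\mathrm{Tr}}$ emerge naturally as coefficient matrices: the $+\lambda^{\mathrm{Tr}}$ term in $N$ encodes the off-diagonal $\pm 1$ perturbations in (\ref{abc}), while the shift $\mathbf{a}_1 - {1\choose 0}$ records the $-1$ correction to $a^{11}$ already visible in (\ref{bacb}). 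Invertibility of $\lambda$ in $M_2(A)$ makes these reorganisations reversible, so the three displayed equations are together equivalent to $\nabla g = 0$.

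The main obstacle is the bookkeeping: tracking the $R_g$-twists through $\sigma_{23}$ followed by $\sigma_{12}$, keeping the domain and codomain indices of $\sigma^{ij}_{kl}$ straight, and correctly assigning the $\pm 1$ perturbations from (\ref{abc}) to their final locations in the reorganised system. The non-obvious recognition step is identifying $M$ and $N$ as the natural coefficient matrices after clearing the common left factor $\lambda^{\mathrm{Tr}}$, rather than leaving the equations in raw component form.
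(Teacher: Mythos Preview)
Your proposal is correct and follows exactly the approach the paper indicates: the paper's own proof is simply the sentence ``A lengthy computation shows that:'' preceding the lemma, so you have in fact supplied more detail than the paper does. Your expansion of $e^{(1)}\tens_A g$ and $\sigma_{12}\sigma_{23}(g\tens_A e^{(1)})$ with the $R_g$-twists is accurate, and the reorganisation into the matrix form via $M$ and $N$ is the intended packaging.
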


Now denote $\lambda (\mathbf{b}_1,\mathbf{c}_1)=\begin{pmatrix} x & y_1\\ y_2 & z \end{pmatrix}$ by some functions $x,y_1,y_2,z.$ Then (\ref{bacb}) implies 
$y_2=y_1+\partial\lambda_{22}$ and 
$ (\lambda_{21},\lambda_{22})\cdot \left(\mathbf{a}_1-{1\choose 0}\right)=x+\partial\lambda_{21}.$ If we introduce another function $w=(\lambda_{11},\lambda_{12})\cdot \left(\mathbf{a}_1-{1\choose 0}\right),$ we actually obtain a $4$-functional parameter moduli space for $\mathbf{a}_1,\mathbf{b}_1,\mathbf{c}_1:$
\begin{equation}\label{a1b1c1}
\mathbf{a}_1=\lambda^{-1}{w\choose x+\partial\lambda_{21}}+{1\choose 0},\quad (\mathbf{b}_1,\mathbf{c}_1)=\lambda^{-1}\begin{pmatrix}x & y\\ y+\partial\lambda_{22} & z \end{pmatrix},
\end{equation} 
where $x,y,z,w$ are any functions on $\Z_2$ and provided we can solve for the remaining $\bf a_2,b_2,c_2$ as follows.

\begin{proposition}  If $M$ (or $(\mathbf{b}_1,\mathbf{c}_1)$) is invertible in $M_2(A),$ i.e. $xz-y(y+\partial\lambda_{22})\neq 0,$
then  (\ref{a1b1c1}) and \begin{equation}\label{a2b2c2}
\mathbf{a}_2=M^{-1}{R_g\lambda_{11}\choose R_g\lambda_{12}}-M^{-1}N\cdot\mathbf{a}_1,\quad
(\mathbf{b}_2,\mathbf{c}_2)=-M^{-1}N(\mathbf{b}_1,\mathbf{c}_1) 
\end{equation}
with $M=\begin{pmatrix}x, & y+\partial\lambda_{22} \\ y, & z \end{pmatrix}$ and $N=\begin{pmatrix}w, & x+\partial\lambda_{21} \\ x, & y+\partial\lambda_{22} \end{pmatrix}+\lambda^{\mathrm{Tr}},$ solve (\ref{bacb}) and (\ref{a2b2c2-1}) and thus
provide a 4-functional parameter moduli space of quantum Levi-Civita bimodule connections (i.e. torsion-free, torsion-compatible and metric-compatible). 
\end{proposition}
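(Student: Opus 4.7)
The plan is to verify directly that the parametrisation given by $(x,y,z,w)$ produces six vectors $\mathbf{a}_1,\mathbf{a}_2,\mathbf{b}_1,\mathbf{b}_2,\mathbf{c}_1,\mathbf{c}_2$ obeying all of (\ref{bacb}) and (\ref{a2b2c2-1}), and that the four functions on $\Z_2$ can be chosen freely subject only to the open condition that $M$ be invertible in $M_2(A)$. Since the torsion-free/torsion-compatible conditions have already been absorbed in the form (\ref{abc}) of $\sigma$ and we are setting $\alpha=0$, by Lemma~\ref{inner-con} parts 2) and 4) it suffices to check metric compatibility, which is exactly the content of the preceding lemma.

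First I would show that (\ref{a1b1c1}) exhausts the solution space of (\ref{bacb}). Equation (\ref{bacb}) really splits into two scalar conditions once we introduce the four auxiliary functions. Namely, writing $\lambda(\mathbf{b}_1,\mathbf{c}_1)=\bigl(\begin{smallmatrix}x & y_1\\ y_2 & z\end{smallmatrix}\bigr)$ with $x,y_1,y_2,z\in A$ (which is a bijective change of variables for $(\mathbf{b}_1,\mathbf{c}_1)$ because $\lambda$ is invertible in $M_2(A)$), the second row of (\ref{bacb}) reads $y_2=y_1+\partial\lambda_{22}$, so we may rename $y:=y_1$ and get a single free function. The first row of (\ref{bacb}) then determines the second component of $\mathbf{a}_1-\binom{1}{0}$ as $\lambda^{-1}$-image of $(w,x+\partial\lambda_{21})^{\mathrm{Tr}}$, where $w:=(\lambda_{11},\lambda_{12})\cdot (\mathbf{a}_1-\binom{1}{0})$ is the remaining free function. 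Invertibility of $\lambda$ shows that $(x,y,z,w)$ parametrise all solutions with no constraint, giving the formulas (\ref{a1b1c1}).

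Next I would tackle (\ref{a2b2c2-1}). Under the change of variables above, $M=(\lambda(\mathbf{b}_1,\mathbf{c}_1))^{\mathrm{Tr}}$ becomes the explicit matrix written in the statement, and $N$ takes the displayed form. Assuming $M$ is invertible in $M_2(A)$ — equivalently $\det M=xz-y(y+\partial\lambda_{22})$ is nowhere zero, i.e.\ invertible as an element of $A=k(\Z_2)$ — equations (\ref{a2b2c2-1}) become linear systems in $\mathbf{a}_2$ and in the pair $(\mathbf{b}_2,\mathbf{c}_2)$, and they are uniquely solved by (\ref{a2b2c2}). Substituting back, all of (\ref{bacb}) and (\ref{a2b2c2-1}) are satisfied, hence $\nabla g=0$ by the lemma.

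Finally the parameter count: $x,y,z,w$ are arbitrary functions on $\Z_2$ subject only to the open condition $xz-y(y+\partial\lambda_{22})\in A^\times$, and each of $\mathbf{a}_1,\mathbf{b}_1,\mathbf{c}_1$ (and hence $\mathbf{a}_2,\mathbf{b}_2,\mathbf{c}_2$) is determined by them, so we obtain precisely a four-functional-parameter moduli space of torsion-free, torsion-compatible, metric-compatible bimodule connections, i.e.\ of Levi-Civita connections. The only real work is the bookkeeping in solving (\ref{bacb}) for $\mathbf{a}_1,\mathbf{b}_1,\mathbf{c}_1$ in an invariant way; the remaining step for $\mathbf{a}_2,\mathbf{b}_2,\mathbf{c}_2$ is immediate once $M^{-1}$ exists. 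The main obstacle I anticipate is purely notational, namely keeping the $2$-vector and $M_2(A)$-matrix indices consistent with the component decomposition $f=f(e)\delta_e+f(g)\delta_g$ on $\Z_2$, but no genuinely new conceptual input is required beyond Lemma~\ref{inner-con}.
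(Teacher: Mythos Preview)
Your proposal is correct and follows essentially the same approach as the paper: the discussion between the lemma and the proposition already derives (\ref{a1b1c1}) from (\ref{bacb}) via the same change of variables $(x,y_1,y_2,z)\mapsto(x,y,z,w)$ you describe, and the proposition itself is stated without further proof precisely because, once $M$ is invertible, (\ref{a2b2c2-1}) is a linear system solved immediately by left-multiplying by $M^{-1}$. Your write-up is in fact slightly more explicit than the paper's about the bijection between $(\mathbf{a}_1,\mathbf{b}_1,\mathbf{c}_1)$ and $(x,y,z,w)$, but there is no difference in method.
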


Therefore, in view of Lemma  \ref{innernabla}, the bimodule connections $(\nabla,\sigma)$ corresponding to $\sigma$ above with $\alpha=0$ takes the form
\begin{align*}
\nabla e^{(1)}=&-\left(\lambda^{-1}_{11}w+\lambda^{-1}_{12}(x+\partial\lambda_{21})\right)e^{(1)}\tens e^{(1)}
-\left(\lambda^{-1}_{11}x+\lambda^{-1}_{12}(y+\partial\lambda_{22})\right)(e^{(1)}\tens e^{(2)}+e^{(2)}\tens e^{(1)})\\
{}&-\left(\lambda^{-1}_{11}y+\lambda^{-1}_{12}z\right)e^{(2)}\tens e^{(2)}\\
\nabla e^{(2)}=&-\left(\lambda^{-1}_{21}w+\lambda^{-1}_{22}(x+\partial\lambda_{21})\right)e^{(1)}\tens e^{(1)}
-\left(\lambda^{-1}_{21}x+\lambda^{-1}_{22}(y+\partial\lambda_{22})\right)(e^{(1)}\tens e^{(2)}+e^{(2)}\tens e^{(1)})\\
{}&-\left(\lambda^{-1}_{21}y+\lambda^{-1}_{22}z\right)e^{(2)}\tens e^{(2)},
\end{align*}
with $\sigma(e^{(1)}\tens e^{(1)})=e^{(1)}\tens e^{(1)}-\nabla e^{(1)}$ and $\sigma(e^{(2)}\tens e^{(1)})=e^{(1)}\tens e^{(1)}-\nabla e^{(2)}.$ 
We omit the tedious formulae for $\sigma(e^{(1)}\tens e^{(2)})$ and $\sigma(e^{(2)}\tens e^{(2)}),$ but one can always compute them from (\ref{a2b2c2}).

To have a concrete example, for arbitrary central metric $g$, one can always choose $x=z=1$ and $y=w=0.$ In this canonical case, 
\begin{align*}
\nabla e^{(1)}=&-(\partial\lambda_{21}+1)\lambda^{-1}{}_{12}e^{(1)}\tens e^{(1)}-(\lambda^{-1}{}_{11}+\partial\lambda_{22}\lambda^{-1}{}_{12})(e^{(1)}\tens e^{(2)}+e^{(2)}\tens e^{(1)})\\
&-\lambda^{-1}{}_{12} e^{(2)}\tens e^{(2)},\\
\nabla e^{(2)}=&-(\partial\lambda_{21}+1)\lambda^{-1}{}_{22}e^{(1)}\tens e^{(1)}-(\lambda^{-1}{}_{21}+\partial\lambda_{22}\lambda^{-1}{}_{22})(e^{(1)}\tens e^{(2)}+e^{(2)}\tens e^{(1)})\\
&-\lambda^{-1}{}_{22} e^{(2)}\tens e^{(2)}.
\end{align*}
When the given central metric is symmetric ($\lambda_{12}=\lambda_{21}$), we obtain $\sigma$ with coefficients
\begin{align*}
\sigma^{ij}_{11}&=\begin{pmatrix}(\partial\lambda_{21}+1)\lambda^{-1}{}_{12} +1, & {-\lambda^{-1}{}_{22}+\partial\lambda_{11}+\partial\lambda_{22}(\lambda^{-1}{}_{12}+\partial\lambda_{21}\lambda^{-1}{}_{12}+2)+(\partial\lambda_{22})^2(\partial\lambda_{21}\lambda^{-1}{}_{22}+\lambda^{-1}{}_{22})\atop
-2\partial\lambda_{21}\lambda^{-1}{}_{22}-(\partial\lambda_{21})^2\lambda^{-1}{}_{22},}\\ (\partial\lambda_{21}+1)\lambda^{-1}{}_{22}, & -\lambda^{-1}{}_{12}-2-\partial\lambda_{21}\lambda^{-1}{}_{12}-\partial\lambda_{22}\partial\lambda_{21}\lambda^{-1}{}_{22}-\partial\lambda_{22}\lambda^{-1}{}_{22} \end{pmatrix},\\ \sigma^{ij}_{12}&=\begin{pmatrix}\lambda^{-1}{}_{11}+\partial\lambda_{22}\lambda^{-1}{}_{12}, & {-\lambda^{-1}{}_{21}-2+\partial\lambda_{22}(\lambda^{-1}{}_{11}-\lambda^{-1}{}_{22}-\partial\lambda_{21}\lambda^{-1}{}_{22})-\partial\lambda_{21}\lambda^{-1}{}_{21}\atop (\partial\lambda_{22})^2(\lambda^{-1}{}_{12}+\lambda^{-1}{}_{21}+1)+(\partial\lambda_{22})^3\lambda^{-1}{}_{22},}\\ \lambda^{-1}{}_{21}+1+\partial\lambda_{22}\lambda^{-1}{}_{22}, & -\lambda^{-1}{}_{11}-\partial\lambda_{22}(1+\lambda^{-1}{}_{12}+\lambda^{-1}{}_{21})-(\partial\lambda_{22})^2\lambda^{-1}{}_{22} \end{pmatrix},\\
\sigma^{ij}_{21}&=\begin{pmatrix}\lambda^{-1}{}_{11}+\partial\lambda_{22}\lambda^{-1}{}_{12}, & {-\lambda^{-1}{}_{21}-1+\partial\lambda_{22}(\lambda^{-1}{}_{11}-\lambda^{-1}{}_{22}-\partial\lambda_{21}\lambda^{-1}{}_{22})-\partial\lambda_{21}\lambda^{-1}{}_{21}\atop (\partial\lambda_{22})^2(\lambda^{-1}{}_{12}+\lambda^{-1}{}_{21}+1)+(\partial\lambda_{22})^3\lambda^{-1}{}_{22},}\\ \lambda^{-1}{}_{21}+\partial\lambda_{22}\lambda^{-1}{}_{22}, & -\lambda^{-1}{}_{11}-\partial\lambda_{22}(1+\lambda^{-1}{}_{12}+\lambda^{-1}{}_{21})-(\partial\lambda_{22})^2\lambda^{-1}{}_{22} \end{pmatrix},\\
\sigma^{ij}_{22}&=\begin{pmatrix}\lambda^{-1}{}_{12}, & -\lambda^{-1}{}_{22}+\partial\lambda_{22}(1+\lambda^{-1}{}_{12})+(\partial\lambda_{22})^2\lambda^{-1}{}_{22}-\partial\lambda_{21}\lambda^{-1}{}_{22},\\ \lambda^{-1}{}_{22}, & -\lambda^{-1}{}_{12}-1-\partial\lambda_{22}\lambda^{-1}{}_{22} \end{pmatrix}.
\end{align*}

\begin{remark}  If $M$ (or $(\mathbf{b}_1,\mathbf{c}_1)$) is not invertible, i.e. $xz=y(y+\partial\lambda_{22}),$ we have other choices of bimodule connection. For instance, we can take $x=y=z=0.$ In this case
\begin{equation*}
\mathbf{a}_1={\lambda^{-1}{}_{11}w+\lambda^{-1}{}_{12}\partial\lambda_{21}+1\choose \lambda^{-1}{}_{21}w+\lambda^{-1}{}_{22}\partial\lambda_{21}},\quad
\mathbf{b}_1={\lambda^{-1}{}_{12}\partial\lambda_{22}\choose \lambda^{-1}{}_{22}\partial\lambda_{22}},\quad \mathbf{c}_1={0\choose 0},
\end{equation*}
and the condition (\ref{a2b2c2-1}) for being Levi-Civita requires
\begin{align}\label{a2b2c200w}
\partial\lambda_{22}a^{22}&=\partial\lambda_{11}+\lambda_{11}(\lambda^{-1}{}_{21}-\lambda^{-1}{}_{12})\partial\lambda_{21}-\lambda^{-1}{}_{22}\partial\lambda_{21}{}^2-(\lambda^{-1}{}_{12}+\lambda^{-1}{}_{21})\partial\lambda_{21}w\nonumber\\
&{}\quad-(1+\lambda^{-1}{}_{21}\lambda_{21}+\lambda^{-1}{}_{22}\lambda_{22})w-\lambda^{-1}{}_{11}w^2,\nonumber\\
0&=\partial\lambda_{21}-(\lambda^{-1}{}_{12}\lambda_{12}+\lambda^{-1}{}_{22}\lambda_{22})\partial\lambda_{21}-\lambda^{-1}{}_{22}\partial\lambda_{21}\partial\lambda_{22}-\lambda^{-1}{}_{21}\partial\lambda_{22}w\nonumber\\
&{}\quad+(\lambda^{-1}{}_{12}-\lambda^{-1}{}_{21})\lambda_{22}w,\\
\partial\lambda_{22}b^{22}&=\lambda_{11}(\lambda^{-1}{}_{21}-\lambda^{-1}{}_{12})\partial\lambda_{22}-\lambda^{-1}{}_{22}\partial\lambda_{21}\partial\lambda_{22}-\lambda^{-1}{}_{12}\partial\lambda_{22}w,\nonumber\\
0&=-(\lambda^{-1}{}_{12}\lambda_{12}+\lambda^{-1}{}_{22}\lambda_{22})-\lambda^{-1}{}_{22}\partial\lambda_{22}{}^2,\nonumber\\
\partial\lambda_{22}c^{22}&=0.\nonumber
\end{align}
We will come to solve these equations in the left-covariant case later. \end{remark}

\subsection{Left-covariant connection}
When working over a Hopf algebra, we are interested in left-covariant differential calculi as well as bicovariant ones. There is a notion of left-covariant connection accordingly.

\begin{definition} Let $(\Omega^1,\extd)$ be a left-covariant differential calculus over a Hopf algebra $H.$
A linear connection $\nabla:\Omega^1\to\Omega^1\tens_H\Omega^1$ is said to be \textit{left-covariant} if $\nabla$ is a left $H$-comodule map.  
A bimodule connection $(\nabla,\sigma)$ is \textit{left-covariant} if $\nabla,\sigma$ are both left $H$-comodule maps. When $(\Omega^1,\extd)$ is bicovariant, a bimodule connection $(\nabla,\sigma)$ is said to be \textit{bicovariant} if $\nabla,\sigma$ are both $H$-bicomodule maps.
\end{definition}

\begin{proposition}\label{lcon}
 Let $(\Omega^1,\extd)$ be a left-covariant differential calculus over  Hopf algebra $H.$ 
\begin{itemize} 
\item[a)] Left-covariant linear connections $\nabla:\Omega^1\to\Omega^1\tens_H\Omega^1$ are in one-to-one correspondence with linear maps $\nabla^L:\Lambda^1\to\Lambda^1\tens\Lambda^1.$ 
In one direction, $\nabla^L$ is the restriction of $\nabla$ to the left covariant part $\Lambda^1.$ In the converse direction, $\nabla$ is determined by $\nabla^L$ via
\begin{equation}\label{nablaL}
\nabla(\eta)=\extd\eta_{(-2)}\tens_H S(\eta_{(-1)})\eta_{(0)}+\eta_{(-2)}\nabla^L(S(\eta_{(-1)})\eta_{(0)}),\quad\forall\,\eta\in\Omega^1;
\end{equation}
\item[b)]  Left-covariant bimodule conections $(\nabla,\sigma)$ are in one-to-one correspondence with pairs $(\nabla^L,\sigma^L)$ where $\nabla^L:\Lambda^1\to\Lambda^1\tens\Lambda^1$ is a linear map and  $\sigma^L:\Lambda^1\tens\Lambda^1\to\Lambda^1\tens\Lambda^1$ is a right $H$-module map (under right $H$-action $\ra:\Lambda\tens H\to \Lambda ^1$) satisfying 
\begin{equation}\label{sigmaL}
\sigma^L(\xi\tens\omega\pi(h))=\omega\pi(h\t)\tens \xi\ra (S(h\o)h\th)+\nabla^L(\xi)\epsilon(h)-\nabla^L(\xi\ra Sh\o)\ra h\t
\end{equation}
for any $\xi\in\Lambda^1,\,h\in H.$
In addition to the correspondence in a), $\sigma^L$ is the restriction of $\sigma$ on $\Lambda^1\tens\Lambda^1$ in one direction. In the converse direction, $\sigma$  is determined by $\sigma^L$ via
\begin{equation}\label{sigma}
\sigma(\xi\tens_H\eta)=\xi\mt\eta\mth\sigma^L((S\xi\mo\xi\z)\ra\eta\mt\tens S\eta\mo\eta\z),\quad\forall\,\xi,\eta\in\Omega^1.
\end{equation}
\item[c)] If $(\Omega^1,\extd)$ is bicovariant ($\Lambda^1$ becomes a right $H$-crossed module), bicovariant  bimodule conections $(\nabla,\sigma)$ correspond to pairs $(\nabla^L,\sigma^L)$ in $b)$ whereas in addition $\nabla^L$ is a $H$-comdule map and $\sigma^L$ is a $H$-crossed module map.
\end{itemize}
\end{proposition}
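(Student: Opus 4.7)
The plan rests on the standard fact that a left-covariant $H$-bimodule $\Omega^1$ is free as a left $H$-module with basis the left-invariant subspace $\Lambda^1$, and correspondingly that the left-invariant part of $\Omega^1\tens_H\Omega^1$ is exactly $\Lambda^1\tens\Lambda^1$. Concretely, the projection $\omega\mapsto S(\omega\mo)\omega\z$ followed by the multiplication $\omega\mt\cdot S(\omega\mo)\omega\z = \omega$ gives the identification $\Omega^1\isom H\tens\Lambda^1$, and analogously at the level of the $\tens_H$-product. All three parts of the proposition will be established by exploiting this decomposition to reduce each piece of data to its action on the left-invariant subspace, and then encoding the remaining compatibilities as constraints on the reduced data.

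For part (a), the restriction $\nabla\mapsto\nabla^L:=\nabla|_{\Lambda^1}$ is well-defined and lands in $\Lambda^1\tens\Lambda^1$ because $\nabla$ is a left comodule map and $\Lambda^1$ is the left-invariant subspace. Conversely, given an arbitrary $\nabla^L$, one defines $\nabla$ by formula (\ref{nablaL}) and verifies it is a left comodule map satisfying the Leibniz rule. The key identity is $\eta\mt\cdot S(\eta\mo)\eta\z = \eta$, which together with the antipode axiom reduces the Leibniz check on $\eta = h\xi$ (for $h\in H, \xi\in\Lambda^1$) to the tautology $\nabla(h\xi) = \extd h\tens_H\xi + h\,\nabla^L(\xi)$.

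For part (b), one first restricts $\sigma^L:=\sigma|_{\Lambda^1\tens\Lambda^1}$, which lands in $\Lambda^1\tens\Lambda^1$ again by left-covariance, and then observes that (\ref{sigma}) inverts the restriction as a left $H$-linear extension. The crucial derivation is (\ref{sigmaL}): applying the bimodule-connection condition $\nabla(\xi h) = \nabla(\xi)h + \sigma(\xi\tens_H\extd h)$ for $\xi\in\Lambda^1$ and $h\in H$, rewriting $\xi h = h\o(\xi\ra h\t)$ and $\extd h = h\o\tens\omega\pi(h\t)$, expanding both sides using the Leibniz rule for $\nabla$ already established in (a), and then isolating the left-invariant part of the equation by applying $S((\cdot)\mo)(\cdot)\z$ on the first tensor factor, yields exactly (\ref{sigmaL}). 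Conversely, assuming $(\nabla^L,\sigma^L)$ with the right $H$-module property and (\ref{sigmaL}), one checks that the bimodule-connection condition for the reconstructed $(\nabla,\sigma)$ holds on arbitrary $\eta\in\Omega^1$ by writing $\eta = h\xi$ (or a sum of such) and using left $H$-linearity of both sides of the equation to reduce to the $\xi\in\Lambda^1$ case, which is exactly (\ref{sigmaL}).

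For part (c), bicovariance adds right-coaction compatibility. Restriction of $\nabla$ to $\Lambda^1$ forces $\nabla^L$ to be a right $H$-comodule map, with $\Lambda^1\tens\Lambda^1$ carrying the diagonal coaction; similarly $\sigma^L$ inherits the right-comodule condition, and combined with the right $H$-module property from part (b) this is precisely the statement that $\sigma^L$ is a morphism in the category of right $H$-crossed modules. The main technical obstacle throughout will be the bookkeeping in deriving (\ref{sigmaL}): one must carefully distinguish between the left coaction $\Delta_L$ and right action of $H$ on $\Omega^1$, track antipode factors arising from $\xi h$ versus $h\xi$, and produce the answer as a formula depending only on $\omega\pi(h)\in\omega\pi(H^+)\subseteq\Lambda^1$, rather than on $h$ itself. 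Once (\ref{sigmaL}) is secured, the remaining checks are formal manipulations with the Hopf algebra axioms and the freeness of $\Omega^1$ over $H$.
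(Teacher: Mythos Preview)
Your proposal is correct and follows essentially the same approach as the paper: both arguments exploit the freeness $\Omega^1\cong H\tens\Lambda^1$ to reduce $\nabla$ and $\sigma$ to their restrictions on $\Lambda^1$, derive the compatibility (\ref{sigmaL}) from the bimodule-connection identity, and reconstruct via (\ref{nablaL}) and (\ref{sigma}). The only differences are organisational: the paper obtains (\ref{sigmaL}) by directly expanding $\sigma^L(\xi\tens\omega\pi(h))=\sigma(\xi Sh\o\tens_H\extd h\t)=\nabla(\xi Sh\o h\t)-\nabla(\xi Sh\o)h\t$ rather than starting from $\nabla(\xi h)$ and projecting, and in the converse direction it also explicitly verifies that the reconstructed $\sigma$ is well-defined over $\tens_H$ and right $H$-linear---checks you should include but which are indeed routine.
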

\proof a) By definition, $\nabla$ being left-covariant means $(\id\tens\nabla)\Delta_L=\Delta\bt_L\nabla:\Omega^1\to H\tens\Omega^1\tens_H\Omega^1,$ i.e. $\xi\mo\tens\xi\z{}^1\tens_H\xi\z{}^2=\xi^1\mo\xi^2\mo\tens \xi^1\z\tens_H\xi^2\z$ for any $\xi\in\Omega^1,$ where we use $\nabla\xi=\xi^1\tens_H\xi^2$ and $\Delta_L(\xi)=\xi\mo\tens\xi\z$ denote the connection and left coaction respectively. Now let $\xi\in\Lambda^1,$ i.e. $\xi\mo\tens\xi\z=1_H\tens\xi,$ then we know $1\tens\nabla^L(\xi)=\Delta\bt_L\nabla^L(\xi),$ this means that $\nabla^L(\xi)\in {}^{\textrm{co}H}(\Omega^1\tens_H\Omega^1)=\Lambda^1\tens\Lambda^1,$ thus $\nabla^L:\Lambda^1\to\Lambda^1\tens\Lambda^1.$
Conversely, given any linear map $\nabla^L:\Lambda^1\to\Lambda^1\tens\Lambda^1,$ we can extend it to a map $\nabla:\Omega^1\to\Omega^1\tens_H\Omega^1$ by the property of a linear connection, namely
\[\nabla(\eta)=\nabla(\eta\mt. S(\eta\mo)\eta\z)=\extd\eta_{(-2)}\tens_H S(\eta_{(-1)})\eta_{(0)}+\eta_{(-2)}\nabla^L(S(\eta_{(-1)})\eta_{(0)})
\] for any $\eta\in\Omega^1.$ One can check immediately that 
$\nabla(h\eta)=\extd h\tens_H\eta+h\nabla\eta$
for any $h\in H,\,\eta\in\Omega^1$ by the property of Hopf algebra. Also, linear connection $\nabla$ defined in this way is left-covariant as
\begin{align*}
\Delta\bt_L\nabla\eta&=\Delta\bt_L\left(\extd\eta\mt\tens_H S(\eta\mo)\eta\z+\eta\mt.\nabla^L(S\eta\mo\eta\z)\right)\\
&=\eta\mth\tens\extd\eta\mt\tens_H S\eta\mo\eta\z+\eta\mth\tens\eta\mt\nabla^L(S\eta\mo\eta\z)\\
&=\eta_{(-1)}\tens\nabla\eta_{(0)},
\end{align*} noting that element $S\eta\mo\eta\z$ is left invariant. It is clear that the correspondence by restriction and (\ref{nablaL}) is one-to-one.

b) If $(\nabla,\sigma)$ is a left-covariant bimodule connection, the statement on $\nabla^L$ is clear due to a). We can focus on data $\sigma$ here. In the restriction direction, it is clear that $\sigma$ restricts to a linear map $\sigma^L:\Lambda^1\tens\Lambda^1\to\Lambda^1\tens\Lambda^1,$ as $\sigma$ is a left $H$-comodule map by definition. Since the right $H$-action $\ra$ on $\Lambda^1$ is defined by $\xi\ra h=Sh\o.\xi.h\t$, it is natural that $\sigma^L$ is a right $H$-module map under $\ra$, as $\sigma$ is a bimodule map.
For any $\xi\in\Lambda^1,\,h\in H,$ noting that $\omega\pi(h)=Sh\o\extd h\t\in\Lambda^1,$ we have 
\begin{align*}
\sigma^L(\xi\tens\omega\pi(h))&=\sigma(\xi\tens Sh\o\extd h\t)=\sigma(\xi Sh\o\tens_H\extd h\t)\\
&=\nabla(\xi Sh\o.h\t)-\nabla(\xi Sh\o).h\t\\
&=\nabla^L(\xi)\epsilon(h)-\left(\extd(Sh\th)\tens_H S^2(h\t)\xi S h\o\right).h\four\\
&{}\qquad-\left(Sh\th\tens\nabla^L((S^2 h\t)\xi Sh\o)\right).h\four\\
&=\nabla^L(\xi)\epsilon(h)-\left(\extd(Sh\t)\tens_H \xi \ra S h\o\right).h\th\\
&{}\qquad-\left(Sh\t\tens\nabla^L(\xi\ra Sh\o)\right).h\th\\
&=\nabla^L(\xi)\epsilon(h)-\extd(Sh\t)h\th\tens_H \xi \ra S h\o\ra h\four\\
&{}\qquad-Sh\t h\th\tens\nabla^L(\xi\ra Sh\o)\ra h\four\\
&=\nabla^L(\xi)\epsilon(h)-\extd(Sh\t)h\th\tens_H \xi \ra S h\o\ra h\four-\nabla^L(\xi\ra Sh\o)\ra h\t\\
&=\omega\pi(h\t)\tens \xi\ra(Sh\o h\th)+\nabla^L(\xi)\epsilon(h)-\nabla^L(\xi\ra Sh\o)\ra h\t
\end{align*}
where by (\ref{nablaL}) the fourth equality and by $\extd(S h\t)h\th=\left(S h\th\tens \omega\pi(Sh\t)\right).h\four=Sh\th.h\four\tens\omega\pi(Sh\t)\ra h_{(5)}=\omega\pi(Sh\t)\ra h\th=\omega\pi(Sh\t h\th)-\epsilon(Sh\t)\omega\pi(h\th)=-\omega\pi(h\t)$ the last equality hold. We obtain (\ref{sigmaL}) as stated.  

Conversely, given a linear map $\nabla^L$ and a right $H$-module map $\sigma^L$ such that (\ref{sigmaL}). We know already $\nabla$ defined by (\ref{nablaL}) is a left-covariant linear connection. Now consider $\sigma:\Omega^1\tens_H\Omega^1\to\Omega^1\tens_H\Omega^1$ defined by (\ref{sigma}), namely
\begin{equation*}
\sigma(\xi\tens_H\eta)=\xi\mt\eta\mth\,\sigma^L((S(\xi\mo)\xi\z)\ra\eta\mt\tens S(\eta\mo)\eta\z),\quad\forall\,\xi,\eta\in\Omega^1.
\end{equation*}
It is easy to see that $\sigma$ is well-defined by checking that $\sigma$ defined above obeys $\sigma(\xi h\tens_H\eta)=\sigma(\xi\tens_H h\eta)$ for any $h\in H$.
The pair $(\nabla,\sigma)$ constructed forms a bimodule connection if 
\begin{equation}\label{check-b}
\sigma(\eta\tens_H\extd h)=\nabla(\eta h)-\nabla(\eta)h
\end{equation}
for any $\eta\in\Omega^1,\,h\in H,$ which we now check. The left-hand side of (\ref{check-b}) is
\begin{align*}
\sigma(\eta\tens_H\extd h)&=\eta\mt h\o\tens\sigma^L\left((S(\eta\mo)\eta\z)\ra h\t\tens S(h\th)\extd h\four\right)\\
&=\eta\mt h\o\tens\sigma^L\left((S(\eta\mo)\eta\z)\ra h\t\tens \omega\pi(h\th)\right)\\
&=\eta\mt h\o\tens \omega\pi(h\t)\tens (S(\eta\mo)\eta\z)\ra h\th+\eta\mt h\o\,\nabla^L((S(\eta\mo)\eta\z)\ra h\t)\\
&{}\qquad-\eta\mt h\o\,\nabla^L(S(\eta\mo)\eta\z)\ra h\t.
\end{align*} 
The right-hand side of (\ref{check-b}) is
\begin{align*}
\nabla(\eta h)-\nabla(\eta) h&=\extd(\eta\mt h\o)\tens_H S(\eta\mo h\t)\eta\z h\th+\eta\mt h\o\nabla^L (Sh\t S\eta\mo\eta\z h\th)\\
&{}\quad -(\extd\eta\mt)h\o\tens_H (S(\eta\mo)\eta\z)\ra h\t-\eta\mt h\o\nabla^L(S(\eta\mo)\eta\z)\ra h\t\\
&=\eta\mt\extd h\o\tens (S\eta\mo\eta\z)\ra h\t+\eta\mt h\o\nabla^L ((S\eta\mo\eta\z)\ra h\t)\\
&{}\quad -\eta\mt h\o\nabla^L(S(\eta\mo)\eta\z)\ra h\t.
\end{align*}
We know two sides of (\ref{check-b}) meets as $\extd h=h\o\tens\omega
\pi(h\t)$ for any $h\in H.$ It is left to check $\sigma$ is indeed a bimodule map. The left $H$-linearity is obvious by the form of (\ref{sigma}). As to the right $H$-linearity, we check 
\begin{align*}
\sigma(\xi\tens_H\eta h)&=\xi\mo\eta\mth h\o\sigma^L\left((S\xi\mo\xi\z)\ra (\eta\mo h\t)\tens S(\eta\mo h\th)\eta\z h\four\right)\\
&=\xi\mt\eta\mth h\o \sigma^L\left(\left((S\xi\mo\xi\z)\ra\eta\mt\tens S\eta\mo\eta\z\right)\ra h\right)\\
&=\xi\mt\eta\mth h\o \sigma^L\left((S\xi\mo\xi\z)\ra\eta\mt\tens S\eta\mo\eta\z\right)\ra h\\
&=\sigma(\xi\tens_H \eta).h
\end{align*}
since $\sigma^L$ is right $H$-module map. This finishes the proof of b).

c) If $(\nabla,\sigma)$ is bicovariant bimodule connection, it is obvious that both $\nabla^L,\sigma^L$ are right $H$-comodule maps as these maps are restriction of $H$-comodule maps $\nabla,\sigma$ in which the right coaction on $\Lambda^1$ is the same right coaction on  $\Omega^1.$ The converse is straightforward, as one can check $\nabla$ defined by $\nabla^L$ via (\ref{nablaL}) (or $\sigma$ defined by $\sigma^L$ via (\ref{sigma})) commutes with right coaction if $\nabla^L$ (or $\sigma^L$) does.
\endproof

Let $(\Omega^1,\extd)$ be a left-covariant differential calculus over Hopf algebra $H.$ If $\Omega^1$ extends to $\Omega^2$ with wedge product $\wedge:\Omega^1\tens_H\Omega^1\to\Omega^2$. We say $\Omega^2$ is \textit{left-covariant} if 1) $\Omega^2$ is a $H$-bimodule with left $H$-coaction $\Delta_L$ being a $H$-bimodule map; 2) $\extd:\Omega^1\to\Omega^2$ is left $H$-comodule map; 3) The wedge product $\wedge$ is a $H$-bimodule left $H$-comodule map. Under these assumptions, we know that if $\nabla$ is a left-covariant connection given by some linear map $\nabla^L$ as in Proposition~\ref{lcon} a), then the associated torsion $T_\nabla$ and curvature $R_\nabla$ must satisfy $T_\nabla(\Lambda^1)\subseteq\Lambda^2$ and $R_\nabla(\Lambda^1)\subseteq\Lambda^2\tens\Lambda^1$, thus are determined by their restrictions 
\begin{gather}
T_\nabla^L:\Lambda^1\to\Lambda^2,\quad T_\nabla^L=\wedge\nabla^L-\delta,\label{ltor}\\
R_\nabla^L:\Lambda^1\to\Lambda^2\tens\Lambda^1,\quad R_\nabla^L=\left(\delta\tens\id-(\wedge\tens\id)(\id\tens\nabla^L)\right)\nabla^L\label{lcur}
\end{gather}
respectively, where $\wedge:\Lambda^1\tens\Lambda^1\to\Lambda^2$ is the restriction of $\wedge$ and $\delta:\Lambda^1\to\Lambda^2$ is the restriction of $\extd:\Omega^1\to\Omega^2.$ Clearly $T_\nabla(a.\xi)=a.T_\nabla^L\xi$ and $R_\nabla(a.\xi)=a.R_\nabla^L(\xi),$ due to both $T_\nabla$ and $R_\nabla$ are left $H$-module map by definition.

 We have the following elementary lemma:
\begin{lemma} Let $(\nabla,\sigma)$ be a left-covariant bimodule connection. For $g=a\xi\tens\eta\in\Omega^1\tens_A\Omega^1,$ $\nabla g=\extd a\tens\xi\tens\eta+a\nabla^L(\xi)\tens\eta+a\sigma^L(\xi\tens\eta^1)\tens\eta^2,$ where $\nabla^L\eta=\eta^1\tens\eta^2.$ When $g$ is a metric, under the above assumptions on $\Omega^2,$ the bimodule connection $(\nabla,\sigma)$ is Levi-Civita if $\wedge\nabla^L=\delta$, $\wedge(\id+\sigma^L)=0$ and $\nabla g=0.$
\end{lemma}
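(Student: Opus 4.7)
The plan is to handle the two assertions in the lemma separately, both by a direct computation using the formula (\ref{m-c}) for extending a bimodule connection to $\Omega^1\tens_A\Omega^1$ and the characterisation of left-covariant bimodule connections in Proposition~\ref{lcon}.

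For the explicit formula for $\nabla g$, first I would apply (\ref{m-c}) to $g=a\xi\tens\eta$ with $\xi,\eta\in\Lambda^1$:
\[\nabla g=\nabla(a\xi)\tens_A\eta+(\sigma\tens\id)(a\xi\tens_A\nabla\eta).\]
The left Leibniz rule gives $\nabla(a\xi)=\extd a\tens_A\xi+a\nabla\xi$, and since $\xi\in\Lambda^1$ we have $\nabla\xi=\nabla^L\xi\in\Lambda^1\tens\Lambda^1$ by Proposition~\ref{lcon}(a); similarly $\nabla\eta=\nabla^L\eta=\eta^1\tens\eta^2$ lies in $\Lambda^1\tens\Lambda^1$. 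Left $A$-linearity of $\sigma$ then lets me pull $a$ through, while Proposition~\ref{lcon}(b) identifies $\sigma$ on $\Lambda^1\tens\Lambda^1$ with $\sigma^L$. Collecting terms yields the displayed formula.

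For the Levi-Civita statement I would verify the three defining properties in turn. Torsion-freeness: $T_\nabla$ is a left $A$-module map, so it is determined by its restriction $T_\nabla^L=\wedge\nabla^L-\delta$ to $\Lambda^1$ (as in (\ref{ltor})); the hypothesis $\wedge\nabla^L=\delta$ forces $T_\nabla^L=0$ and hence $T_\nabla=0$ on all of $\Omega^1$. Torsion compatibility: every element of $\Omega^1\tens_A\Omega^1$ is an $A$-linear combination of elements $\xi\tens\eta$ with $\xi,\eta\in\Lambda^1$ (since $\Omega^1=A\cdot\Lambda^1$), and $\wedge\circ(\id+\sigma)$ is a left $A$-module map; by Proposition~\ref{lcon}(b) it restricts to $\wedge\circ(\id+\sigma^L)$ on $\Lambda^1\tens\Lambda^1$, which vanishes by hypothesis, so $\im(\id+\sigma)\subseteq\ker\wedge$ as required. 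Metric compatibility is $\nabla g=0$, which is the third hypothesis.

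The only step requiring real care is the reduction of the $\Omega^2$-valued bimodule-map conditions to their left-invariant restrictions; the key input is that the hypotheses on $\Omega^2$ make $\wedge$ and $\sigma$ into left $A$-module (respectively bimodule) maps respecting the left coaction, so equalities on $\Lambda^1$ or $\Lambda^1\tens\Lambda^1$ propagate by $A$-linearity. No genuine obstacle arises, and the lemma follows.
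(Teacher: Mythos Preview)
Your proof is correct and is precisely the direct verification the paper has in mind; the lemma is stated in the paper as an ``elementary lemma'' without proof, and your argument---unfolding (\ref{m-c}) with the Leibniz rule and then reducing the torsion-free, torsion-compatible and metric-compatible conditions to their restrictions on $\Lambda^1$ and $\Lambda^1\tens\Lambda^1$ via left $A$-linearity and Proposition~\ref{lcon}---is exactly the intended routine computation.
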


We now apply the above theory to our example. Indeed, the $4$D inner generalised calculus on $A=\mathbf{k}(\Z_2)$ in Section~5.4 is bicovariant with respect to coactions
\begin{gather*}
\Delta_L\alpha_i=\delta_e\tens\alpha_i+\delta_g\tens\beta_i,\quad\Delta_R\alpha_i=\alpha_i\tens\delta_e+\beta_i\tens\delta_g,\\
\Delta_L\beta_i=\delta_e\tens\beta_i+\delta_g\tens\alpha_i,\quad\Delta_R\beta_i=\beta_i\tens\delta_e+\alpha_i\tens\delta_g,
\end{gather*}
and $\{e^{(1)},\,e^{(2)}\}$ is the basis of left-covariant $1$-forms $\Lambda^1$ with $\Delta_R e^{(i)}=e^{(i)}\tens 1$ and $\delta(e^{(i)})=0$ for $i=1,2.$
From Proposition~\ref{lcon},  we know that bicovariant bimodule connections $(\nabla,\sigma)$ correspond to pairs $(\nabla^L,\sigma^L)$ where $\nabla^L,\sigma^L$ are linear maps and must be in form
\begin{gather*}
\nabla^L e^{(i)}=e^{(1)}\tens e^{(i)}-\sigma^L(e^{(i)}\tens e^{(1)}),\quad
\sigma^L(e^{(i)}\tens e^{(j)})=\sum_{k.l=1}^2\sigma^{ij}_{kl}e^{(k)}\tens e^{(l)},
\end{gather*}
for some $\sigma^{ij}_{kl}\in \mathbf{k}$ for all $i,j,k,l\in\{1,2\}.$ It is natural to ask the given metric $g=\sum_{i,j=1}^2\lambda_{ij}e^{(i)}\tens e^{(j)}$ is \textit{invariant}, i.e. left-invariant and right-invariant under the coactions. This means that the given $g$ is constant metric, i.e. $\partial\lambda_{ij}=0,$ in this example. Similar to the analysis in Section~5.4, we know that $(\nabla^L,\sigma^L)$ provides a bicovariant quantum Levi-Civita connection if and only if  the coefficients of $\sigma$ (still denoted by $\mathbf{a}_i,\mathbf{b}_i,\mathbf{c}_i$ for $i=1,2$ as in (\ref{abc})) must be given by some constants $x,y,z,w$ such that 1) $\mathbf{a}_1,\mathbf{b}_1,\mathbf{c}_1$ are given by
\begin{equation}\label{a1b1c1-c}
\mathbf{a}_1=\lambda^{-1}{w\choose x}+{1\choose 0},\quad \mathbf{b}_1=\lambda^{-1}{x\choose y},\quad \mathbf{c}_1=\lambda^{-1}{y\choose z}
\end{equation}
and 2) $\mathbf{a}_2,\mathbf{b}_2,\mathbf{c}_2$ satisfy
\begin{equation}\label{a2b2c2-c}
M\cdot \mathbf{a}_2={\lambda_{11}\choose\lambda_{12}}-N\cdot \mathbf{a}_1,\quad M\cdot \mathbf{b}_2=-N\cdot\mathbf{b}_1,\quad M\cdot\mathbf{c}_2=-N\cdot \mathbf{c}_1,
\end{equation}
where $M=\begin{pmatrix}x, & y \\ y, & z \end{pmatrix}$ and $N=\begin{pmatrix}w, & x \\ x, & y \end{pmatrix}+\lambda^{\mathrm{Tr}}.$

\begin{proposition} If $M$ is invertible, i.e. $xz-y^2\neq 0,$ (\ref{a2b2c2-c}) has a unique solution of $\mathbf{a}_2,\mathbf{b}_2,\mathbf{c}_2:$
\begin{equation*}
\mathbf{a}_2=M^{-1}{\lambda_{11}\choose \lambda_{12}}-M^{-1}N\cdot\mathbf{a}_1,\quad
\mathbf{b}_2=-M^{-1}N\cdot\mathbf{b}_1,\quad \mathbf{c}_2=-M^{-1}N\cdot\mathbf{c}_1.
\end{equation*}
\end{proposition}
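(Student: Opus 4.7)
The proposition is essentially a direct linear algebra observation once one unpacks the statement, so my plan is short. The system (\ref{a2b2c2-c}) consists of three independent matrix equations of the form $M\cdot \mathbf{v} = \mathbf{b}_{\mathbf{v}}$ in the unknowns $\mathbf{v}\in\{\mathbf{a}_2,\mathbf{b}_2,\mathbf{c}_2\}$, where $M$ is the same $2\times 2$ matrix $\begin{pmatrix}x & y\\ y & z\end{pmatrix}$ in each case and only the right-hand side varies. Thus the plan is simply to observe that $\det M = xz - y^2$, so the hypothesis $xz - y^2\neq 0$ is precisely the condition that $M$ is invertible (here over the ground field $k$, since $x,y,z$ are constants in the bicovariant/invariant setting).

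First I would record that under the invertibility of $M$ each of the three equations has a unique solution, namely the one obtained by left-multiplying by $M^{-1}$. Applied to $M\cdot\mathbf{a}_2 = \binom{\lambda_{11}}{\lambda_{12}} - N\cdot\mathbf{a}_1$, this gives
\[\mathbf{a}_2 = M^{-1}\binom{\lambda_{11}}{\lambda_{12}} - M^{-1}N\cdot\mathbf{a}_1,\]
and applied to the remaining two equations it gives $\mathbf{b}_2 = -M^{-1}N\cdot\mathbf{b}_1$ and $\mathbf{c}_2 = -M^{-1}N\cdot\mathbf{c}_1$ exactly as claimed. Nothing beyond existence and uniqueness of solutions to an invertible $2\times 2$ linear system is needed, so there is no substantial obstacle; the work of checking that these $\mathbf{a}_2,\mathbf{b}_2,\mathbf{c}_2$ (together with the already-determined $\mathbf{a}_1,\mathbf{b}_1,\mathbf{c}_1$ from (\ref{a1b1c1-c})) assemble into a genuine bicovariant quantum Levi-Civita bimodule connection was already done in the discussion preceding the proposition.

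If one wished to be slightly more thorough, a final sentence would spell out that the resulting $(\nabla^L,\sigma^L)$ then defines the unique bicovariant Levi-Civita $(\nabla,\sigma)$ via the reconstruction formulas (\ref{nablaL}) and (\ref{sigma}) of Proposition~\ref{lcon}, thereby producing a family of Levi-Civita connections parametrised by the four constants $x,y,z,w$ subject only to $xz - y^2 \neq 0$. The only mildly non-trivial point to verify along the way is that $\mathbf{a}_1,\mathbf{b}_1,\mathbf{c}_1$ in (\ref{a1b1c1-c}) are well-defined (i.e.\ $\lambda$ itself is invertible), but this is guaranteed by the metric nondegeneracy already imposed on $g$, so one need only remark on it in passing.
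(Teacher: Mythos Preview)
Your proposal is correct and matches the paper's treatment: the paper gives no explicit proof at all for this proposition, treating it as an immediate consequence of left-multiplying each equation in (\ref{a2b2c2-c}) by $M^{-1}$, which is exactly what you spell out. Your additional remarks about reconstructing $(\nabla,\sigma)$ via Proposition~\ref{lcon} and the invertibility of $\lambda$ are accurate but go slightly beyond what the paper bothers to say here.
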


The resulting formulae for $\nabla^L$ and $\sigma^L$ as follows:
\begin{align*}
\nabla^L e^{(1)}=&-\left(\lambda^{-1}{}_{11}w+\lambda^{-1}{}_{12}x\right)e^{(1)}\tens e^{(1)}
-\left(\lambda^{-1}{}_{11}x+\lambda^{-1}{}_{12}y\right)(e^{(1)}\tens e^{(2)}+e^{(2)}\tens e^{(1)})\\
{}&-\left(\lambda^{-1}{}_{11}y+\lambda^{-1}{}_{12}z\right)e^{(2)}\tens e^{(2)}\\
\nabla^L e^{(2)}=&-\left(\lambda^{-1}{}_{21}w+\lambda^{-1}{}_{22}x\right)e^{(1)}\tens e^{(1)}
-\left(\lambda^{-1}{}_{21}x+\lambda^{-1}{}_{22}y\right)(e^{(1)}\tens e^{(2)}+e^{(2)}\tens e^{(1)})\\
{}&-\left(\lambda^{-1}{}_{21}y+\lambda^{-1}{}_{22}z\right)e^{(2)}\tens e^{(2)},
\end{align*}
and $\sigma^L(e^{(1)}\tens e^{(1)})=e^{(1)}\tens e^{(1)}-\nabla^L e^{(1)}$ and $\sigma^L(e^{(2)}\tens e^{(1)})=e^{(1)}\tens e^{(2)}-\nabla^L e^{(2)}.$ 
When the given metric is symmetric ($\lambda_{12}=\lambda_{21}$), we get 
\begin{align*}
\sigma^L(e^{(1)}\tens e^{(2)})&=\left(\frac{2xy}{m}+\lambda^{-1}{}_{11}\frac{w(xy-wz)}{m}+\lambda^{-1}{}_{12}(w+\frac{x(xy-wz)}{m})-\lambda^{-1}{}_{22}x\right)e^{(1)}\tens e^{(1)}\\
&{}\quad +\left(-2+\lambda^{-1}{}_{11}\frac{x(xy-wz)}{m}-\lambda^{-1}{}_{12}(x-\frac{y(xy-wz)}{m})-\lambda^{-1}{}_{22}y\right)(e^{(1)}\tens e^{(2)}+e^{(2)}\tens e^{(1)})\\
&{}\quad+e^{(2)}\tens e^{(1)}\\
&{}\quad +\left(\lambda^{-1}{}_{11}\frac{(xy-wz)y}{m}-\lambda^{-1}{}_{12}(y-\frac{(xy-wz)z}{m})-\lambda^{-1}{}_{22}z\right)e^{(2)}\tens e^{(2)},\\
\sigma^L(e^{(2)}\tens e^{(2)})&=-\left(\frac{2x^2-wy}{m}+\lambda^{-1}{}_{11}\frac{w(x^2-wy)}{m}+\lambda^{-1}{}_{12}\frac{x(x^2-wy)}{m}\right)e^{(1)}\tens e^{(1)}\\
&{}\quad -\left(\lambda^{-1}{}_{11}\frac{x(x^2-wy)}{m}+\lambda^{-1}{}_{12}\frac{(x^2-wy)y}{m}
\right) (e^{(1)}\tens e^{(2)}+e^{(2)}\tens e^{(1)})\\
&{}\quad -\left(1+\lambda^{-1}{}_{11}\frac{(x^2-wy)y}{m}+\lambda^{-1}{}_{12}\frac{(x^2-wy)z}{m}\right)e^{(2)}\tens e^{(2)},
\end{align*}
where $m=\det M=xz-y^2.$
The curvature of corresponding Levi-Civita bimodule connection:
		\begin{align*}
			R_\nabla e^{(1)}
			&=\left((\lambda^{-1}{}_{11}y+\lambda^{-1}{}_{12}z)(\lambda^{-1}{}_{21}w+\lambda^{-1}{}_{22}x)-(\lambda^{-1}{}_{11}x+\lambda^{-1}{}_{12}y)(\lambda^{-1}{}_{21}x+\lambda^{-1}{}_{22}y)\right)e^{121}\\
			&+\left((\lambda^{-1}{}_{11}y+\lambda^{-1}{}_{12}z)(-\lambda^{-1}{}_{11}w+\lambda^{-1}{}_{22}y)+(\lambda^{-1}{}_{11}x+\lambda^{-1}{}_{12}y)(\lambda^{-1}{}_{11}x-\lambda^{-1}{}_{22}z)\right)e^{122},\\
			R_\nabla e^{(2)}
			&=\left((\lambda^{-1}{}_{21}x+\lambda^{-1}{}_{22}y)(\lambda^{-1}{}_{11}w-\lambda^{-1}{}_{22}y)-(\lambda^{-1}{}_{21}w+\lambda^{-1}{}_{12}y)(\lambda^{-1}{}_{11}x-\lambda^{-1}{}_{22}z)\right)e^{121}\\
			&+\left((\lambda^{-1}{}_{11}x+\lambda^{-1}{}_{12}y)(\lambda^{-1}{}_{21}x+\lambda^{-1}{}_{22}y)-(\lambda^{-1}{}_{11}y+\lambda^{-1}{}_{12}z)(\lambda^{-1}{}_{21}w+\lambda^{-1}{}_{22}x)\right)e^{122},
		\end{align*}
		where $e^{ijk}$ are shorthands for $e^{(i)}\wedge e^{(j)}\tens e^{(k)}$ for all $i,j,k=1,2.$

The canonical choice  (i.e. $x=1,y=0,z=1,w=0$) of bicovariant quantum Levi-Civita bimodule connection is given as follows.
\begin{align*}
\nabla^L e^{(1)}=&-\lambda^{-1}{}_{12} e^{(1)}\tens e^{(1)}
-\lambda^{-1}{}_{11}(e^{(1)}\tens e^{(2)}+e^{(2)}\tens e^{(1)})-\lambda^{-1}{}_{12}e^{(2)}\tens e^{(2)}\\
\nabla^L e^{(2)}=&-\lambda^{-1}{}_{22}e^{(1)}\tens e^{(1)}
-\lambda^{-1}{}_{21}(e^{(1)}\tens e^{(2)}+e^{(2)}\tens e^{(1)})-\lambda^{-1}{}_{22}e^{(2)}\tens e^{(2)},\\
\sigma^L(e^{(1)}\tens e^{(1)})&=(1+\lambda^{-1}{}_{12})e^{(1)}\tens e^{(1)}
+\lambda^{-1}{}_{11}(e^{(1)}\tens e^{(2)}+e^{(2)}\tens e^{(1)})
+\lambda^{-1}{}_{12}e^{(2)}\tens e^{(2)},\\
\sigma^L(e^{(2)}\tens e^{(1)})&=\lambda^{-1}{}_{22}e^{(1)}\tens e^{(1)}
+\lambda^{-1}{}_{21}(e^{(1)}\tens e^{(2)}+e^{(2)}\tens e^{(1)})+e^{(1)}\tens e^{(2)} +\lambda^{-1}{}_{22}e^{(2)}\tens e^{(2)},\\
\sigma^L(e^{(1)}\tens e^{(2)})&=-\lambda^{-1}{}_{22}e^{(1)}\tens e^{(1)}
-(2+\lambda^{-1}{}_{12})(e^{(1)}\tens e^{(2)}+e^{(2)}\tens e^{(1)})+e^{(2)}\tens e^{(1)}\\
&{}\quad -\lambda^{-1}{}_{22}e^{(2)}\tens e^{(2)},\\
\sigma^L(e^{(2)}\tens e^{(2)})&=-(2+\lambda^{-1}{}_{12})e^{(1)}\tens e^{(1)}-\lambda^{-1}{}_{11}
 (e^{(1)}\tens e^{(2)}+e^{(2)}\tens e^{(1)})\\
&{}\quad -(1+\lambda^{-1}{}_{12})e^{(2)}\tens e^{(2)}.
\end{align*}
The curvature of this canonical bicovariant quantum Levi-Civita bimodule connection is
\begin{align*}
R_{\nabla} e^{(1)}&=\lambda^{-1}{}_{12}(\lambda^{-1}{}_{22}-\lambda^{-1}{}_{11})e^{(1)}\wedge e^{(2)}\tens e^{(1)}+\lambda^{-1}{}_{11}(\lambda^{-1}{}_{11}-\lambda^{-1}{}_{22})e^{(1)}\wedge e^{(2)}\tens e^{(2)}\\
R_{\nabla} e^{(2)}&=\lambda^{-1}{}_{22}(\lambda^{-1}{}_{22}-\lambda^{-1}{}_{11})e^{(1)}\wedge e^{(2)}\tens e^{(1)}+\lambda^{-1}{}_{12}(\lambda^{-1}{}_{11}-\lambda^{-1}{}_{22})e^{(1)}\wedge e^{(2)}\tens e^{(2)}.
\end{align*}
Obviously, this canonical bimodule connection is flat when $\lambda_{11}=\lambda_{22}.$

\begin{remark} For $M$ not being invertible, i.e. $xz=y^2$, the simplest case is $M=0,$ i.e. $x=y=z=0.$ In this case, $\mathbf{a}_1={\lambda^{-1}{}_{11}w+1\choose \lambda^{-1}{}_{21}w},\,\mathbf{b}_1={0\choose 0},\,\mathbf{c}_1={0\choose 0}$. Then (\ref{a2b2c2-c}) (or (\ref{a2b2c200w}) where $\partial\lambda_{ij}=0$) reduces to $w(\lambda^{-1}{}_{11}w+2)=0.$ 

(i) If $w=0,$ we know $\mathbf{a}_1={1\choose 0}$ and thus
\begin{equation*}\label{w0}
\nabla^L=0,\quad\sigma^L(e^{(i)}\tens e^{(1)})=e^{(1)}\tens e^{(i)},
\end{equation*} and $\sigma^L(e^{(i)}\tens e^{(2)})=-\delta_{i1}e^{(1)}\tens e^{(2)}+a^{i2}e^{(1)}\tens e^{(1)}+b^{i2}(e^{(1)}\tens e^{(2)}+e^{(2)}\tens e^{(1)})+c^{i2}e^{(2)}\tens e^{(2)}$
for arbitrary constants $a^{i2},b^{i2},c^{i2}$ ($i=1,2$).
Clearly, $R_\nabla=0$ as $\nabla^L=0$. 

(ii) As $\lambda^{-1}{}_{11}\neq0,$ or $\lambda_{22}\neq 0,$ there is another solution $w=-\frac{2}{\lambda^{-1}{}_{11}}.$ In this case
\begin{gather*}
\nabla^L(e^{(1)})=0,\quad \nabla^L(e^{(2)})=-\frac{2\lambda_{21}}{\lambda_{22}}e^{(1)}\tens e^{(1)},\\
\sigma^L(e^{(1)}\tens e^{(1)})=e^{(1)}\tens e^{(2)},\quad \sigma^L(e^{(2)}\tens e^{(1)})=\frac{2\lambda_{21}}{\lambda_{22}}e^{(1)}\tens e^{(1)}+e^{(1)}\tens e^{(2)}.
\end{gather*} While $\sigma(e^{(i)}\tens e^{(2)})=-\delta_{i1}e^{(1)}\tens e^{(2)}+a^{i2}e^{(1)}\tens e^{(1)}+b^{i2}(e^{(1)}\tens e^{(2)}+e^{(2)}\tens e^{(1)})+c^{i2}e^{(2)}\tens e^{(2)}$
for arbitrary constants $a^{i2},b^{i2},c^{i2}$ ($i=1,2$).

These are some illustrative examples of bicovariant quantum Levi-Civita bimodule connections in this degenerate case. 
\end{remark}

\end{document}